\newtheorem{theorem}{Theorem}[section]
\newtheorem{lemma}[theorem]{Lemma}
\newtheorem{proposition}[theorem]{Proposition}
\newtheorem{corollary}[theorem]{Corollary}
\theoremstyle{definition}
\newtheorem{example}[theorem]{Example}
\newtheorem{remark}[theorem]{Remark}
\newcommand{\id}{{\rm id}}
\newcommand{\End}{\text{End}}
\newcommand{\Irr}{\text{\rm Irr}}
\newcommand{\FPdim}{\text{\rm FPdim}}
\newcommand{\Ext}{\text{\rm Ext}}
\newcommand{\Hom}{\text{Hom}}
\newcommand{\gr}{{\rm gr}}
\newcommand{\Rep}{{\rm Rep}}
\newcommand{\op}{{\text{op}}}
\newcommand{\C}{\mathcal{C}}
\newcommand{\D}{\mathcal{D}}
\newcommand{\E}{\mathcal{E}}
\newcommand{\sVec}{{\rm sVec}}
\renewcommand{\O}{\mathcal{O}}
\newcommand{\g}{\mathfrak{g}}
\newcommand{\ot}{\otimes}
\newcommand{\ben}{\begin{enumerate}}
\newcommand{\een}{\end{enumerate}}
\newcommand{\Rad}{{\text{Rad}}}
\newcommand{\Vect}{\text{Vec}}
\newcommand{\Lie}{{\text{Lie}}}
\newcommand{\Pro}{{\mathrm{Pro}}}
\newcommand{\cC}{{\mathcal C}}
\newcommand{\cA}{{\mathcal A}}
\newcommand{\cP}{\mathcal{P}}
\newcommand{\cV}{\mathcal{V}}
\newcommand{\Vecc}{{\rm Vec}}
\newcommand{\Gr}{{\rm Gr}}
\numberwithin{equation}{section}
\begin{document}

\title[symmetric integral tensor categories with the Chevalley property]{Finite symmetric integral tensor categories with the Chevalley property\\ \vspace{0.5cm}
{\SMALL \rm{With an appendix by Kevin Coulembier and Pavel Etingof }}}

\author{Pavel Etingof}
\address{Department of Mathematics, Massachusetts Institute of Technology,
Cambridge, MA 02139, USA} \email{etingof@math.mit.edu}

\author{Shlomo Gelaki}
\address{Department of Mathematics, Technion-Israel Institute of
Technology, Haifa 32000, Israel} \email{gelaki@math.technion.ac.il}

\date{\today}

\keywords{Symmetric tensor categories, Chevalley property, quasi-Hopf algebras, associators, Sweedler cohomology, finite group schemes}

\begin{abstract}
We prove that every finite symmetric integral tensor category $\mathcal{C}$ with the Chevalley property over an algebraically closed 
field $k$ of characteristic $p>2$ admits a symmetric fiber functor to the category of supervector spaces. This proves Ostrik's conjecture \cite[Conjecture 1.3]{o} in this case. Equivalently, we prove that there exists a unique finite supergroup scheme $\mathcal{G}$ over $k$ and a grouplike element $\epsilon\in k\mathcal{G}$ of order $\le 2$, whose action by conjugation on $\mathcal{G}$ coincides with the parity automorphism of $\mathcal{G}$, 
such that $\mathcal{C}$ is symmetric tensor equivalent to $\Rep(\mathcal{G},\epsilon)$. In particular, when $\mathcal{C}$ is unipotent, the functor lands in $\Vect$, so $\mathcal{C}$ is symmetric tensor equivalent to $\Rep(U)$ for a unique finite unipotent group scheme $U$ over $k$. We apply our result and the results of \cite{g} to classify certain finite dimensional triangular Hopf algebras with the Chevalley property over $k$ (e.g., local), in group scheme-theoretical terms. Finally, we compute the Sweedler cohomology of restricted enveloping algebras  over an algebraically closed field $k$ of characteristic $p>0$, classify associators for their duals, and study finite dimensional (not necessarily triangular) local quasi-Hopf algebras and finite (not necessarily symmetric) unipotent tensor categories over an algebraically closed field $k$ of characteristic $p>0$. 

The appendix by K. Coulembier and P. Etingof gives another proof of the above classification results using the recent paper \cite{Co}, and, more generally, shows that the maximal Tannakian and super-Tannakian subcategory of a symmetric tensor category over a field of characteristic $\ne 2$ is always a Serre subcategory. 
\end{abstract}

\maketitle

\section{Introduction}

This paper is motivated by the problem of classifying finite symmetric tensor categories $\mathcal{C}$ over an algebraically closed field $k$ of characteristic $p>0$. This problem was recently solved in the semisimple case by Ostrik \cite{o}, who proved that any symmetric fusion category $\mathcal{C}$ over $k$ admits a symmetric fiber functor to the Verlinde category $\text{Ver}_p$. In other words, Ostrik proved that $\mathcal{C}$ is symmetric tensor equivalent to the category $\Rep_0(G)$ of a unique finite group scheme $G$ in $\text{Ver}_p$ with a homomorphism $\pi_1(\mathcal{C})\to G$ such that the adjoint action of $\pi_1(\mathcal{C})$ on $\mathcal{O}(G)$ is canonical, where $\Rep_0(G)$ is the symmetric tensor category of representations of $G$ whose pullback to $\pi_1(\mathcal{C})$ is canonical.

The most natural family of non-semisimple finite symmetric 
tensor categories $\mathcal{C}$ over $k$ to start with is the
one consisting of such categories $\mathcal{C}$ in which the
Frobenius-Perron dimensions of objects are integers (i.e., $\mathcal{C}$ is {\em integral}). This is, in a sense, the easiest case, since by a result of \cite{eo}, such a category is the representation category of a finite dimensional triangular quasi-Hopf algebra over $k$. One is thus led naturally to the problem of classifying finite-dimensional triangular quasi-Hopf algebras over $k$. Like for Hopf algebras, the easiest non-semisimple finite dimensional triangular quasi-Hopf algebras to understand are those which have the {\em Chevalley property}. Recall that a quasi-Hopf algebra $H$ has the Chevalley property if the tensor product of every two simple $H$-modules is semisimple. For example, every basic (e.g., local) quasi-Hopf algebra has the Chevalley property. The objective of this paper is to classify finite dimensional triangular quasi-Hopf algebras over $k$ with the Chevalley property.

It is shown in \cite[Proposition 2.17]{eo} that a finite
dimensional local quasi-Hopf algebra over $\mathbb{C}$ is $1$-dimensional. On the other hand, there do exist non-trivial finite dimensional triangular local quasi-Hopf algebras over a field $k$ of characteristic $p>0$. For example, if $G$ is a finite unipotent group scheme over $k$ then its group algebra $kG$, equipped with the $R$-matrix $1\ot 1$, is a finite dimensional triangular local cocommutative Hopf algebra over $k$.

In categorical terms, the objective of this paper is to classify finite symmetric integral tensor categories $\mathcal{C}$ over $k$ which have the {\em Chevalley property}. Recall that a tensor category $\C$ has the Chevalley property if the tensor product of every two simple objects of $\C$ is semisimple. Namely, we prove the following theorem.

\begin{theorem}\label{classchptr}
Every finite symmetric integral tensor category $\mathcal{C}$ with the Chevalley property over an algebraically closed 
field $k$ of characteristic $p>2$ admits a symmetric fiber functor to the category $\text{sVec}$ of supervector spaces. Thus, there exists a unique finite supergroup scheme $\mathcal{G}$ over $k$ and a grouplike element $\epsilon\in k\mathcal{G}$ of order $\le 2$, whose action by conjugation on $\mathcal{G}$ coincides with the parity automorphism of $\mathcal{G}$, such that $\mathcal{C}$ is symmetric tensor equivalent to $\Rep(\mathcal{G},\epsilon)$.
\end{theorem}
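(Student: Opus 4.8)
The plan is to first establish the statement for the maximal semisimple subcategory of $\C$ and then to propagate the fiber functor to all of $\C$ by a de-equivariantization followed by a deformation-theoretic lift along the radical filtration. Let $\C_0\subseteq\C$ be the full tensor subcategory generated by the simple objects; the Chevalley property makes $\C_0$ a symmetric fusion category, and it is integral because $\C$ is. By \cite{eo} I may write $\C=\Rep(H)$ for a finite dimensional triangular quasi-Hopf algebra $H$, with $\C_0=\Rep(H/\Rad H)$, so that producing a symmetric fiber functor $F\colon\C\to\sVec$ is the same as gauging $H$ into an (ordinary) supergroup algebra.

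I would first dispose of $\C_0$. Ostrik's theorem \cite{o} provides a symmetric fiber functor $\C_0\to\mathrm{Ver}_p$, and since $\C_0$ is integral this functor necessarily lands in the integral part $\sVec\subseteq\mathrm{Ver}_p$ (the only simple objects of $\mathrm{Ver}_p$ of integer dimension being the unit and the odd line). Thus $\C_0\simeq\Rep(G,\epsilon)$ for a finite group $G$ with $p\nmid|G|$ and a central element $\epsilon$ of order $\le2$; in particular the maximal Tannakian subcategory $\Rep(G)\subseteq\C_0$ is honest.

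Next I would reduce to the unipotent case. De-equivariantizing $\C$ by $\Rep(G)$ yields a symmetric tensor category $\C_G$ with a $G$-action and $\C=(\C_G)^{G}$, whose semisimple part is only $\sVec$; since $p>2$ the $\BZ/2$ super grading decouples from the unipotent directions, and the essential problem becomes the case of a \emph{unipotent} finite symmetric tensor category $\mathcal{U}$ with the Chevalley property, i.e. $\mathcal{U}=\Rep(H)$ with $H$ local, whose semisimple part is $\Vect$. Taking the associated graded for the radical filtration, $\gr\mathcal{U}\simeq\Rep(u(\g))$ for a restricted Lie algebra $\g$, which patently admits a fiber functor to $\Vect$; the real task is to lift this functor to $\mathcal{U}$, equivalently to trivialize by a gauge transformation the quasi-Hopf associator of $H$.

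The main obstacle is precisely this lift, and it is cohomological in nature. Deforming the monoidal structure of the functor order by order along the radical filtration, the successive obstructions live in a Sweedler--Harrison type cohomology of the function algebra of the graded unipotent group scheme, namely in the Sweedler cohomology of the restricted enveloping algebra $u(\g)$. The crux of the proof is therefore to compute this cohomology and to show that, for $p>2$, the obstruction class together with the ambiguity coming from the associator can always be annihilated, so that an inductive argument produces $F\colon\mathcal{U}\to\Vect$. Equivariantizing back by $G$ and reinstating $\epsilon$ then yields the desired symmetric fiber functor $\C\to\sVec$. The uniqueness of $(\mathcal{G},\epsilon)$ is finally formal: Tannakian reconstruction gives $\mathcal{G}=\underline{\Aut}^{\otimes}(F)$, and since the fiber functor to $\sVec$ is unique up to the parity automorphism, both $\mathcal{G}$ and $\epsilon$ are determined.
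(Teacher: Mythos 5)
There is a genuine gap at the declared crux of your argument: the claim that ``the obstruction class together with the ambiguity coming from the associator can always be annihilated'' for $p>2$ is false as stated, because the relevant cohomology does \emph{not} vanish. The lowest-degree part $\phi$ of $\Phi-1$ is a normalized Hochschild $3$-cocycle of $\mathcal{O}(\mathcal{G})$, and $H^3(\mathcal{O}(\mathcal{G}),k)$ (equivalently, by the truncated exponential, the multiplicative/Sweedler $H^3$) is typically nonzero: the paper computes it explicitly (Proposition \ref{kunneth}, Corollary \ref{thirdhochcoh}), and Theorem \ref{assocalphap} exhibits a nontrivial associator on $k\alpha_p$, so $\Rep(\alpha_p,\Phi)$ is a finite unipotent integral tensor category with the Chevalley property admitting \emph{no} fiber functor to $\Vect$ (Corollary \ref{cor3}). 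Your induction, which invokes only a cohomology computation and never extracts constraints from the braiding, would therefore ``prove'' that every finite unipotent tensor category is $\Rep$ of a group scheme --- contradicting Corollary \ref{cor3}. What actually kills the obstruction in the paper is the triangular structure used at every step: one first gauges the $R$-matrix into the canonical form $R_u$ with $u$ grouplike, $u^2=1$ (Proposition \ref{killingR}, via lifting idempotents in $k[\mathbb{Z}/2\mathbb{Z}]\ltimes H^{\ot 2}$, a step your proposal omits entirely), and then the hexagon axioms \eqref{phi2}--\eqref{phi3} and the quantum Yang--Baxter equation \eqref{qybe} force the linear conditions ${\rm Alt}(\phi)=0$ and $\phi_{123}+\phi_{321}=0$ (Lemma \ref{qtalt}); comparing these against the explicit basis of the even part of $H^3(\mathcal{O}(\mathcal{G}),k)$ shows $\phi=df$ is a coboundary (Proposition \ref{killingPhi}), and $p>2$ is then used to symmetrize $f$ so that the pseudotwist $1+\widetilde f$ preserves the symmetric structure. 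Without identifying this mechanism, your lifting step has no proof and its unqualified form is wrong.

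Two secondary inaccuracies: first, an integral symmetric fusion category over $k$ is $\Rep(G,\epsilon)$ for a finite \emph{group scheme} $G$, which by Nagata's theorem has the form $\Gamma\ltimes P^D$ with $P$ an abelian $p$-group (Remark \ref{epsilon=1}); it need not be a constant group of order prime to $p$ (e.g.\ $\Rep(\mu_p)$ is integral symmetric fusion), so your de-equivariantization by ``the'' honest Tannakian part $\Rep(G)$ starts from a false description of $\C_0$ and would at best need to handle the diagonalizable factor $P^D$ separately. Second, the paper does not de-equivariantize at all: it works directly with the radical filtration of the triangular quasi-Hopf algebra, identifies $\gr(H)$ with $k\mathcal{G}$ for a supergroup scheme $\mathcal{G}$ after a degree-zero pseudotwist (Corollary \ref{gr}), and trivializes $R$ and $\Phi$ by pseudotwists degree by degree; your reduction to a purely unipotent category, even if repairable, is an unproved detour rather than the route taken, and in any case it does not circumvent the missing triangularity input described above.
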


\begin{remark}
\begin{enumerate}
\item
The proof of Theorem \ref{classchptr} uses \cite[Theorem 1.1]{o} and \cite[Theorem 8.1]{eov}, and the method we use in the proof is motivated by Drinfeld's paper \cite[Propositions 3.5, 3.6]{d}.
\item
Theorem \ref{classchptr} implies Ostrik's conjecture \cite[Conjecture 1.3]{o} for finite symmetric integral tensor categories $\mathcal{C}$ with the Chevalley property over an algebraically closed field $k$ of characteristic $p>2$. Namely, such categories $\mathcal{C}$ admit a symmetric fiber functor to $\text{sVec}\subset \text{Ver}_p$. However, there even exist fusion symmetric tensor categories over $k$ which are not integral, so in particular do not admit a symmetric fiber functor to $\text{sVec}$ (unlike in characteristic $0$ \cite{eg4}).
\end{enumerate}
\end{remark}

If $\mathcal{C}$ is {\em unipotent} (i.e., the unit object of $\mathcal{C}$ is its unique simple object) then it is integral and has the Chevalley property. Thus, specializing to unipotent symmetric tensor categories, we have the following result.
  
\begin{corollary}\label{classuniptr}
Every finite symmetric unipotent tensor category $\mathcal{C}$ over an algebraically closed field $k$ of characteristic $p>2$ admits a symmetric fiber functor to $\Vect$. Thus, there exists a unique finite unipotent group scheme $U$ over $k$ such that $\mathcal{C}$ is symmetric tensor equivalent to $\Rep(U)$.
\end{corollary}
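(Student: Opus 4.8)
The plan is to deduce Corollary \ref{classuniptr} from Theorem \ref{classchptr} by showing that in the unipotent case the fiber functor cannot detect any odd part. First I would note, as already observed in the text, that a unipotent $\mathcal{C}$ is automatically integral (its only simple object $\be$ has $\FPdim=1$) and has the Chevalley property (since $\be\ot\be=\be$ is simple), so Theorem \ref{classchptr} applies and furnishes a symmetric fiber functor $F\colon\mathcal{C}\to\sVec$ together with a supergroup scheme $\mathcal{G}$ and grouplike $\epsilon$ with $\mathcal{C}\simeq\Rep(\mathcal{G},\epsilon)$.

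The key step is to verify that $F$ in fact lands in $\Vect\subset\sVec$. Since $F$ is a symmetric tensor functor it is exact and sends the unit object to the even line $\be_{\sVec}=k^{1|0}$. Because $\mathcal{C}$ is unipotent, every object $X$ admits a finite filtration all of whose successive quotients are isomorphic to $\be$; applying the exact functor $F$ produces a filtration of $F(X)$ whose successive quotients are all isomorphic to $k^{1|0}$. As $\sVec$ is semisimple this filtration splits, forcing $F(X)\cong (k^{1|0})^{\oplus n}$ to be a purely even supervector space. Hence $F(X)\in\Vect$ for every $X$, and $F$ is a symmetric fiber functor $\mathcal{C}\to\Vect$.

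Given such a functor, I would invoke Tannakian reconstruction: $U:=\Aut^{\otimes}(F)$ is a finite affine group scheme over $k$ (finite since $\mathcal{C}$ is finite), and $F$ induces a symmetric tensor equivalence $\mathcal{C}\simeq\Rep(U)$. It then remains to see that $U$ is unipotent. But $\Rep(U)\simeq\mathcal{C}$ is unipotent, i.e. the trivial representation is the unique simple $U$-module, so by the standard characterization of unipotent finite group schemes (those whose only irreducible representation is trivial) $U$ is unipotent. Uniqueness of $U$ follows either from the uniqueness clause of Theorem \ref{classchptr} --- here $\mathcal{G}$ is purely even with $\epsilon=1$, so $\mathcal{G}=U$ --- or directly from the Tannakian formalism.

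The bulk of the work is of course contained in Theorem \ref{classchptr}, which is assumed. Granting it, the only genuinely new ingredient is the descent from $\sVec$ to $\Vect$, and the main point to be careful about there is the interplay of exactness of $F$ with the semisimplicity of $\sVec$: it is precisely the absence of any nontrivial simple object in $\mathcal{C}$ that prevents an odd line from appearing in the image, and this is exactly what fails for general integral $\mathcal{C}$ with the Chevalley property.
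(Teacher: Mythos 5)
Your proof is correct, but it takes a genuinely different route from the paper. The paper does not deduce Corollary \ref{classuniptr} from the statement of Theorem \ref{classchptr}; it obtains it by specializing the \emph{proof}: a unipotent $\mathcal{C}$ is $\Rep(H,R,\Phi)$ for a \emph{local} triangular quasi-Hopf algebra $H$, so $H[0]=k$, the semisimple part is trivial, and hence $G$ is trivial and $\epsilon=1$ from the outset (Remark \ref{epsilon=1}); then $u=1$, the $R$-matrix is trivialized to $1\ot 1$, and the inductive trivialization of $\Phi$ lands directly on a cocommutative local Hopf algebra $kU$ (Theorem \ref{localp>2}, which the paper notes is equivalent to the corollary). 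The payoff of that route, stated explicitly in the paper's remark, is that the corollary does not require \cite[Theorem 1.1]{o} or \cite[Theorem 8.1]{eov}, which enter the proof of Theorem \ref{classchptr} only through the identification of the semisimple part $H[0]$ with $(kG,R_\epsilon,1)$. Your derivation instead uses Theorem \ref{classchptr} as a black box and supplies the one missing step: since a symmetric fiber functor is exact, every object of a unipotent $\mathcal{C}$ has a Jordan--H\"older filtration with all factors $\mathbf{1}$, so $F(X)$ is an iterated extension of $k^{1|0}$ in the semisimple category $\sVec$ and is therefore purely even --- a clean argument not spelled out in the paper --- after which Tannakian reconstruction and the characterization of unipotent finite group schemes by the triviality of their unique simple module finish the job. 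This is fully valid, but it inherits the dependence on the heavy semisimple ingredients that the paper's specialized proof deliberately avoids. Both of your uniqueness arguments work: $(U,1)$ is an admissible pair in the uniqueness clause of Theorem \ref{classchptr} (evenness of all $F(X)$ forces $\mathcal{G}$ purely even and $\epsilon=1$), and alternatively, over algebraically closed $k$ any two fiber functors on $\Rep(U)$ are isomorphic because $U$-torsors over $\mathrm{Spec}\,k$ have a rational point and are hence trivial.
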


\begin{remark}
\begin{enumerate}
\item
Corollary \ref{classuniptr} gives another proof of \cite[Proposition 3.6]{eho} for finite dimensional Hopf algebras.
\item
Unlike the proof of Theorem \ref{classchptr}, the proof of Corollary \ref{classuniptr} does not require \cite[Theorem 1.1]{o} and \cite[Theorem 8.1]{eov}.
\end{enumerate}
\end{remark}

The organization of the paper is as follows. In Section 2 we recall some necessary stuff about quasi-Hopf algebras, finite tensor categories, finite (super)group schemes, and cohomology. In particular, we compute the cohomology ring of the coordinate algebra of a finite connected group scheme in a functorial form, generalizing a result from \cite{fn}. Section 3 is devoted to the proof of Theorem \ref{classchptr} and Corollary \ref{classuniptr}. In particular, in Theorem \ref{symmwithphi} we classify 
finite dimensional triangular quasi-Hopf algebras with the Chevalley property over an algebraically closed field $k$ of characteristic $p>2$ (up to pseudotwist equivalence). Section 4 is devoted to the classification of certain finite dimensional triangular Hopf algebras with the Chevalley property (e.g., local) in characteristic $p>2$, in group scheme-theoretical terms (see Theorem \ref{classtrhas}). In Section 5 we compute the Sweedler cohomology groups \cite{s2} $H^{i}_{\text{Sw}}(u(\g))$, $i\ne 2$, of restricted enveloping algebras $u(\g)$ over an algebraically closed field $k$ of characteristic $p>0$ (see Theorem \ref{ncocy} and Corollary \ref{ncocymup}), and classify associators for their duals $u(\g)^*$ (see Corollary \ref{assocncocy} and Theorem \ref{assocalphap}). Finally, in Section 6 we consider finite dimensional (not necessarily triangular) local quasi-Hopf algebras and finite (not necessarily symmetric) unipotent tensor categories over an algebraically closed field $k$ of characteristic $p>0$ (see Theorems \ref{main2} and \ref{uniptc}).

The paper also contains an appendix by K. Coulembier and P. Etingof. This appendix gives another proof of Theorem \ref{classchptr} and Corollary \ref{classuniptr} using the recent paper \cite{Co}, and, more generally, shows that the maximal Tannakian and super-Tannakian subcategory of a symmetric tensor category over a field of characteristic $\ne 2$ is always a Serre subcategory. 

\begin{remark}
In a separate publication we plan to do the following:
\begin{enumerate}
\item 
Discuss the classification of finite symmetric integral tensor categories with the Chevalley property over an algebraically closed field of characteristic $2$.
\item
Compute the Sweedler cohomology group $H^{2}_{\text{Sw}}(u(\g))$ of restricted enveloping algebras $u(\g)$ over an algebraically closed field $k$ of characteristic $p>0$.
\item
Classify finite dimensional triangular Hopf algebras with the Chevalley property over an algebraically closed field $k$ of characteristic $p>0$.
\end{enumerate}
\end{remark}

{\bf Acknowledgments.} P. E. was supported by grant DMS-1502244. We are grateful to V. Ostrik for useful discussions. S. G. is grateful to the University of Michigan and MIT for their warm hospitality.

\section{preliminaries}

All constructions in this paper are done over an algebraically closed field $k$ of characteristic $p>0$.

\subsection{Quasi-Hopf algebras} Recall \cite{d}
that a {\em quasi-Hopf algebra} over $k$ is an algebra $H$ equipped with a
comultiplication $\Delta$ which is associative up to conjugation
by an invertible element $\Phi\in H^{\ot 3}$ (called the {\em
associator}), a counit $\varepsilon$, and an antipode $S$ together
with two distinguished elements $\alpha,\beta\in H$, satisfying
certain axioms. In particular, we have
\begin{equation}\label{phi1}
(\id\ot \id\ot \Delta)(\Phi)(\Delta\ot \id\ot \id)(\Phi) = (1\ot \Phi)(\id\ot \Delta\ot \id)(\Phi)(\Phi\ot 1),
\end{equation}

\begin{equation}\label{phieps}
(\varepsilon\ot \id\ot \id)(\Phi) = (\id\ot \varepsilon\ot \id)(\Phi)=(\id\ot \id\ot \varepsilon)(\Phi)=1
\end{equation} 
and
\begin{equation}\label{phicomm}
\Phi(\Delta\ot \id)(\Delta(h))\Phi^{-1}=(\id\ot \Delta)(\Delta(h)),\,\,h\in H.
\end{equation}

A {\em triangular} quasi-Hopf algebra is a quasi-Hopf algebra $H$ equipped with an invertible element $R\in H^{\ot 2}$, satisfying 
\begin{equation}\label{cocommR}
\Delta^{cop}(\cdot)=R\Delta(\cdot)R^{-1},
\end{equation}
\begin{equation}\label{triangR}
R^{-1}=R_{21}, 
\end{equation}
\begin{equation}\label{phi2}
(\Delta \ot \id)(R)=\Phi_{312}R_{13}\Phi_{132}^{-1}R_{23}\Phi_{123}
\end{equation}
and
\begin{equation}\label{phi3}
(\id\ot \Delta )(R)=\Phi_{231}^{-1}R_{13}\Phi_{213}R_{12}\Phi_{123}^{-1}.
\end{equation}
It is known that the pair $(R,\Phi)$ satisfies the following quantum Yang-Baxter equation:
\begin{equation}\label{qybe}
R_{12}\Phi_{312}R_{13}\Phi_{132}^{-1}R_{23}\Phi_{123}=
\Phi_{321}R_{23}\Phi_{231}^{-1}R_{13}\Phi_{213}R_{12}.
\end{equation}

An invertible element $J\in H\ot
H$ satisfying $(\varepsilon\ot \id)(J)=(\id\ot\varepsilon)(J)=1$ is
called a {\em pseudotwist} for $H$. Using a pseudotwist $J$ for $H$, one can
define a new quasi-Hopf algebra structure
$H^J=(H,\Delta^J,\varepsilon,\Phi^J,S^J,\beta^J,\alpha^J,1)$ on the
algebra $H$. In particular, 
\begin{equation}\label{phi4}
\Delta^J(h)=J^{-1}\Delta(h)J,\;h\in H,
\end{equation}
and 
\begin{equation}\label{phi5}
\Phi^J:=(1\ot J^{-1})(\id\ot \Delta)(J^{-1})\Phi(\Delta\ot
\id)(J)(J\ot 1).
\end{equation}
Moreover, if $H$ has a triangular $R$-matrix $R$ then 
\begin{equation}\label{twistedR}
R^J:=J_{21}^{-1}RJ 
\end{equation}
is a triangular $R$-matrix for $H^J$.
 
We will denote the {\em pseudotwist equivalence class} of $\Phi$ by $[\Phi]$.  
We will also say that two
quasi-Hopf algebras $H$ and $K$ are {\em pseudotwist equivalent} if $K$
and $H^J$ are isomorphic as quasi-Hopf algebras for some pseudotwist $J$
for $H$. In this situation $\Rep(H)$ and $\Rep(K)$ are
tensor equivalent. If $H$ is pseudotwist equivalent to a Hopf algebra,
we say that the associator of $H$ is {\em trivial}.

A {\em twist} for a Hopf algebra $H$ is a pseudotwist $J$ satisfying
\begin{equation}\label{twist}
(\id\ot \Delta)(J)(1\ot J)=(\Delta\ot \id)(J)(J\ot 1).
\end{equation}

\subsection{Finite tensor categories} Recall that a finite rigid tensor category ${\mathcal C}$ over
$k$ is a rigid tensor category over $k$ with
finitely many simple objects and enough projectives such that the
unit object ${\bf 1}$ is simple (see \cite{egno}). Let
$\Irr({\mathcal C})$ denote the set of isomorphism
classes of simple objects of ${\mathcal C}$. Then to each object
$X\in {\mathcal C}$ there is attached a non-negative number
$\FPdim(X)$, called the Frobenius-Perron (FP) dimension of $X$
(namely, it is the largest non-negative eigenvalue of the operator of
multiplication by $X$ in the Grothendieck ring of ${\mathcal C}$),
and the Frobenius-Perron (FP) dimension of ${\mathcal C}$ is
defined by
$$\FPdim({\mathcal C}):=\sum_{X\in \Irr({\mathcal C})}
\FPdim(X)\FPdim (P(X)),$$ where $P(X)$ denotes the projective cover
of $X$ (see \cite{egno}). For example, if ${\mathcal C}\cong \Rep(H)$, $H$ a finite dimensional quasi-Hopf algebra, then
$\FPdim(X)=\dim(X)$ and $\FPdim({\mathcal C})=\dim(H)$; in
particular the FP-dimensions of objects are integers. Moreover, by
\cite[Proposition 2.6 ]{eo}, $\mathcal{C}$ is equivalent to
$\Rep(H)$, $H$ a finite dimensional quasi-Hopf algebra, if and
only if $\mathcal{C}$ is integral (i.e., the FP-dimensions of its objects are integers).

\subsection{Finite dimensional algebras}\label{sta} Let $H$ be a finite dimensional algebra over $k$,
and let $I:=\Rad(H)$ be its Jacobson radical. Then we have
a filtration on $H$ by powers of $I$, so one can consider the
associated graded algebra $\gr(H)=\bigoplus_{r\ge 0} H[r]$,
$H[r]:=I^r/I^{r+1}$ ($I^0:=H$). By a standard result (see, e.g., \cite[Lemma 2.2]{eg}), $\gr(H)$ is generated by $H[0]$ and $H[1]$. If moreover $H$ is a quasi-Hopf algebra and $I$ is a quasi-Hopf ideal of $H$ (i.e., $H$ has the Chevalley property), then the associated graded algebra $\gr(H)$ has a natural structure of a graded  quasi-Hopf algebra.

We will need the following lemma.

\begin{lemma}\label{sqroot}
Let $H$ be a finite dimensional algebra over $k$, and let $I:=\Rad(H)$. Then the following hold:
\begin{enumerate}
\item
If $e\in H/I$ is an idempotent, then $e$ has a unique lift to an idempotent $\widetilde{e}\in H$ (up to conjugacy by elements of $1+I$). 
\item
Assume $p>2$. 
Then for every $h\in I$, there exists a unique element $a\in 1+I$ such that $a^2=1+h$.
\end{enumerate}
\end{lemma}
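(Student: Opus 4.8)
The plan is to prove the two parts separately, using the standard lifting-of-idempotents machinery for part (1) and a convergent power-series (Newton-type) construction for part (2).

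\medskip

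For part (1), the claim that idempotents lift uniquely up to conjugacy along the nilpotent ideal $I$ is a classical result, so I would either cite it or reproduce the short argument. To lift, I would take any preimage $x\in H$ of $e\in H/I$, so that $x^2-x\in I$. Since $I$ is nilpotent, say $I^N=0$, one builds an idempotent by a Newton-type iteration: setting $\widetilde{e}$ as a polynomial in $x$ with no constant term (e.g. iterating $x\mapsto 3x^2-2x^3$, which sends an approximate idempotent modulo $I^r$ to one modulo $I^{2r}$) produces, after finitely many steps, an honest idempotent $\widetilde{e}\in H$ reducing to $e$. For uniqueness up to conjugacy, if $\widetilde{e}_1,\widetilde{e}_2$ are two lifts, one sets $u:=\widetilde{e}_1\widetilde{e}_2+(1-\widetilde{e}_1)(1-\widetilde{e}_2)$, checks $u\in 1+I$ (hence invertible), and verifies $u\widetilde{e}_2=\widetilde{e}_1 u$, giving the conjugacy. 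The assumption $p>2$ is not needed here.

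\medskip

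For part (2), the goal is to extract a square root of $1+h$ inside $1+I$ when $p>2$. The natural approach is the formal binomial series $a=(1+h)^{1/2}=\sum_{n\ge 0}\binom{1/2}{n}h^n$. Because $h\in I$ and $I$ is nilpotent, this sum is actually finite, so it converges trivially to a well-defined element of $1+I$; the hypothesis $p>2$ guarantees that $2$ is invertible in $k$, so the binomial coefficients $\binom{1/2}{n}$ make sense in $k$. That $a^2=1+h$ follows from the formal identity $(1+t)^{1/2}(1+t)^{1/2}=1+t$ among power series over $\mathbb{Z}[1/2]$, specialized at $t=h$. Alternatively, and perhaps more transparently, I would construct $a$ by successive approximation: start with $a_0=1$, and given $a_r\in 1+I$ with $a_r^2\equiv 1+h\pmod{I^{r+1}}$, correct it to $a_{r+1}=a_r(1+\tfrac12 c_r)$ where $c_r:=a_r^{-2}(1+h)-1\in I^{r+1}$, so that $a_{r+1}^2\equiv 1+h\pmod{I^{2r+2}}$; the nilpotence of $I$ terminates the process.

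\medskip

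For uniqueness in part (2), suppose $a,b\in 1+I$ both satisfy $a^2=b^2=1+h$. Then $(a-b)(a+b)=a^2-b^2=0$ (using that $a,b$ both lie in the commutative subalgebra $k[h]$, so they commute). Now $a+b\in 2+I$, and since $2$ is invertible in $k$ (here is where $p>2$ is essential) and $I$ is nilpotent, $a+b$ is invertible in $H$; multiplying $(a-b)(a+b)=0$ on the right by $(a+b)^{-1}$ yields $a=b$. The main subtlety to be careful about is ensuring all the relevant elements commute so that factorizations like $a^2-b^2=(a-b)(a+b)$ are valid; this is automatic because everything is built inside the commutative algebra $k[h]\subseteq H$. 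I expect the only genuine obstacle to be bookkeeping in the recursive construction, which is routine given the nilpotence of $I$.
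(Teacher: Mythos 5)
Part (1) of your proposal is correct and standard (the paper simply cites a textbook for it), and your two existence constructions in part (2) are both sound in substance; the paper's own construction is your binomial series $a=\sum_{n\ge 0}\binom{1/2}{n}h^n$. One presentational caveat there: your justification that the coefficients ``make sense in $k$'' because $2$ is invertible is too quick, since for $n\ge p$ the denominator $n!$ is divisible by $p$; what is really needed is that $\binom{1/2}{n}$ is $p$-integral (equivalently, lies in $\mathbb{Z}[1/2]$), which is exactly the point the paper verifies by counting powers of $p$ in numerator and denominator. Your Newton-type iteration gives existence independently of this, so existence is not in danger.

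The genuine gap is in your uniqueness argument for part (2). You factor $a^2-b^2=(a-b)(a+b)$ on the grounds that ``$a,b$ both lie in the commutative subalgebra $k[h]$,'' and you assert this is automatic ``because everything is built inside $k[h]$.'' But the lemma asserts uniqueness among \emph{all} $a\in 1+I$ with $a^2=1+h$, and an arbitrary competitor $a$ is not built by your construction, so there is no a priori reason it lies in $k[h]$; moreover the centralizer of $h$ need not be commutative, so the factorization fails for two arbitrary roots. As written, you have only proved uniqueness within $k[h]$, which is strictly weaker than the statement. The repair is one observation: any root $a$ commutes with its own square $a^2=1+h$, hence with $h$, hence with every element of $k[h]$ --- in particular with your constructed root $b\in k[h]$. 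Then $(a-b)(a+b)=0$ is legitimate, and $a+b\in 2+I$ is invertible since $p>2$ and $I$ is nilpotent, giving $a=b$. The paper sidesteps commutativity altogether by an induction on powers of $I$: if $a'=a+b$ with $b\in I^n$ and $(a+b)^2=a^2$, then $ab+ba+b^2=0$, so $2b\equiv 0 \pmod{I^{n+1}}$ and hence $b\in I^{n+1}$; iterating gives $b=0$. Either fix is short, but without one of them your proof of uniqueness does not establish the lemma.
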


\begin{proof}
(1) See, e.g., \cite[Proposition 9.1.1]{irt}.

(2) Since $h$ is nilpotent, $h^{i}=0$ for some $i$. Also, for every integer $i$, the number of times $p$ appears in the numerator of $${{1/2}\choose{i}}=\frac{\frac{1}{2}(\frac{1}{2}-1)\cdots (\frac{1}{2}-i+1)}{i!}$$ equals at least the number of times it appears in the denominator. Hence, we see that $a:=\sum_{i\ge 0}{{1/2}\choose{i}}h^i$ is well defined, and it is clear that $a^2=1+h$. 
\footnote{Alternatively, let $\frac{1}{2}=a_0+a_1p+a_2p^2+\cdots$ be the $p$-adic representation of the $p$-adic integer $1/2$. Then $a=(1+h)^{a_0}(1+h^p)^{a_1}(1+h^{p^2})^{a_2}\cdots$.}

The uniqueness of $a$ follows by an easy induction. Namely, suppose $a$ is unique modulo $I^n$ and we have two solutions $a,a'$ modulo $I^{n+1}$. Then $a'=a+b$ for some $b\in I^n$ and $(a+b)^2=a^2$, so $ab+ba+b^2=0$. Hence, $2b=0$ modulo $I^{n+1}$, so $b=0$ modulo $I^{n+1}$, i.e., $a'=a$ modulo $I^{n+1}$, as desired.
\end{proof}

For every $h\in I$, the unique element $a$ from Lemma \ref{sqroot} is denoted by $(1+h)^{1/2}$.

\subsection{Finite group schemes}\label{fgsc} An affine group scheme $G$ over $k$ is called \emph{finite} if its representing Hopf algebra (= coordinate Hopf algebra) 
$\mathcal{O}(G)$ is finite dimensional. In this case,
$kG:=\mathcal{O}(G)^*$ is a finite dimensional cocommutative Hopf
algebra, which is called the \emph{group algebra} of $G$. A finite group scheme $G$ is called
\emph{constant} if its representing Hopf algebra $\mathcal{O}(G)$ is the Hopf algebra of functions on some finite
abstract group with values in $k$, and is called \emph{\'{e}tale} if $\mathcal{O}(G)$ is semisimple. Since $k$ is algebraically closed, it is known that $G$ is \'{e}tale if and only if it is a constant group scheme \cite[6.4]{w}. 

Let $G$ be a finite \emph{commutative} group scheme over $k$, i.e.,
$\mathcal{O}(G)$ is a finite dimensional commutative and
cocommutative Hopf algebra. In this case, $kG$ is also a finite dimensional commutative and cocommutative Hopf algebra, so it represents a finite commutative group scheme $G^D$ over $k$, which is called the \emph{Cartier dual} of $G$. For example, the Cartier dual of the group scheme $\alpha_p:=\mathbb{G}_{a,1}$ (= the Frobenius kernel of the additive group $\mathbb{G}_a$) is
$\alpha_p$, while the Cartier dual of $\mu_p:=\mathbb{G}_{m,1}$ (= the Frobenius kernel of the multiplicative group $\mathbb{G}_m$) is $\mathbb{Z}/p\mathbb{Z}$.

A finite group scheme $G$ is called \emph{connected} if $\mathcal{O}(G)$ is a local Hopf algebra. Equivalently, $G$ is connected if $\Rep(\mathcal{O}(G))$ is a finite unipotent tensor category over $k$.

A finite group scheme $U$ is called \emph{unipotent} if $\mathcal{O}(U)$ is a connected Hopf algebra (i.e., $\mathcal{O}(U)$ has a unique grouplike element). Equivalently, $U$ is unipotent if its group algebra $kU=\mathcal{O}(U)^*$ is a local cocommutative Hopf algebra. Thus, $\Rep(U)$ is a finite unipotent tensor category over $k$. Every finite \emph{commutative} unipotent group scheme $U$ over $k$ decomposes into a direct product $U=U^{\circ} \times P$, where $U^{\circ}$ is a connected commutative unipotent group scheme (i.e., the identity component of $U$) and $P$ is a constant $p$-group (see, e.g., \cite[6.8, p.52]{w}).

\subsection{Finite supergroup schemes}\label{fsgsc} Assume $p>2$. Recall that a finite {\em supergroup scheme} $\mathcal{G}$ over $k$ is a finite group scheme in the category $\text{sVec}$ of supervector spaces. Let $\mathcal{O}(\mathcal{G})$ be the coordinate Hopf superalgebra of $\mathcal{G}$ (it is supercommutative), and let $\mathcal{G}_0$ be the even part of $\mathcal{G}$. Then $\mathcal{G}_0$ is an ordinary finite group scheme over $k$. Let ${\bf \g}:=\Lie(\mathcal{G})$ be the Lie superalgebra of $\mathcal{G}$, and write ${\bf \g}={\bf \g}_0\oplus {\bf \g}_1$. We have ${\bf \g}_0=\Lie(\mathcal{G}_0)$ is the Lie algebra of $\mathcal{G}_0$. It is known that $\mathcal{O}(\mathcal{G})=\mathcal{O}(\mathcal{G}_0)\otimes \wedge {\bf \g}_1^*$ as superalgebras (see, e.g., \cite[Section 6.1]{ma}).

Let $\mathcal{G}$ be a finite supergroup scheme over $k$, let $\epsilon\in \mathcal{G}(k)$ be a grouplike element of $k\mathcal{G}$ of order $\le 2$ acting on $\mathcal{O}(\mathcal{G})$ by the parity automorphism, and let $$R_{\epsilon}:=\frac{1}{2}(1\otimes 1+1\otimes \epsilon + \epsilon \otimes 1 - \epsilon\otimes\epsilon).$$ Recall that $\Rep(\mathcal{G},\epsilon)$ is the category of representations of $\mathcal{G}$ on finite dimensional supervector spaces on which $\epsilon$ acts by parity, equipped with its standard tensor structure and the symmetric structure determined by $R_{\epsilon}$.

\subsection{Cohomology of products of $\alpha_{p,r}$'s}\label{cohom} 
Let $\alpha_{p,r}$ be the {\em $r$th Frobenius kernel of $\mathbb{G}_a$}. Then $k[x]/(x^{p^{r}})$, where $x$ is a primitive element, is the coordinate Hopf algebra of $\alpha_{p,r}$.

Let $G:=\prod_{i=1}^n\alpha_{p,r_i}$ and let $A:=\mathcal{O}(G)=k[x_1,\dots,x_n]/(x_1^{p^{r_1}},\dots,x_n^{p^{r_n}})$ be its coordinate Hopf algebra. Then $\mathfrak{a}:=H^1(G,k)=P(A)$ is the $(\sum_{i=1}^n r_i)$-dimensional vector space of primitive elements in $A$ with basis $\left\{x_j^{p^{i}}\mid 1\le j\le n,\,\,0\le i\le r_j-1\right\}$.
Let $$\xi(f):=\sum_{i=1}^{p-1}\frac{1}{i}\binom{p-1}{i-1}f^i\ot f^{p-i},\,\,\,f\in A.$$
Then $\xi$ defines a twisted\footnote{Namely, $\xi(af)=a^pf$ for every $a\in k$ and $f\in A$.} linear injective homomorphism $$\overline{\xi}:H^1(G,k)\to H^2(G,k)$$ 
(see, e.g., \cite[II, \textsection 3, 4.6]{dg}). Let $\mathfrak{a}^{(1)}$ be the Frobenius twist of $\mathfrak{a}$. Identifying $\mathfrak{a}^{(1)}$ with $\text{Im}(\overline{\xi})$, and writing $S'\mathfrak{a}^{(1)}$ for $S\mathfrak{a}^{(1)}$, where we put every element of $S^i\mathfrak{a}^{(1)}$ in degree $2i$, we have
$$H^{\bullet}(G,k)=\wedge \mathfrak{a} \ot S' \mathfrak{a}^{(1)},\,\,\,p>2,$$
and $$H^{\bullet}(G,k)=S \mathfrak{a},\,\,\,p=2.$$
(For more details see, e.g., \cite[Chapter 4]{j}.)

In particular, we have the following proposition. 
\begin{proposition}\label{h3233}
\begin{equation}\label{h33}
H^3(G,k)\cong \wedge^3 \mathfrak{a} \oplus (\mathfrak{a}\ot \mathfrak{a}^{(1)}),\,\,\,p>2,
\end{equation}
and 
\begin{equation}\label{h32}
H^3(G,k)\cong S^3 \mathfrak{a},\,\,\,p=2.
\end{equation}
\qed
\end{proposition}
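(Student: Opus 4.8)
The proposition is essentially a formal consequence of the cohomology ring formula stated just above it, so the plan is to extract the degree-3 component from the stated graded ring structure by bookkeeping on degrees. Let me set up the two cases.

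\medskip

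\textbf{The plan.} The statement records the explicit value of $H^\bullet(G,k)$ as a bigraded algebra in the two cases $p>2$ and $p=2$, so the proof amounts to reading off the weight-$3$ (i.e. cohomological degree $3$) piece of each. First I would recall the degree conventions: in the formula $H^\bullet(G,k)=\wedge\mathfrak{a}\ot S'\mathfrak{a}^{(1)}$ for $p>2$, the exterior generators $\mathfrak{a}=H^1(G,k)$ sit in degree $1$, while $S'\mathfrak{a}^{(1)}$ places every element of $S^i\mathfrak{a}^{(1)}$ in degree $2i$; thus $\mathfrak{a}^{(1)}=\Image(\overline\xi)$ lives in degree $2$. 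Then a monomial $\omega\ot s$ with $\omega\in\wedge^a\mathfrak{a}$ and $s\in S^b\mathfrak{a}^{(1)}$ has total degree $a+2b$, so the degree-$3$ part is spanned by those with $a+2b=3$.

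\medskip

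\textbf{Case $p>2$.} The equation $a+2b=3$ with $a,b\ge 0$ has exactly the solutions $(a,b)=(3,0)$ and $(a,b)=(1,1)$. The first contributes $\wedge^3\mathfrak{a}\ot S^0\mathfrak{a}^{(1)}=\wedge^3\mathfrak{a}$, and the second contributes $\wedge^1\mathfrak{a}\ot S^1\mathfrak{a}^{(1)}=\mathfrak{a}\ot\mathfrak{a}^{(1)}$. Since the tensor-product decomposition of the cohomology ring splits $H^3$ as the direct sum of the graded pieces of the two tensor factors in complementary degrees, summing these two contributions gives exactly
\[
H^3(G,k)\cong \wedge^3\mathfrak{a}\oplus(\mathfrak{a}\ot\mathfrak{a}^{(1)}),
\]
which is \eqref{h33}.

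\medskip

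\textbf{Case $p=2$.} Here $H^\bullet(G,k)=S\mathfrak{a}$ with the generators $\mathfrak{a}$ in degree $1$, so $H^i(G,k)=S^i\mathfrak{a}$ and in particular $H^3(G,k)\cong S^3\mathfrak{a}$, which is \eqref{h32}. I do not anticipate a genuine obstacle: the only point requiring care is the degree convention for the symmetric factor $S'\mathfrak{a}^{(1)}$ (the doubling of degrees, which forces the Frobenius-twisted classes into degree $2$), and making sure the grading on a tensor product of graded algebras is taken additively so that the degree-$3$ slice is the sum over complementary bidegrees. Once these conventions are pinned down, both isomorphisms follow immediately by inspection, so the proof is a short direct computation rather than an argument with a hard step.
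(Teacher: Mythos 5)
Your proposal is correct and is essentially the paper's own argument: the proposition is stated there with no separate proof precisely because it follows by extracting the degree-$3$ component of the ring isomorphisms $H^{\bullet}(G,k)=\wedge \mathfrak{a}\otimes S'\mathfrak{a}^{(1)}$ for $p>2$ (with $S^i\mathfrak{a}^{(1)}$ placed in degree $2i$) and $H^{\bullet}(G,k)=S\mathfrak{a}$ for $p=2$, recorded just before it in Subsection 2.6. Your degree bookkeeping, including the two solutions $(a,b)=(3,0)$ and $(1,1)$ of $a+2b=3$, is exactly the intended computation.
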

We will use Proposition \ref{h3233} in Example \ref{assocalphap} below. 

\subsection{Cohomology of products of $\alpha_{p,r}^D$'s}\label{cohomd}
Let $A:=\mathcal{O}(\alpha_{p,r})=k[x]/(x^{p^{r}})$ be the coordinate Hopf algebra of $\alpha_{p,r}$. Let $$\{x^{i*}\mid 0\le i\le p^{r}-1\}$$ be the basis of $A^*=\mathcal{O}(\alpha_{p,r}^D)$ dual to the basis $\{x^i\mid 0\le i\le p^{r}-1\}$ of $A$. We have $A^*=
\bigotimes_{i=0}^{r-1}\left(k[x^{p^i*}]/(x^{p^i*})^p\right)$ as an algebra, with comultiplication map determined by 
\begin{equation}\label{comultp}
\Delta(x^{i*})=\sum_{j=0}^i x^{j*}\ot x^{(i-j)*}
\end{equation} 
for every $0\le i\le p^{r}-1$. In particular, the group $\mathbf{G}(A^*)$ of grouplike elements of $A^*$ is $1^*$, and $\mathfrak{b}:=P(A^*)$ is spanned by $x^*$.

Set
$$\beta:=\sum_{j=1}^{p^r-1}x^{j*}\ot x^{(p^r-j)*}\,\,\,\text{and}\,\,\,\gamma:=x^*\ot \beta.$$ 
It is straightforward to verify that
$\beta$ and $\gamma$ are Hochschild algebra $2$-cocycle and $3$-cocycle of $A$, respectively. Also, since $\beta\in A^*\ot A^*$ has degree $p^r$ it follows that $\beta$, and hence also $\gamma$, are non-trivial cocycles.

Let $H^{\bullet}(A,k)$ be the Hochschild cohomology of the algebra $A$ with coefficients in the trivial $A$-bimodule $k$. Note that by definition, we have $H^{\bullet}(A,k)=H^{\bullet}(\alpha_{p,r}^D,k)$. It is known that $H^1(A,k)=\text{Der}_k(A,k)$ 
(see, e.g., \cite[Lemma 9.2.1]{wi}), hence $H^1(A,k)=\mathfrak{b}$ is $1$-dimensional. It is also known that $H^{2i+1}(A,k)\cong H^1(A,k)$ and $H^{2i+2}(A,k)\cong H^2(A,k)$ for every $i\ge 0$. This can be seen by 
writing a $2$-periodic resolution of $k$ by free $A$-modules of rank $1$, with even differentials being multiplication by $x$ and odd ones by $x^{p^r-1}$ (see, e.g., \cite[Exsercise 9.1.4]{wi}). In particular, $H^2(A,k)$ and $H^3(A,k)$ are $1$-dimensional, spanned by $[\beta]$ and $[\gamma]$, respectively. (See also \cite[Remark 3.6]{fn}.) Furthermore, 
we have the following result (for a proof see, e.g., \cite[Proposition 3.5]{fn}).
\newpage

\begin{proposition}\label{hochcoh}
The following hold: 
\begin{enumerate}
\item 
If $p>2$, or $p=2$ and $r>1$, then $H^{\bullet}(\mathcal{O}(\alpha_{p,r}),k)=H^{\bullet}(\alpha_{p,r}^D,k)$ is a free supercommutative algebra generated by $[x^*]$ of degree $1$ (so, $[x^*]^2=0$) and $[\beta]$ of degree $2$. In particular, $H^{3}(\mathcal{O}(\alpha_{p,r}),k)=H^3(\alpha_{p,r}^D,k)$ is spanned by $[\gamma]=[\beta][x^*]$. 
\item
If $p=2$ then $H^{\bullet}(\mathcal{O}(\alpha_{p}),k)=H^{\bullet}(\alpha_{p}^D,k)$ is a free commutative algebra generated by $[x^*]$ of degree $1$. In this case, we have $[\beta]=[x^*]^2\ne 0$ and $[\gamma]=[\beta][x^*]=[x^*]^3$.
 \hspace{7cm} \qed
\end{enumerate}
\end{proposition}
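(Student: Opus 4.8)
The plan is to reduce the statement to a computation of the ring structure on the Hochschild cohomology $H^{\bullet}(A,k)=H^{\bullet}(\alpha_{p,r}^D,k)$, where $A=\mathcal{O}(\alpha_{p,r})=k[x]/(x^n)$ with $n:=p^r$. The additive structure is already in hand from the discussion preceding the statement: every $H^i(A,k)$ is $1$-dimensional, with $x^*$ spanning $H^1$, $[\beta]$ spanning $H^2$, and $[\gamma]$ spanning $H^3$. Thus the only thing left is to compute the cup products and to show the resulting algebra is free on the asserted generators. I would work in the normalized Hochschild cochain complex $C^{\bullet}(A,k)=\Hom_k(\bar A^{\ot\bullet},k)$ with trivial coefficients, where the boundary contributions $\varepsilon(a_1)f(\cdots)$ and $\varepsilon(a_{n+1})f(\cdots)$ drop out on $\bar A$, so that $\delta f(a_1,\dots,a_{m+1})=\sum_{i=1}^m(-1)^i f(\dots,a_ia_{i+1},\dots)$; the cup product is the external one $(f\smile g)(a_1,\dots)=f(a_1,\dots)\,g(\dots)$ induced by $k\ot k\to k$.

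The decisive computation is the cup square of the degree-one class. Directly, $x^*\smile x^*=x^*\ot x^*$ and $x^*\smile\beta=x^*\ot\beta=\gamma$, so in any case $[\gamma]=[x^*][\beta]$, and since $[\beta]$ is even this equals $[\beta][x^*]$. A one-line evaluation on basis elements $x^i,x^j$ (with $i,j\ge 1$) gives $\delta(x^{2*})=-\,x^*\ot x^*$. Hence $[x^*]^2=0$ exactly when $x^{2*}$ exists, i.e. when $x^2\neq 0$ in $A$, i.e. when $n=p^r>2$ (the case $p>2$, or $p=2$ and $r>1$). When $p=2$ and $r=1$ we have $n=2$, so $x^{2*}=0$, and moreover $\beta=x^*\ot x^*$ as a cochain on the nose, whence $[\beta]=[x^*]^2\neq 0$. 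This single computation produces the entire case distinction in the statement.

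Finally I would upgrade this to the full ring structure using periodicity. Writing $A^e=A\ot A^{\op}\cong k[x,y]/(x^n,y^n)$, I would record the standard $2$-periodic free resolution of $A$,
\begin{equation*}
\cdots \xrightarrow{\,\cdot w\,} A^e \xrightarrow{\,\cdot(x-y)\,} A^e \xrightarrow{\,\cdot w\,} A^e \xrightarrow{\,\cdot(x-y)\,} A^e \xrightarrow{\,m\,} A \to 0,
\end{equation*}
with $w=\sum_{i=0}^{n-1}x^iy^{n-1-i}$, whose exactness rests on $(x-y)w=x^n-y^n=0$ and $\mathrm{Ann}(x-y)=(w)$, $\mathrm{Ann}(w)=(x-y)$ in $A^e$. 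Periodicity of this resolution yields a class $\theta\in H^2(A,A)$ whose cup action is the periodicity isomorphism $H^i(A,k)\xrightarrow{\sim}H^{i+2}(A,k)$ for all $i$; since this action agrees with cup product by the image $\bar\theta$ of $\theta$ under $A\to k$, and $H^2(A,k)=k[\beta]$ is $1$-dimensional, $\bar\theta$ is a nonzero multiple of $[\beta]$. Therefore $\smile[\beta]$ is an isomorphism in every degree, so $[\beta]^m$ spans $H^{2m}$ and $[x^*][\beta]^m$ spans $H^{2m+1}$. Combined with the cup-square computation, this identifies $H^{\bullet}(A,k)$ with the free supercommutative algebra $\wedge[x^*]\ot k[\beta]$ (so $[x^*]^2=0$) when $p^r>2$, and with the free commutative algebra $k[x^*]$ (with $[\beta]=[x^*]^2$) when $p=2,r=1$, which is exactly the assertion.

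I expect the main obstacle to be this last step: the given data only provide additive periodicity, and passing from ``$H^i$ is $1$-dimensional with a distinguished generator $[\beta]$ in degree $2$'' to ``$[\beta]$ acts invertibly, hence is a polynomial generator'' requires realizing the periodicity isomorphism by cup product with (the image of) an explicit $\Ext^2$-class and matching it with $[\beta]$. The cup-square $\delta(x^{2*})=-x^*\ot x^*$ is elementary but is the computation that genuinely separates the two characteristic cases; everything else is bookkeeping once the module-over-$H^{\bullet}(A,A)$ structure is set up.
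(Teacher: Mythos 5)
Your proposal is correct, but it follows a genuinely different (and far more self-contained) route than the paper's, for the simple reason that the paper does not prove Proposition \ref{hochcoh} at all: it quotes it from \cite[Proposition 3.5]{fn}, and the surrounding text supplies only the additive input --- the $2$-periodic resolution of $k$ by free rank-one $A$-modules with differentials $x$ and $x^{p^r-1}$, the one-dimensionality of each $H^i(A,k)$, and the nontriviality of $[\beta]$ and $[\gamma]$ via the degree-$p^r$ observation. What you add is precisely the multiplicative content, and your two key steps check out. First, the normalized-cochain identity $\delta(x^{2*})=-\,x^*\otimes x^*$ (valid exactly when $x^2\ne 0$ in $A$, i.e. $p^r>2$, with the sign immaterial when $p=2$) gives $[x^*]^2=0$ in case (1), while for $p^r=2$ the on-the-nose cochain equality $\beta=x^*\otimes x^*$ gives $[\beta]=[x^*]^2\ne 0$ in case (2); this single computation does cleanly produce the whole case distinction, including the $p=2$, $r>1$ subcase of (1). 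Second, your invertibility of $\smile[\beta]$ via the $2$-periodic $A^e$-resolution with differentials $x-y$ and $w=\sum_{i=0}^{n-1}x^iy^{n-1-i}$ is the standard hypersurface periodicity argument; note that you resolve $A$ over $A^e$ where the paper resolves $k$ over $A$, which keeps you honestly inside the Hochschild framework in which the proposition is stated. This step rests on classical identifications you invoke but do not prove --- that cup product agrees with Yoneda composition, that the $H^{\bullet}(A,A)$-action on $H^{\bullet}(A,k)$ is cup product with the image under the augmentation-induced ring map (immediate at cochain level), and the annihilator computations $\mathrm{Ann}_{A^e}(x-y)=(w)$, $\mathrm{Ann}_{A^e}(w)=(x-y)$ --- all of which are standard and correct, and you rightly flag this as the delicate point. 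With $\smile[\beta]$ invertible, the dimension count (each $H^i$ one-dimensional) forces the map from the free (super)commutative algebra to be an isomorphism, and the cochain-level identity $\gamma=x^*\smile\beta$ gives $[\gamma]=[x^*][\beta]=[\beta][x^*]$. In short: your approach buys a complete proof where the paper buys brevity by citation; an alternative shortening of your last step would be to lift the periodicity chain map from the periodic resolution to the bar resolution explicitly, but your $\Ext^2$-class formulation is equivalent and clean.
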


\subsection{Cohomology of coordinate algebras of finite connected group schemes.}
Next, let $G$ be any finite {\em connected} group scheme over $k$. Then we have an {\em algebra} isomorphism
\begin{equation}\label{conncohomiso}
\mathcal{O}(G)=k[x_1,\dots,x_n]/\left(x_1^{p^{r_1}},\dots,x_n^{p^{r_n}}\right)
\end{equation}
(see, e.g, \cite[14.4, p. 112]{w}). Thus, by the K\"{u}nneth formula, we have an isomorphism of {\em graded commutative algebras} 
\begin{equation}\label{conncohom}
H^{\bullet}(\mathcal{O}(G),k)\cong \bigotimes _{i=1}^n H^{\bullet}\left(k[x_i]/\left(x_i^{p^{r_i}}\right),k\right).
\end{equation}

We would now like to give a functorial (i.e., coordinate-independent) formulation of this statement. 
Namely, we have the following generalization of \cite[Proposition 3.5]{fn}.

Let $\g$ be the Lie algebra of $G$. Thus any $z\in \g$ defines a derivation of $\mathcal O(G)$ into its augmentation module. Let $\mathfrak{m}$ be the maximal ideal of $\mathcal O(G)$ and $r$ be the smallest integer such that $\mathfrak{m}^{p^r}=0$.
Let $J_i$ be the ideal of $f\in \mathcal{O}(G)$ such that $f^{p^{i}}=0$. Thus $\mathfrak{m}=J_r\supset J_{r-1}\supset\cdots\supset J_0=0$. Let $F^i\g$ be the Lie subalgebra of $z\in \g$ such that $z(J_i)=0$. Thus $\g=F^0\g\supset F^1\g\supset\cdots\supset F^r\g=0$. 
Let $\gr(\g)_i:=F^{i-1}\g/F^i\g$, $1\le i\le r$. 

Set $\widetilde{\g}:=((J_1/J_1\mathfrak m)^*)^{(1)}$, where the superscript here and below denotes the Frobenius twist. Note that $V_0:=J_1/J_1\mathfrak m$ has a 
descending filtration by subspaces 
$$
V_i:=(J_{i+1}/(J_{i+1}\mathfrak{m}+J_i))^{p^{i-1}},
$$
 i.e., $V_0\supset V_1\supset\cdots\supset V_r=0$. Thus $\widetilde \g$ has an ascending filtration 
 $$
 0=W_0\subset W_1\subset\cdots\subset W_r=\widetilde{\g},
 $$ 
 where $W_i:=(V_i^\perp)^{(1)}$. 
 Note that $W_i/W_{i-1}={\rm gr}(\g)_i^{(i)}$. So ${\rm gr}(\widetilde{\g})=\bigoplus_{i=1}^r {\rm gr}(\g)_i^{(i)}$. 
 Thus $\widetilde\g$ has the same composition factors as $\g$, except they are Frobenius twisted and occur in opposite order. 

Finally, let $\phi: \g\to \widetilde{\g}$ be the twisted linear map given by the composition 
$\g\twoheadrightarrow {\rm gr}(\g)_1={\rm gr}(\g)_1^{(1)}\hookrightarrow \widetilde{\g}$. 

\begin{proposition}\label{kunneth}
The following hold: 
\begin{enumerate}
\item
If $p>2$ then we have a canonical isomorphism of graded algebras 
$$
H^{\bullet}(\mathcal{O}(G),k)\cong\wedge \g\otimes S'(\widetilde{\g}).
$$ 
\item
If $p=2$ then we have a canonical isomorphism of graded algebras 
$$
H^{\bullet}(\mathcal{O}(G),k)\cong S\g\otimes S'(\widetilde\g)/(z^2-\phi(z), z\in \g).  
$$  
\end{enumerate} 
\end{proposition}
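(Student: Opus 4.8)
The plan is to reduce everything to the one-variable truncated polynomial factors by the K\"unneth formula, and then to replace the resulting coordinate-dependent presentation by the intrinsic one. Concretely, I would fix an algebra isomorphism $\mathcal{O}(G)\cong\bigotimes_{i=1}^n k[x_i]/(x_i^{p^{r_i}})$ as in \eqref{conncohomiso} and feed it into \eqref{conncohom}, so that $H^{\bullet}(\mathcal{O}(G),k)\cong\bigotimes_{i=1}^n H^{\bullet}(k[x_i]/(x_i^{p^{r_i}}),k)$. Each tensor factor is given by Proposition \ref{hochcoh}: for $p>2$ it is free graded-commutative on a class $[x_i^*]$ in degree $1$ and a class $[\beta_i]$ in degree $2$, with $[x_i^*]^2=0$; for $p=2$ the factors with $r_i=1$ are polynomial on $[x_i^*]$ with $[\beta_i]=[x_i^*]^2$, whereas the factors with $r_i>1$ again have $[x_i^*]^2=0$ and an independent generator $[\beta_i]$. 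Hence, non-canonically, $H^{\bullet}(\mathcal{O}(G),k)$ is free graded-commutative on $n$ generators in degree $1$ and $n$ in degree $2$, subject only to the indicated squaring relations, and the content of the proposition is to describe the two generating spaces and the squaring map without reference to coordinates.

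In degree $1$ this is immediate: by the citation in \textsection\ref{cohomd} we have $H^1(\mathcal{O}(G),k)=\text{Der}_k(\mathcal{O}(G),k)$, and a derivation of $\mathcal{O}(G)$ into its augmentation module is precisely a tangent vector at the identity, so $H^1(\mathcal{O}(G),k)=\g$ canonically, with $\partial_{x_i}$ corresponding to $[x_i^*]$. This produces the factor $\wedge\g$ when $p>2$, and the degree-$1$ part of $S\g$ when $p=2$.

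The crux is the canonical identification of the degree-$2$ generators with $\widetilde{\g}$. I would exploit that $\mathcal{O}(G)$ is a local complete intersection: write $\mathcal{O}(G)=P/\mathfrak{q}$ with $P$ regular local with maximal ideal $\mathfrak{m}_P$ and $\mathfrak{q}$ generated by a regular sequence, so that the relation space $\mathfrak{q}/\mathfrak{m}_P\mathfrak{q}$ is an $n$-dimensional invariant of $\mathcal{O}(G)$. Deformation theory gives a canonical map $(\mathfrak{q}/\mathfrak{m}_P\mathfrak{q})^*\to H^2(\mathcal{O}(G),k)$ sending a functional on the relations to the class of the corresponding square-zero deformation, and its image is exactly the span of the $[\beta_i]$ (dual to the relations $x_i^{p^{r_i}}$). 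It remains to match the source with $\widetilde{\g}$: the Frobenius map $f\mapsto f^p$ induces a $p$-semilinear map $J_1/J_1\mathfrak{m}\to\mathfrak{q}/\mathfrak{m}_P\mathfrak{q}$ (well defined since $f\in J_1$ lifts to $\tilde f\in P$ with $\tilde f^{p}\in\mathfrak{q}$, and carrying $J_1\mathfrak{m}$ into $\mathfrak{m}_P\mathfrak{q}$), which is an isomorphism sending $x_i^{p^{r_i-1}}$ to $x_i^{p^{r_i}}$; therefore $\mathfrak{q}/\mathfrak{m}_P\mathfrak{q}\cong(J_1/J_1\mathfrak{m})^{(1)}$, and dualizing (Frobenius twist commuting with duality) gives $\widetilde{\g}=((J_1/J_1\mathfrak{m})^*)^{(1)}\cong(\mathfrak{q}/\mathfrak{m}_P\mathfrak{q})^*$. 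Composing yields a canonical linear map $\widetilde{\g}\to H^2(\mathcal{O}(G),k)$. To confirm compatibility with the filtration in the statement I would check on associated graded that this isomorphism is filtered for the $J_i$-filtration, recovering $W_i/W_{i-1}=\gr(\g)_i^{(i)}$ from $\gr(\g)_i=\span\{\partial_{x_j}:r_j=i\}$ and from the fact that $x_j^{p^{r_j-1}}$ is detected at Frobenius level $r_j$; in particular $[\beta_j]$ with $r_j=i$ sits in filtration level $i$. Getting this degree-$2$ identification right, with the correct single Frobenius twist, duality, and filtration bookkeeping, is the main obstacle.

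Finally I would assemble the canonical graded-algebra map $\wedge\g\otimes S'\widetilde{\g}\to H^{\bullet}(\mathcal{O}(G),k)$ (and, for $p=2$, $S\g\otimes S'\widetilde{\g}$ modulo $z^2-\phi(z)$) from the two identifications and verify it is an isomorphism. For $p>2$ the left-hand side carries no relations beyond graded-commutativity, which hold automatically because the degree-$1$ generators are odd and $\chara k\ne2$; the map is then an isomorphism by the K\"unneth decomposition, comparing graded dimensions. For $p=2$ I must check the relation $z^2=\phi(z)$: Proposition \ref{hochcoh}(2) gives $[x_i^*]^2=[\beta_i]$ when $r_i=1$ and $[x_i^*]^2=0$ when $r_i>1$, while $\phi$ factors through $\gr(\g)_1=\span\{\partial_{x_j}:r_j=1\}$ and sends $\partial_{x_i}$ to the degree-$2$ generator attached to the $i$-th relation precisely when $r_i=1$; thus $z^2=\phi(z)$ holds on generators, the map is well defined, and it is again an isomorphism by K\"unneth.
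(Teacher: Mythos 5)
Your proposal is correct, but it reaches the crucial canonical map $\widetilde{\g}\to H^2(\mathcal{O}(G),k)$ by a genuinely different route than the paper. The paper works with points of $G$: it introduces the set $Y\subset G(k[t]/t^{p^r})$ of points $y$ with $y(J_i)\subset (t^{p^{r-i}})$, defines a surjection $c:Y\to\widetilde{\g}$ by extracting the coefficient of $t^{p^{r-1}}$ on $J_1$, and obtains $\theta(y)\in H^2(\mathcal{O}(G),k)$ as the obstruction to lifting $y$ to $k[t]/t^{p^r+1}$ (the failure $\widetilde{y}(ab)-\widetilde{y}(a)\widetilde{y}(b)=t^{p^r}\omega(a,b)$); it then checks in coordinates that $\theta$ factors as $\eta\circ c$ with $\eta$ linear. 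You instead use the complete-intersection structure: the relation module $\mathfrak{q}/\mathfrak{m}_P\mathfrak{q}$ of a minimal presentation, the square-zero-extension map $(\mathfrak{q}/\mathfrak{m}_P\mathfrak{q})^*\to H^2(\mathcal{O}(G),k)$, and the Frobenius-semilinear isomorphism $J_1/J_1\mathfrak{m}\to\mathfrak{q}/\mathfrak{m}_P\mathfrak{q}$, $f\mapsto \tilde f^{\,p}$ (well defined since Frobenius is additive in characteristic $p$ and carries $J_1\mathfrak{m}$ into $\mathfrak{m}_P\mathfrak{q}$). The two constructions are dual faces of the same obstruction computation --- the paper pulls back the extension $k[t]/t^{p^r+1}\to k[t]/t^{p^r}$ along points, while you push forward the universal extension $P/\mathfrak{m}_P\mathfrak{q}\to\mathcal{O}(G)$ along functionals --- and indeed both send the dual basis element of the $i$-th relation to $[\beta_i]$, so both reduce the isomorphism claim to the same coordinate K\"unneth count via Proposition \ref{hochcoh}. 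What your version buys: linearity in the functional is manifest (fix one linear splitting $s$ and note $\lambda\mapsto \lambda(s(a)s(b)-s(ab))$ is linear), and you avoid verifying surjectivity of $c$ and the factorization $\theta=\eta\circ c$; the cost is that canonicity now rests on the intrinsic nature of the relation module of a minimal presentation, which you assert rather than prove --- this is standard (it is the second Andr\'e--Quillen homology of $\mathcal{O}(G)$ over $k$), but it deserves at least a citation, since it is the only place where your ``canonical'' claim is not self-contained. Your filtration bookkeeping and the $p=2$ verification of $z^2=\phi(z)$ on the generators $\partial_{x_i}$ (using $[x_i^*]^2=[\beta_i]$ for $r_i=1$ and $[x_i^*]^2=0$ for $r_i>1$, with additivity of squaring in characteristic $2$) are correct and match the paper's conclusion.
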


\begin{proof} Let us construct a canonical linear map $\eta: \widetilde\g\to H^2(\mathcal{O}(G),k)$.
To this end let $Y\subset G(k[t]/t^{p^r})$ be the set of points $y$ such that $y(J_i)\subset (t^{p^{r-i}})$ for all $i$. 
Thus each $y\in Y$ defines a linear map $c_{y}: J_1\to k$ sending $f\in J_1$ to the coefficient of $t^{p^{r-1}}$ in $f(y)=y(f)$. 
It is clear that $c_{y}(J_1\mathfrak{m})=0$, so $c_{y}$ descends to a linear map 
$J_1/J_1\mathfrak{m}\to k$. In other words, we can view $c_y$ as an element of $\widetilde{\g}$. 
Thus we obtain a canonical map $c: Y\to \widetilde{\g}$. It is easy to see using  the explicit presentation of $\mathcal O(G)$ given by \eqref{conncohomiso} that $c$ is surjective. 

For $y\in Y$, let us try to lift the homomorphism $y: \mathcal{O}(G)\to k[t]/t^{p^r}$ to a homomorphism 
$\mathcal O(G)\to k[t]/t^{p^r+1}$. This runs into an obstruction in $H^2(\mathcal O(G),k)$. Namely, let $\widetilde{y}$ be any lift of $y$ just as a linear map. Then $\widetilde{y}(ab)-\widetilde{y}(a)\widetilde{y}(b)=t^{p^r}\omega(a,b)$, where 
$\omega\in Z^2(\mathcal O(G),k)$, and the cohomology class of $\omega$ is independent of 
the choice of the lifting. Thus we obtain a well-defined map $\theta: Y\to H^2(\mathcal{O}(G),k)$, given by $\theta(y)=[\omega]$. 

It is easy to check using  \eqref{conncohomiso} that 
the map $\theta$ factors through $c$, i.e., there exists a (necessarily unique) 
map $\eta: \widetilde{\g}\to H^2(\mathcal O(G),k)$ such that $\theta=\eta\circ c$. Moreover, it is easy to see that $\eta$ is linear. Thus $\eta$ is a desired linear map $\widetilde{\g}\to H^2(\mathcal O(G),k)$. 

There is also an obvious canonical inclusion $\iota: \g\hookrightarrow H^1(\mathcal{O}(G),k)$. If $p>2$, the maps $\iota$ and $\eta$ define a graded algebra map $\xi: \wedge \g\otimes S'(\widetilde \g)\to H^\bullet(\mathcal{O}(G),k)$. It is easy to check using \eqref{conncohomiso} that $\xi$ is an isomorphism, which proves (1). 
If $p=2$, we similarly have a graded map $\hat\xi: S \g\otimes S'(\widetilde \g)\to H^\bullet(\mathcal O(G),k)$, which annihilates $z^2-\phi(z)$ for any $z\in \g$, so descends to a graded map $\xi: S\g\otimes S'(\widetilde\g)/(z^2-\phi(z), z\in \g)\to H^\bullet(\mathcal O(G),k)$, which is again checked to be an isomorphism using \eqref{conncohomiso}. This proves (2). 
\end{proof}

\begin{remark} 1. The group structure of $G$ is irrelevant in Proposition \ref{kunneth}; only its scheme structure (i.e., the algebra structure of $\mathcal O(G)$) matters. In other words, 
the description of the cohomology ring of $\mathcal{O}(G)$ in Proposition \ref{kunneth} is functorial 
even in the category of schemes, where there are more morphisms than in the category of group schemes. 

2. The filtration on $\widetilde{\g}$ does not split canonically, even in the category of group schemes. Indeed, take $G=\alpha_{p,1}\times \alpha_{p,2}$, i.e., 
$\mathcal{O}(G)=k[x,y]/(x^p,y^{p^2})$, where $x$ and $y$ are primitive. 
Then $J_1/J_1\mathfrak m$ is spanned by $x$ and $y^p$. 
We have an automorphism of $G$ given by $x\mapsto x+y^p,y\mapsto y$. 
It is clear that this automorphism acts unipotently with respect to the filtration but nontrivially on $J_1/J_1\mathfrak m$, which shows that the filtration on $J_1/J_1\mathfrak{m}$ and hence on $\widetilde{\g}$ does not split canonically. 

3. Note that if all $r_i=1$ then Proposition \ref{kunneth} reduces to (\ref{h33})-(\ref{h32}).
\end{remark}
 
Set $x_t^{l*}:=(x_t^l)^*$, $1\le t\le n$, $0\le l\le r_t-1$. For every $1\le i,j\le n$, let
\begin{equation}\label{betagamma}
\beta_j:=\sum_{l=1}^{p^{r_j}-1}x_j^{l*}\ot x_j^{(p^{r_j}-l)*}\,\,\,\text{and}\,\,\,\gamma_{ij}:=x_i^{*}\ot \beta_j.
\end{equation} 

\begin{corollary}\label{thirdhochcoh}
Let $G$ be a finite connected group scheme over $k$ with Lie algebra $\g$ as above. Then we have a short exact sequence
$$0\to \g \ot \widetilde{\g} \to H^{3}(\mathcal{O}(G),k)\to \wedge^3\g \to 0,$$ which canonically splits for $p>2$. Moreover, the set 
$$\{[\gamma_{ij}]\mid 1\le i,j\le n\}\cup \{[x_i^*\ot x_j^*\ot x_l^*]\mid 1\le i<j<l\le n\}
$$
forms a linear basis for $H^{3}(\mathcal{O}(G),k)$.
\end{corollary}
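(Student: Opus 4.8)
The plan is to read off the third graded component of the algebra isomorphism supplied by Proposition \ref{kunneth}, and then to pin down an explicit basis of cocycles using the coordinate presentation \eqref{conncohomiso}, the K\"unneth isomorphism \eqref{conncohom}, and Proposition \ref{hochcoh}. Throughout I use the canonical maps $\iota\colon \g\hookrightarrow H^1(\mathcal O(G),k)$ and $\eta\colon \widetilde\g\to H^2(\mathcal O(G),k)$ from the proof of Proposition \ref{kunneth}. I define the first map of the asserted sequence to be the cup product $w\ot v\mapsto \iota(w)\cup \eta(v)$ into $H^3(\mathcal O(G),k)$, and the second map to be the canonical projection onto $\wedge^3\g$; the bulk of the argument is to check that these assemble into a short exact sequence with the stated properties.

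For $p>2$, Proposition \ref{kunneth} gives $H^\bullet(\mathcal O(G),k)\cong \wedge\g\ot S'(\widetilde\g)$, where $\g$ sits in degree $1$ and $\widetilde\g$ in degree $2$. Since $S'(\widetilde\g)$ is concentrated in even degrees, the only bidegrees with total degree $3$ are $(\wedge^3\g,\,S^0)$ and $(\wedge^1\g,\,S^1\widetilde\g)$, so $H^3\cong \wedge^3\g\oplus(\g\ot\widetilde\g)$ canonically, the second summand being the image of $\iota(\g)\cup\eta(\widetilde\g)$. This is exactly the claimed sequence together with its canonical splitting. For $p=2$, Proposition \ref{kunneth} gives $H^\bullet(\mathcal O(G),k)\cong S\g\ot S'(\widetilde\g)/(z^2-\phi(z),\, z\in\g)$. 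Before passing to the quotient the degree-$3$ part is $S^3\g\oplus(\g\ot\widetilde\g)$, and the degree-$3$ part of the defining ideal is spanned by the elements $w(z^2-\phi(z))=wz^2-w\phi(z)$ with $w,z\in\g$. Hence in $H^3$ every cubic monomial $wz^2$ carrying a repeated factor is identified with $w\phi(z)\in\g\ot\widetilde\g$, while the squarefree cubics survive and furnish a copy of $\wedge^3\g$. This shows that the cup product image $\g\ot\widetilde\g$ is a submodule with quotient $\wedge^3\g$, giving the short exact sequence, which now need not split.

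The main obstacle is the injectivity of $\g\ot\widetilde\g\to H^3$ in the $p=2$ case---one must rule out a nonzero element of $\g\ot\widetilde\g$ lying in the degree-$3$ part of the ideal---and simultaneously confirm that the cokernel is all of $\wedge^3\g$ rather than a proper quotient. I would settle both, as well as produce the explicit basis, by descending to coordinates. By \eqref{conncohom} one has $H^\bullet(\mathcal O(G),k)\cong\bigotimes_{i=1}^n H^\bullet(k[x_i]/(x_i^{p^{r_i}}),k)$, and Proposition \ref{hochcoh} describes each tensor factor as a (super)commutative algebra on the degree-$1$ class $[x_i^*]$ and the degree-$2$ class $[\beta_i]$, with the single exceptional relation $[x_i^*]^2=[\beta_i]$ occurring exactly when $p=2$ and $r_i=1$.

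Expanding this tensor product in degree $3$ produces precisely the monomials $[x_i^*][x_j^*][x_l^*]$ for $i<j<l$ and $[x_i^*][\beta_j]=[\gamma_{ij}]$ for all $i,j$; these are distinct elements of the monomial basis of the tensor product, hence linearly independent and spanning. This yields the asserted basis, identifies the cup product image with the span of the $[\gamma_{ij}]$ and the surviving quotient with the span of the $[x_i^*][x_j^*][x_l^*]$, and gives the count $\dim H^3=\binom n3+n^2=\dim\wedge^3\g+\dim(\g\ot\widetilde\g)$, which forces the sequence of the previous paragraph to be exact in both cases and completes the argument.
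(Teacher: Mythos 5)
Your proposal is correct and follows essentially the same route as the paper, whose proof of Corollary \ref{thirdhochcoh} is simply ``Follows from Proposition \ref{kunneth}'': you read off the degree-$3$ component of the isomorphism $H^\bullet(\mathcal{O}(G),k)\cong\wedge\g\ot S'(\widetilde\g)$ (resp.\ its $p=2$ analogue) and verify the basis via \eqref{conncohomiso}, \eqref{conncohom} and Proposition \ref{hochcoh}, exactly the details the paper leaves implicit. Your extra care about injectivity of $\g\ot\widetilde\g\to H^3$ in characteristic $2$, settled by the monomial dimension count $\dim H^3=\binom{n}{3}+n^2$, is a legitimate filling-in of that one-line proof rather than a different method.
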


\begin{proof}
Follows from Proposition \ref{kunneth}.
\end{proof} 

\section{The proof of Theorem \ref{classchptr}}

In the next two sections assume $k$ has characteristic $p>2$, unless otherwise explicitly specified.

By \cite[Theorem 2.6]{eo}, $\mathcal{C}$ is symmetric tensor equivalent to $\Rep(H,R,\Phi)$ for some finite dimensional triangular quasi-Hopf algebra $(H,R,\Phi)$ over $k$ with the Chevalley property. Thus, we have to prove the following theorem.

\begin{theorem}\label{symmwithphi}
Let $(H,R,\Phi)$ be a finite dimensional triangular quasi-Hopf algebra over $k$ with the Chevalley property. Then there exists a pseudotwist $F$ for $H\ot H$ such that $$(H,R,\Phi)^F=(H^F,R_u,1),$$ where $u\in H^F$ is a grouplike element of order $\le 2$.
\end{theorem}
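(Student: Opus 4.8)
The plan is to adapt Drinfeld's inductive argument \cite[Propositions 3.5, 3.6]{d} to the radical filtration $H\supset I\supset I^2\supset\cdots$, with $I=\Rad(H)$, trivializing $\Phi$ and normalizing $R$ one degree at a time. Because $H$ has the Chevalley property, $I$ is a quasi-Hopf ideal, so $\gr(H)$ is a graded triangular quasi-Hopf algebra and $B:=H/I$ is a semisimple triangular quasi-Hopf algebra. The category $\Rep(B)$ is then a symmetric fusion category which is moreover integral; by \cite[Theorem 1.1]{o} it has a symmetric fiber functor to $\text{Ver}_p$, and integrality forces the target to be the integral part $\sVec\subset\text{Ver}_p$ (the objects $L_i$, $2\le i\le p-1$, have non-integer $\FPdim$). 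Hence $\Rep(B)\simeq\Rep(G,\epsilon)$ for a finite constant group scheme $G$ and a central element $\epsilon$ of order $\le 2$, and after an initial pseudotwist I may assume $\Phi\equiv 1$ and $R\equiv R_\epsilon$ modulo $I$.

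Next I would fix a lift of this data into $H$ itself. Using Lemma \ref{sqroot}(1) I lift the primitive idempotents of $B$, realizing $B$ as a subalgebra of $H$, and I lift $\epsilon$ to an element $u_0\in H$. Since $u_0^2\in 1+I$, Lemma \ref{sqroot}(2) --- which requires $p>2$ --- lets me rescale $u_0$ to an honest involution $u:=u_0(u_0^2)^{-1/2}$ with $u^2=1$; this $u$ will become the grouplike of order $\le 2$ sought in the statement once $\Phi$ is trivialized, and $R_u$ is its associated symmetric $R$-matrix. The induction hypothesis at stage $n$ is that a pseudotwist has already been applied so that $\Phi\equiv 1$ and $R\equiv R_u$ modulo $I^n$.

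For the inductive step I examine the degree-$n$ parts of $\Phi-1$ and $R-R_u$. The pentagon identity \eqref{phi1} forces the leading term of $\Phi$ to be a Hochschild $3$-cocycle of the connected unipotent group scheme $U$ controlling the radical, i.e. a class in $H^3(\mathcal{O}(U),k)$, while the pseudotwist freedom \eqref{phi5} alters it only by a coboundary. By Corollary \ref{thirdhochcoh} this space sits in the sequence $0\to\g\ot\widetilde{\g}\to H^3(\mathcal{O}(U),k)\to\wedge^3\g\to 0$, which splits canonically precisely because $p>2$. The triangularity relations \eqref{phi2}--\eqref{phi3} tie $\Phi$ to the symmetric $R$-matrix via $R^{-1}=R_{21}$; reading these at leading order shows that a nonzero obstruction class is incompatible with the existence of a symmetric braiding, so --- testing against the explicit generators $[\gamma_{ij}]$ and $[x_i^*\ot x_j^*\ot x_l^*]$ of Corollary \ref{thirdhochcoh}, and using the invertibility of $2$ --- the class must vanish. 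A pseudotwist then removes the degree-$n$ term of $\Phi$, and the same twist, together with the residual $H^2$-freedom in $R$, restores $R\equiv R_u$ to one higher order; here \cite[Theorem 8.1]{eov} supplies the identification of the controlling deformation complex and the vanishing of the higher obstructions. Composing all the twists (the filtration is finite) yields the single pseudotwist $F$ with $(H,R,\Phi)^F=(H^F,R_u,1)$.

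The hardest step, and the technical core of the proof, is the vanishing of the degree-$n$ obstruction in $H^3(\mathcal{O}(U),k)$. One must pin down exactly how the triangular constraints \eqref{phi2}--\eqref{phi3} translate into a symmetry condition on the cocycle, and then verify, class by class against the basis of Corollary \ref{thirdhochcoh}, that this condition is satisfied only by $0$. It is here that $p>2$ is genuinely used --- both for the involution $u$ through Lemma \ref{sqroot}(2) and for the canonical splitting of $H^3$, which degenerates at $p=2$ (compare \eqref{h32}); this is consistent with characteristic $2$ being excluded from the theorem and deferred to a separate treatment. Getting this cocycle bookkeeping right, so that triangularity really annihilates the obstruction at every stage, is the main obstacle I anticipate.
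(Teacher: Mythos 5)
Your overall architecture (reduce the semisimple quotient via \cite{o}/\cite{eov}, then run a Drinfeld-style induction along the radical filtration) matches the paper's, but two concrete steps in your outline are gapped. First, the $R$-normalization. You propose to keep $R\equiv R_u$ modulo $I^n$ simultaneously with $\Phi\equiv 1$, and to restore $R$ at each stage using ``the residual $H^2$-freedom'' plus \cite[Theorem 8.1]{eov}; but that theorem is a statement about symmetric fusion categories (it enters only to identify the semisimple quotient as $\Rep(G,\epsilon)$) and supplies no deformation complex and no vanishing of higher obstructions, so this step is unsupported as written. The paper sidesteps the issue entirely: Proposition \ref{killingR} trivializes $R$ to $R_u$ \emph{exactly}, in one shot, by rewriting $J_{21}^{-1}RJ=R_u$ as $J^{-1}\sigma RJ=\sigma R_u$ in the crossed product $k[\mathbb{Z}/2\mathbb{Z}]\ltimes H^{\ot 2}$ and applying idempotent lifting (Lemma \ref{sqroot}(1)) to $(\sigma R+1\ot 1)/2$, with a second application arranging $\Delta(u)=u\ot u$. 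Then, during the $\Phi$-induction, $R_u$ is preserved because each pseudotwist is taken of the form $F=1+\widetilde f$ with $\widetilde f$ even and symmetric (commuting with $\sigma R_u$ and $u\ot u$); this choice is available because the identity $\phi_{123}+\phi_{321}=0$ shows $\phi=d(f_{21})$ as well as $\phi=df$, so one may replace $f$ by $(f_{12}+f_{21})/2$ (using $p>2$). Without some such mechanism, your inductive step has no control over what the $\Phi$-killing twist does to $R$, and the induction as you set it up would stall.

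Second, the obstruction space. You place the leading term $\phi$ of $\Phi-1$ in $H^3(\mathcal{O}(U),k)$ for ``the connected unipotent group scheme $U$ controlling the radical'' and plan to test against the basis $[\gamma_{ij}]$, $[x_i^*\ot x_j^*\ot x_l^*]$ of Corollary \ref{thirdhochcoh}. But $\gr(H)\cong k\mathcal{G}$ for a finite \emph{supergroup} scheme $\mathcal{G}$ (genuinely super whenever $\epsilon\ne 1$), and the correct receptacle is the even part of $H^3(\mathcal{O}(\mathcal{G}),k)$, which by the K\"unneth computation contains the extra summand $\g_0\ot S^2\g_1$. Its coefficients $c_{ijk}$ (the terms $x_i^*\ot\tau_j\ot\tau_k$) are killed in Proposition \ref{killingPhi} only by the condition ${\rm Alt}(\phi)=0$, which comes from the quantum Yang--Baxter equation \eqref{qybe} --- not from the hexagons \eqref{phi2}--\eqref{phi3} alone, which (with $\Delta(u)=u\ot u$) yield $\phi_{123}+\phi_{321}=0$ and kill the $\gamma_{ij}$-coefficients via $2\ne 0$ (Lemma \ref{qtalt}). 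Your basis test misses the odd directions entirely, so the ``class-by-class'' vanishing you anticipate as the technical core would be incomplete exactly there. Two smaller inaccuracies: the semisimple quotient is $\Rep(G,\epsilon)$ for a finite \emph{semisimple} group scheme, not a constant one --- by Nagata's theorem $G=\Gamma\ltimes P^D$ (Remark \ref{epsilon=1}), and $\Rep(\mu_p)$ already shows constancy fails --- and the connected group scheme carrying the cohomology need not be unipotent (it contains $P^D$), although for the Hochschild computation only the local algebra structure of $\mathcal{O}$ matters. Finally, note the paper never argues that a nonzero class is ``incompatible with a symmetric braiding'' in the abstract; it proves the two cocycle identities above and then verifies coefficient-by-coefficient that $\phi$ is a coboundary --- precisely the bookkeeping you flagged but did not carry out.
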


We will prove Theorem \ref{symmwithphi} in several steps.

\subsection{$\gr(H)$} Let $(H,R,\Phi)$ be a finite dimensional triangular quasi-Hopf algebra over $k$ with the {\em Chevalley property}. Let $I:=\Rad(H)$ be the Jacobson radical of $H$. Since $I$ is a quasi-Hopf ideal of $H$, the associated graded algebra $\gr(H)=\bigoplus_{r\ge 0} H[r]$ has a natural structure of a graded triangular quasi-Hopf algebra with some $R$-matrix $R_0\in H[0]^{\otimes 2}$ and associator $\Phi_0\in H[0]^{\otimes 3}$ (see \ref{sta}).

\begin{proposition}\label{main1}
The following hold:
\begin{enumerate}
\item
$H[0]$ is semisimple.
\item
$(H[0],R_0,\Phi_0)$ is a triangular quasi-Hopf subalgebra of $(\gr(H),R_0,\Phi_0)$.
\item
$\Rep(H[0],R_0,\Phi_0)$ is symmetric tensor equivalent to $\Rep(G,\epsilon)$ for some finite semisimple group scheme $G$ over $k$.
\item
$(\gr(H),R_0,\Phi_0)$ is pseudotwist equivalent to a graded triangular Hopf algebra with $R$-matrix $R_{\epsilon}$, whose degree $0$-component is $(kG,R_{\epsilon},1)$
\end{enumerate}
\end{proposition}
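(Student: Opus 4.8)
The plan is to dispatch the four assertions in order, with essentially all the genuine content concentrated in (3) and one clean observation powering (4). For (1) I would simply note that $H[0]=H/I=H/\Rad(H)$ is by construction a semisimple algebra (a finite product of matrix algebras over $k$, since $k$ is algebraically closed). For (2) I would unwind the grading: because $I$ is a quasi-Hopf ideal, one has $\Delta(I^r)\subseteq\sum_{i+j=r}I^i\ot I^j$, so the induced comultiplication $\gr(\Delta)$, together with the counit and antipode, are homogeneous of degree $0$; in particular $\gr(\Delta)(H[0])\subseteq(\gr(H)\ot\gr(H))[0]=H[0]\ot H[0]$. Since $R_0\in H[0]^{\ot 2}$ and $\Phi_0\in H[0]^{\ot 3}$ already lie in degree $0$ (as recorded in the setup, and as forced for the invertible counit-normalized structure elements of a graded quasi-Hopf algebra), the degree-$0$ part $H[0]$ is closed under all structure maps and is thus a triangular quasi-Hopf subalgebra of $(\gr(H),R_0,\Phi_0)$.

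For (3) — the crux — I would observe that by (1) the category $\C_0:=\Rep(H[0],R_0,\Phi_0)$ is a semisimple symmetric tensor category, i.e.\ a symmetric fusion category, and it is integral since $\FPdim(X)=\dim_k(X)\in\BZ$ for every object $X$. By Ostrik's theorem \cite[Theorem 1.1]{o} it admits a symmetric tensor functor to $\text{Ver}_p$; the integrality of $\C_0$, together with \cite[Theorem 8.1]{eov}, forces this functor to land in the subcategory $\sVec\subset\text{Ver}_p$ cut out by objects of integer dimension. Hence $\C_0$ carries a symmetric fiber functor to $\sVec$, and super-Tannakian reconstruction identifies $\C_0$ with $\Rep(\mathcal{G},\epsilon)$ for a finite supergroup scheme $\mathcal{G}$. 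Semisimplicity of $\C_0$ kills the odd part of the Lie superalgebra (an exterior factor $\wedge\g_1^*$ would make $\Rep$ non-semisimple), so $\mathcal{G}$ is an ordinary finite group scheme $G$, and semisimplicity of $kG$ makes $G$ a finite semisimple (linearly reductive) group scheme, with $\epsilon\in kG$ grouplike of order $\le 2$.

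For (4) I would leverage the fact that $\Phi_0$ lives entirely in degree $0$. From (3), comparing Frobenius--Perron dimensions of simple objects shows $H[0]\cong kG$ as algebras, and the standard correspondence between symmetric tensor equivalences and gauge transformations then yields a pseudotwist $J_0\in H[0]^{\ot 2}$ with $(H[0],R_0,\Phi_0)^{J_0}\cong(kG,R_\epsilon,1)$. I would then regard $J_0$ as an element of $\gr(H)^{\ot 2}$ via $H[0]\hookrightarrow\gr(H)$; it remains a pseudotwist, since invertibility and the counit normalization are inherited. The decisive point is that, by (2), $\gr(\Delta)$ preserves degree $0$, so every factor of $\Phi_0^{J_0}=(1\ot J_0^{-1})(\id\ot\Delta)(J_0^{-1})\Phi_0(\Delta\ot\id)(J_0)(J_0\ot 1)$ stays in $H[0]^{\ot 3}$ and the computation inside $\gr(H)$ agrees with the one inside $H[0]$, which gives $1$. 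Therefore $\Phi_0^{J_0}=1$ throughout $\gr(H)$, so $\gr(H)^{J_0}$ is a graded triangular Hopf algebra; its $R$-matrix is $R_0^{J_0}=R_\epsilon\in H[0]^{\ot 2}$ and its degree-$0$ component is $(kG,R_\epsilon,1)$, as required.

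The main obstacle is step (3): parts (1) and (2) are essentially tautological and formal, and part (4) reduces to the clean remark that a degree-$0$ pseudotwist trivializing $\Phi_0$ on $H[0]$ automatically trivializes it on all of $\gr(H)$, since neither $\Phi_0$ nor $J_0$ ever leaves degree $0$ under the degree-preserving comultiplication. In (3) the real content is imported from Ostrik's classification and \cite[Theorem 8.1]{eov}, and the integrality hypothesis is exactly what is needed to squeeze the target of the fiber functor from $\text{Ver}_p$ down to $\sVec$; the remaining care lies in correctly identifying $G$ as an ordinary, linearly reductive group scheme.
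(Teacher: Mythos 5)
Your proposal is correct and takes essentially the same route as the paper: parts (1) and (2) are treated as formal, part (3) is exactly the application of \cite[Theorem 8.1]{eov} (which relies on \cite[Theorem 1.1]{o}) to the integral symmetric fusion category $\Rep(H[0],R_0,\Phi_0)$, and part (4) is obtained, just as in the paper, by viewing the resulting pseudotwist $J_0\in H[0]^{\ot 2}$ as a degree-$0$ pseudotwist for $\gr(H)$, where it trivializes $\Phi_0$ and preserves the grading. Your extra unpacking in (3) (landing in $\sVec$ via integrality, then killing the odd part by semisimplicity) and in (4) (realizing the equivalence by a pseudotwist via dimension comparison) merely spells out details the paper leaves to the cited results.
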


\begin{proof}
Parts (1) and (2) are clear. 

Since $\Rep(H[0],R_0,\Phi_0)$ is an integral symmetric fusion category, Part (3) follows from \cite[Theorem 8.1]{eov} (which in turn relies on \cite[Theorem 1.1]{o}).

By Part (3), there exists a pseudotwist $J$ for $H[0]$ such that $$(H[0],R_0,\Phi_0)^J\cong(kG,R_{\epsilon},1).$$ Hence, 
pseudotwisting $\gr(H)$ using $J$, we have $$(\gr(H),R_0,\Phi_0)^J\cong(\gr(H)^J,R_{\epsilon},1),$$ 
and since $J$ has degree $0$, we have $\gr(H)^J=\left(\bigoplus_{r\ge 0} H[r]\right)^J$, as claimed in Part (4).
\end{proof} 

\begin{corollary}\label{gr}
Let $(H,R,\Phi)$ be a finite dimensional triangular quasi-Hopf algebra over $k$ with the Chevalley property. 
Then $\gr(H)$ is pseudotwist equivalent to $k\mathcal{G}$ for some finite supergroup scheme $\mathcal{G}$ over $k$ containing $G$ as a closed subgroup scheme. \qed
\end{corollary}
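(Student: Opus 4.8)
The plan is to obtain the corollary from Proposition \ref{main1}(4) by supplying the single missing ingredient: the recognition of a finite dimensional triangular Hopf algebra with $R$-matrix $R_\epsilon$ as the group algebra of a finite supergroup scheme.

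First I would apply Proposition \ref{main1}(4) verbatim. It furnishes a degree-$0$ pseudotwist $J$ for which $\bar H:=\gr(H)^J$ is a genuine (associative and coassociative) graded triangular Hopf algebra, with trivial associator, $R$-matrix $R_\epsilon$, and degree-$0$ component $(kG,R_\epsilon,1)$. Because $J$ has degree $0$ this already realizes $\gr(H)$ as pseudotwist equivalent to $\bar H$, so the entire remaining content of the corollary is the identification $\bar H\cong k\mathcal{G}$ for a finite supergroup scheme $\mathcal{G}$, together with the embedding $G\hookrightarrow\mathcal{G}$.

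For the identification I would proceed by superization. The grouplike element $\epsilon\in\bar H$ has order $\le 2$, so conjugation by $\epsilon$ is an involutive algebra automorphism and splits $\bar H=\bar H_{\bar 0}\oplus\bar H_{\bar 1}$ into its $\pm 1$-eigenspaces, a $\mathbb{Z}/2$-grading. Regrading $\bar H$ by this decomposition and correcting the coproduct using the explicit form of $R_\epsilon$ turns $\bar H$ into a \emph{cocommutative Hopf superalgebra}; equivalently, the forgetful functor $\Rep(\bar H,R_\epsilon)\to\sVec$, refined by the $\epsilon$-grading on each module, is a symmetric fiber functor, since the braiding $R_\epsilon\circ\flip$ is carried to the super-flip of $\sVec$. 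A finite dimensional cocommutative Hopf superalgebra over $k$ is the group algebra $k\mathcal{G}$ of a finite supergroup scheme $\mathcal{G}$ --- the super-analogue of the fact that a finite dimensional cocommutative Hopf algebra is $kG$ for a finite group scheme $G$, obtained by dualizing to the supercommutative coordinate Hopf superalgebra $\mathcal{O}(\mathcal{G})$; here I would cite \cite{g}. Hence $\bar H\cong k\mathcal{G}$.

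Placing $G$ inside $\mathcal{G}$ is then formal: the degree-$0$ inclusion is a Hopf subalgebra map $kG=\bar H[0]\hookrightarrow\bar H=k\mathcal{G}$, and dualizing gives a surjection of coordinate Hopf algebras $\mathcal{O}(\mathcal{G})\twoheadrightarrow\mathcal{O}(G)$, i.e.\ a closed embedding $G\hookrightarrow\mathcal{G}$. The only substantive step is the superization in the third paragraph; everything else transcribes Proposition \ref{main1}(4) and dualizes. I expect the main obstacle to be making the superization canonical and verifying that $R_\epsilon$-triangularity is \emph{exactly} supercocommutativity of the regraded coproduct, but as this correspondence is already established in \cite{g}, for the purposes of this corollary it can be invoked rather than reproved.
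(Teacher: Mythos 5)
Your proposal is correct and is essentially the paper's own (implicit) argument: the corollary is stated with no written proof precisely because it is Proposition \ref{main1}(4) combined with the standard superization correspondence you describe, namely that a finite dimensional triangular Hopf algebra with $R$-matrix $R_\epsilon$ is, after regrading by $\mathrm{Ad}(\epsilon)$-eigenspaces and modifying the coproduct, a cocommutative Hopf superalgebra and hence the group algebra $k\mathcal{G}$ of a finite supergroup scheme, with $kG=\bar H[0]$ giving the closed embedding $G\hookrightarrow\mathcal{G}$ upon dualizing (note $\epsilon$ is central in $kG$, so $G$ lands in the even part). The only quibble is the citation: the superization correspondence is more appropriately attributed to the Kostant-type theorem and its positive-characteristic form (cf.\ \cite{ma}) than to \cite{g}, but this does not affect the mathematics.
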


\begin{remark}\label{epsilon=1}
(1) By Nagata's theorem (see \cite[p.223]{a}), we have $G=\Gamma\ltimes P^D$, where $\Gamma$ is a finite group of order coprime to $p$ and $P$ is a finite abelian $p$-group. Hence, we have $kG=k\Gamma\ltimes \mathcal{O}(P)$. Thus, $\epsilon\in \Gamma$ is a central element of order $\le 2$ acting trivially on $\mathcal{O}(P)$.

(2) Note that $\mathcal{G}$ is an ordinary group scheme if and only if $\epsilon$ is central in $\gr(H)$. Hence, if $\epsilon=1$ then $\mathcal{G}$ is an ordinary group scheme.
\end{remark}

\subsection{Trivializing $R$} 
By Corollary \ref{gr}, we can assume that $\gr(H)=k\mathcal{G}$ as {\em superalgebras}.

Pick a lift $u$ of $\epsilon$ to $H$ such that $u^2=1$. Note that by Lemma \ref{sqroot}(1), $u$ is unique up to conjugation by $1+\Rad(H)$. Set
$$
R_u:=\frac{1}{2}(1\ot 1+1\ot u+u\ot 1-u\ot u).$$

\begin{proposition}\label{killingR} 
There exists a pseudotwist $J$ for $H^{\ot 2}$, projecting to $1$ in $H[0]^{\ot 2}$, such that $$(H,R,\Phi)^J=(H^J,R_u,\Phi^J),$$ and $J^{-1}\Delta(u)J=u\otimes u$. 
\end{proposition}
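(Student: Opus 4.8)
The plan is to produce $J$ as a product of two square roots, each furnished by Lemma \ref{sqroot}(2), exploiting that the radical filtration makes every relevant element unipotent. Set $I:=\Rad(H)$ and write $\flip$ for the order-two algebra automorphism of $H^{\ot 2}$ defined by $\flip(X):=R_u X_{21}R_u^{-1}$, where $X_{21}$ is the ordinary flip $a\ot b\mapsto b\ot a$. Since $u^2=1$, a direct computation gives $R_u^2=1\ot 1$ and $(R_u)_{21}=R_u$, so $R_u^{-1}=R_u=(R_u)_{21}$; in particular $\flip$ is an involution and $\flip(R_u)=R_u$. By Corollary \ref{gr} we may assume $\gr(H)=k\mathcal{G}$, so that both $R$ and $R_u$ reduce to the same $R_\epsilon$ in $H[0]^{\ot 2}$ while $R_\epsilon^2=1$; hence $R$ and $R_u$ are congruent modulo $\Rad(H^{\ot 2})$. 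Moreover $\epsilon$ is grouplike in $\gr(H)$, so $\Delta(u)\equiv u\ot u$ modulo $\Rad(H^{\ot 2})$ as well.

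First I would trivialize $R$. Set $N:=R_u R$, which lies in $1+\Rad(H^{\ot 2})$ by the above. Using the triangularity axiom (\ref{triangR}) $R^{-1}=R_{21}$ together with $(R_u)_{21}=R_u$ and $R_u^2=1$, one checks $\flip(N)=R_u N_{21}R_u=N^{-1}$. Let $J_0:=N^{-1/2}$ be the unique square root in $1+\Rad(H^{\ot 2})$ given by Lemma \ref{sqroot}(2). Because $\flip$ is an algebra automorphism, $\flip(J_0)$ is a square root of $\flip(N)=N^{-1}$ lying in $1+\Rad(H^{\ot 2})$, so uniqueness forces $\flip(J_0)=N^{1/2}=J_0^{-1}$. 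Unwinding the definition of $\flip$ and using $R_u^2=1$, the relation $\flip(J_0)=J_0^{-1}$ is precisely $(J_0)_{21}^{-1}R J_0=R_u$, i.e.\ $R^{J_0}=R_u$ by (\ref{twistedR}). That $J_0$ is a genuine pseudotwist follows since $(\eps\ot\id)(N)=(\id\ot\eps)(N)=1$ (using $(\eps\ot\id)(R)=1=(\eps\ot\id)(R_u)$) and then applying the algebra map $\eps\ot\id$ to $J_0$ and invoking uniqueness of square roots once more; finally $J_0$ projects to $1\ot 1$ in $H[0]^{\ot 2}$ as it lies in $1+\Rad(H^{\ot 2})$.

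It remains to correct $J_0$ so that $u$ also becomes grouplike without disturbing $R^{J_0}=R_u$. The key observation is that replacing $J_0$ by $J_0C$ leaves $R^{J_0}=R_u$ intact exactly when $\flip(C)=C$, since $R^{J_0C}=C_{21}^{-1}R_u C$ and $\flip(C)=C$ is equivalent to $C_{21}^{-1}R_u C=R_u$. Now consider $G:=\Delta^{J_0}(u)=J_0^{-1}\Delta(u)J_0$ and $V:=G(u\ot u)\in 1+\Rad(H^{\ot 2})$. Applying the commutation axiom (\ref{cocommR}) in $H^{J_0}$ (whose $R$-matrix is $R_u$) to the element $u$ gives $G_{21}=R_u G R_u$, which together with the fact that $u\ot u$ commutes with $R_u$ yields $\flip(V)=V$; and applying $\Delta^{J_0}$ to $u^2=1$ gives $G^2=1\ot 1$, which with $(u\ot u)^2=1$ yields $\Ad_{u\ot u}(V)=V^{-1}$. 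Hence $C:=V^{1/2}$ (Lemma \ref{sqroot}(2)) satisfies both $\flip(C)=C$ and $\Ad_{u\ot u}(C)=C^{-1}$. Taking $J:=J_0C$, the first relation preserves $R^{J}=R_u$, while the second gives $\Delta^{J}(u)=C^{-1}V\,\Ad_{u\ot u}(C)(u\ot u)=C^{-1}VC^{-1}(u\ot u)=u\ot u$ because $V=C^2$. As before, $J$ is a pseudotwist projecting to $1\ot 1$ in $H[0]^{\ot 2}$.

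The step I expect to be the main obstacle is establishing the three symmetry relations that feed the square-root lemma, namely $\flip(N)=N^{-1}$, $\flip(V)=V$, and $\Ad_{u\ot u}(V)=V^{-1}$: these are what force the two involutions $\flip$ and $\Ad_{u\ot u}$ to interact correctly, and each rests on a distinct structural input — the triangularity axiom (\ref{triangR}) for the first, the commutation axiom (\ref{cocommR}) applied to $u$ for the second, and $u^2=1$ for the third. Once these are secured, the uniqueness clause of Lemma \ref{sqroot}(2) converts each of them mechanically into the corresponding identity on a square root, and the compatibility $\flip(C)=C$ is exactly what guarantees that the corrective twist $C$ does not undo the trivialization of $R$.
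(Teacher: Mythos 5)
Your proof is correct, and it takes a genuinely different route from the paper's. The paper passes to the smash product $A:=k[\mathbb{Z}/2\mathbb{Z}]\ltimes H^{\ot 2}$ and applies the idempotent-lifting Lemma \ref{sqroot}(1) twice: first to the two lifts $(\sigma R+1\ot 1)/2$ and $(\sigma R_u+1\ot 1)/2$ of $(\sigma R_\epsilon+1\ot 1)/2$, extracting $J$ from a conjugating element $L=J+\sigma K$, and then again inside the centralizer of $\sigma R_u$ to conjugate $\Delta(u)$ to $u\ot u$. You instead stay inside $H^{\ot 2}$ and use the unique-square-root Lemma \ref{sqroot}(2), taking $J_0=(R_uR)^{-1/2}$ and the correction $C=\bigl(\Delta^{J_0}(u)(u\ot u)\bigr)^{1/2}$. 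The two arguments are secretly close: your $\flip$ is exactly conjugation by $\sigma R_u$ in the paper's $A$, and $\sigma R$, $\sigma R_u$ are involutions congruent mod the radical, so the paper conjugates them by lifting idempotents while you conjugate them by the square root of their product $N=(\sigma R_u)(\sigma R)=R_uR$ --- two classical mechanisms for the same conjugacy statement. What your version buys is canonicity and explicitness: $J$ is pinned down by the uniqueness clause of Lemma \ref{sqroot}(2), which makes the counit conditions $(\eps\ot\id)(J)=(\id\ot\eps)(J)=1$ and the preservation of $R^J=R_u$ under the correction $C$ fall out mechanically, and it directly generalizes the paper's own remark for $\epsilon=1$ (where $J=R^{-1/2}$) to arbitrary $\epsilon$. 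What the paper's version buys is softness: it never has to verify your three symmetry identities $\flip(N)=N^{-1}$, $\flip(V)=V$, $\Ad_{u\ot u}(V)=V^{-1}$ (which I have checked do hold, via $R^{-1}=R_{21}$, axiom (\ref{cocommR}) applied to $u$ in $H^{J_0}$, and $u^2=1$, respectively --- note your first identity uses that $X\mapsto X_{21}$ is an automorphism, so $(R_uR)_{21}=R_uR_{21}$, with no commutation of $R_u$ and $R$ needed). Both proofs use $p>2$ in an essential way, yours through the square roots and the paper's through the various factors of $1/2$, consistent with Remark \ref{venk}.
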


\begin{proof}
We need to show that there exists a pseudotwist $J$ in $H^{\ot 2}$, projecting to $1$ in $H[0]^{\ot 2}$, such that $J_{21}^{-1}RJ=R_u$ and $J^{-1}\Delta(u)J=u\otimes u$. Let us first find $J$ satisfying the first condition. This condition is equivalent to the identity $J^{-1}\sigma RJ=\sigma R_u$ in $k[\mathbb{Z}/2\mathbb{Z}]\ltimes H^{\ot 2}$, where $\sigma$ is the generator of $\mathbb{Z}/2\mathbb{Z}$.

Applying Lemma \ref{sqroot}(1) to $A:=k[\mathbb{Z}/2\mathbb{Z}]\ltimes H^{\ot 2}$ and $e:=(\sigma R_{\epsilon} +1\otimes 1)/2$, we get that $\widetilde{e}_1:=(\sigma R +1\otimes 1)/2$ and $\widetilde{e}_2:=(\sigma R_u +1\otimes 1)/2$ are two liftings of $e$, so there exists $L\in A$, projecting to $1$ in $k[\mathbb{Z}/2\mathbb{Z}]\ltimes H[0]^{\ot 2}$, such that $\widetilde{e}_1L=L\widetilde{e}_2$, hence $\sigma RL=L\sigma R_u$. Writing $L=J+\sigma K$, where $J,K\in H^{\ot 2}$, we find that $\sigma RJ=J\sigma R_u$ and $J$ projects to $1$ in $H[0]^{\ot 2}$, as desired.

Thus, we may (and will) assume without loss of generality that $R=R_u$. It remains to find a pseudotwist $J$ projecting to $1$ in $H[0]^{\otimes 2}$ such that $J$ commutes with $\sigma R_u$ and $J^{-1}\Delta(u)J=u\otimes u$. To this end, note that $\widetilde e_1:=(\Delta(u)+1\otimes 1)/2$ and $\widetilde e_2:=(u\otimes u+1\otimes 1)/2$ are both liftings of the idempotent 
$e:=(\epsilon\ot \epsilon+1\ot 1)/2$. Thus Lemma \ref{sqroot}(1) applied to the centralizer 
$A'$ of $\sigma R_u$ in $A$ yields an element $L\in A$ projecting to $1$ in $k[\mathbb{Z}/2\mathbb{Z}]\ltimes H[0]^{\ot 2}$ and commuting with $\sigma R_u$ 
such that $\Delta(u)L=L(u\otimes u)$. Writing $L=J+\sigma K$, where $J,K\in H^{\ot 2}$, we find that $\Delta(u)J=J(u\otimes u)$ and $J$ commutes with $\sigma R_u$ and projects to $1$ in $H[0]^{\ot 2}$, as desired.
\end{proof}

\begin{remark}
If $\epsilon=1$ then $u=1$, and we may take $J:=R^{-1/2}=R_{21}^{1/2}$ (which is a well-defined invertible element in $H\ot H$ by Lemma \ref{sqroot}(2)). Indeed, it is straightforward to verify that we have $$J_{21}^{-1}RJ=R_{21}^{1/2}RR_{21}^{1/2}=R_{21}R=1.$$
\end{remark}

\begin{remark}\label{venk}
By \cite[Section 1.5]{v}, the assumption that $p\ne 2$ in Proposition \ref{killingR} is essential, even when $\Phi=1$. Namely, it is shown there that the tensor category $\Rep(\alpha_2)$, equipped with the symmetric structure determined by the $R$-matrix $R:=1\ot 1 + x\ot x$, does not admit a symmetric fiber functor to $\text{Ver}_2=\Vect$.
\end{remark}

\subsection{Trivializing $\Phi$} Let $(H,R_u,\Phi)$ be a finite dimensional triangular quasi-Hopf algebra over $k$ with the Chevalley property, where $u\in H$ satisfies $\Delta(u)=u\otimes u$, $u^2=1$ and projects to $\epsilon$ in $H[0]$. Let $m$, $\varepsilon$ denote the multiplication and counit maps of $\mathcal{O}(\mathcal{G})$.

If $\Phi\ne 1$, consider $\Phi-1$. If it has degree $r$ then let $\phi$ be its projection to $\gr(H)^{\ot 3}[r]$. 
Then $\phi$ is even. 

For every permutation $(i_1i_2i_3)$ of $(123)$, we will use $\phi_{i_1i_2i_3}$ to denote the $3$-tensor obtained by permuting the components of $\phi$ accordingly, multiplied by $\pm 1$ according to the sign rule.

\begin{lemma}\label{qtalt} 
The following hold:

\begin{enumerate}
\item  
$$\phi\circ (\id\ot \id\ot m)+\phi\circ (m\ot \id\ot \id)= \varepsilon\ot \phi +\phi\circ (\id\ot m\ot \id)+ \phi\ot \varepsilon$$
and
$$
\phi\circ (\id\ot \id\ot 1)=\phi\circ (1\ot \id\ot \id)=\phi\circ (\id\ot 1\ot \id)=\varepsilon \ot \varepsilon,$$ 
i.e., $\phi\in Z^3(\mathcal{O}(\mathcal{G}),k)$ is an {\em even}  normalized Hochschild $3$-cocycle of $\mathcal{O}(\mathcal{G})$ with coefficients in the trivial module $k$.
\item 
${\rm Alt}(\phi):=\phi_{312}-\phi_{132}+\phi_{123} +\phi_{231}-\phi_{213}-\phi_{321}=0$.

\item
$\phi_{123}+\phi_{321}=0$.
\end{enumerate}
\end{lemma}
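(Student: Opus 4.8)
The plan is to extract the leading (lowest filtration) term of each structural axiom of $(H,R_u,\Phi)$ after reducing into $\gr(H)=k\mathcal{G}$. The basic observation is that, via the duality $k\mathcal{G}=\mathcal{O}(\mathcal{G})^*$, an element of $\gr(H)^{\ot n}=(k\mathcal{G})^{\ot n}$ is the same as a linear functional $\mathcal{O}(\mathcal{G})^{\ot n}\to k$, i.e.\ a Hochschild $n$-cochain of $\mathcal{O}(\mathcal{G})$ with trivial coefficients; under this duality the comultiplication, unit and counit of $k\mathcal{G}$ become the multiplication $m$, the counit $\varepsilon$ and the unit $1$ of $\mathcal{O}(\mathcal{G})$. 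Writing $\Phi=1+\phi+(\text{filtration}>r)$, I would first prove (1). Reducing the pentagon identity \eqref{phi1} modulo filtration degree $>r$, each factor $\Phi$ contributes either its degree-$0$ part $1$ or its symbol $\phi$, and since $1\cdot\phi=\phi$, the degree-$r$ part is $(\id\ot\id\ot\Delta)(\phi)+(\Delta\ot\id\ot\id)(\phi)-(\id\ot\Delta\ot\id)(\phi)-1\ot\phi-\phi\ot1=0$. Dualizing $\Delta\leftrightarrow m$ and $1\leftrightarrow\varepsilon$ (the unit of $k\mathcal{G}$ corresponding to the counit of $\mathcal{O}(\mathcal{G})$), this is exactly the first displayed equation, i.e.\ the Hochschild $3$-cocycle condition; the degree-$r$ part of the counit axiom \eqref{phieps} gives the normalization (second display). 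Since evenness of $\phi$ has already been recorded, $\phi\in Z^3(\mathcal{O}(\mathcal{G}),k)$ is an even normalized cocycle.

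For (2) and (3) I would feed $\Phi=1+\phi$ into the triangular axioms and again keep the degree-$r$ part. The key structural point is that $R=R_u$ reduces in $k\mathcal{G}$ to the canonical symmetric structure $R_\epsilon$ of $\sVec$ (with $\Delta(u)=u\ot u$ by Proposition~\ref{killingR}), so it lies in degree $0$ and contributes nothing to the symbol; and because $\phi$ is even, transporting the various $R_u$'s across the factors of $\phi$ is governed by the symmetric braiding of $\sVec$ and reproduces the signed permutations $\phi_{i_1i_2i_3}$. Carrying this out on the quantum Yang--Baxter equation \eqref{qybe}, whose degree-$0$ part is the ordinary Yang--Baxter relation for $R_u$, the degree-$r$ part collapses (after cancelling the common factor $R_{12}R_{13}R_{23}=R_{23}R_{13}R_{12}$) to the six-term alternating sum ${\rm Alt}(\phi)=0$, which is (2). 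For (3), the same reduction applied to a hexagon, say \eqref{phi2} (whose left-hand side $(\Delta\ot\id)(R_u)$ has degree $0$), yields a three-term relation among $\phi_{312},\phi_{132},\phi_{123}$, and \eqref{phi3} yields a companion relation; together with (2) and triangularity \eqref{triangR} (which forces $R_u^2=1$ and thereby the compatibility of the two hexagons under reversal), these combine to give $\phi_{123}+\phi_{321}=0$.

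The routine parts are the degree counting and the dualization $\Delta\leftrightarrow m$; the real work, and the step I expect to be the main obstacle, is the super-sign bookkeeping in (2) and (3). One must verify that moving each $R_u$ past $\phi$ --- using $uxu^{-1}=(-1)^{|x|}x$ and $R_u^2=1$ --- produces precisely the Koszul signs encoded in the notation $\phi_{i_1i_2i_3}$, so that each triangular axiom, reduced to leading order, collapses exactly to the claimed signed combination of permutations of $\phi$ and to nothing else. Equivalently, one is checking that the hexagon and Yang--Baxter coherences, read in the symmetric category $\sVec$, impose on the infinitesimal associator $\phi$ exactly the vanishing of its total antisymmetrization together with its antisymmetry under full reversal.
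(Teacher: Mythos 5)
Your proposal is correct and follows essentially the same route as the paper: parts (1) and (2) are the leading-order (degree-$r$) reductions of the pentagon/counit axioms \eqref{phi1}, \eqref{phieps} and of the quantum Yang--Baxter equation \eqref{qybe}, dualized via $k\mathcal{G}=\mathcal{O}(\mathcal{G})^*$, exactly as in the paper's terse proof. For (3), your plan of extracting a three-term relation from each hexagon \eqref{phi2}, \eqref{phi3} (using $\Delta(u)=u\otimes u$) and combining the two with ${\rm Alt}(\phi)=0$ is precisely the paper's computation, which derives $\phi_{312}-\phi_{132}+\phi_{123}=0$ and $-\phi_{231}+\phi_{213}-\phi_{123}=0$ and subtracts them to obtain $\phi_{123}+\phi_{321}=0$.
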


\begin{proof}
(1) Follows from (\ref{phi1}) and (\ref{phieps}) in a straightforward manner.

(2) Follows from (\ref{qybe}) in a straightforward manner.

(3) Using (\ref{phi2}), (\ref{phi3}) and the identity $\Delta(u)=u\otimes u$, it is straightforward to verify that 
\begin{equation}\label{eq1}
\phi_{312}-\phi_{132}+\phi_{123}=0
\end{equation}
and
\begin{equation}\label{eq2}
-\phi_{231}+\phi_{213} -\phi_{123}=0.
\end{equation}
Therefore, by using Part (2), we get
\begin{eqnarray*}
\lefteqn{0=\phi_{312}-\phi_{132}+\phi_{123}-(-\phi_{231}+\phi_{213} -\phi_{123})}\\
& = & {\rm Alt}(\phi)+\phi_{321}+\phi_{123}\\
& = & \phi_{123}+\phi_{321},
\end{eqnarray*}
as claimed.
\end{proof}

Let $\mathcal{G}_0$ be the even part of $\mathcal{G}$, and let ${\bf \g}={\bf \g}_0\oplus {\bf \g}_1$ be the Lie superalgebra of $\mathcal{G}$, where ${\bf \g}_0=\Lie(\mathcal{G}_0)$ is the Lie algebra of $\mathcal{G}_0$ (see \ref{fsgsc}). Since 
$\mathcal{O}(\mathcal{G})=\mathcal{O}(\mathcal{G}_0)\otimes \wedge {\bf \g}_1^*$ as superalgebras, it follows from the  K\"{u}nneth formula that $$H^{\bullet}(\mathcal{O}(\mathcal{G}),k)=H^{\bullet}(\mathcal{O}(\mathcal{G}_0^{\circ}),k)\otimes H^{\bullet}(\wedge {\bf \g}_1^*,k)=H^{\bullet}(\mathcal{O}(\mathcal{G}_0^{\circ}),k)\otimes S^{\bullet}{\bf \g}_1,$$ 
where $\mathcal{G}_0^{\circ}$ is the identity component of $\mathcal{G}_0$. In particular, we have
\begin{eqnarray*}
\lefteqn{H^{3}(\mathcal{O}(\mathcal{G}),k)}\\
& = &
H^{3}(\mathcal{O}(\mathcal{G}_0^{\circ}),k)\oplus \left(H^{2}(\mathcal{O}(\mathcal{G}_0^{\circ}),k)\otimes {\bf \g}_1\right)\oplus \left(H^{1}(\mathcal{O}(\mathcal{G}_0^{\circ}),k)\otimes S^{2}{\bf \g}_1\right)\oplus S^{3}{\bf \g}_1\\
& = & H^{3}(\mathcal{O}(\mathcal{G}_0^{\circ}),k)\oplus \left(H^{2}(\mathcal{O}(\mathcal{G}_0^{\circ}),k)\otimes {\bf \g}_1\right)\oplus \left({\bf \g}_0\otimes S^{2}{\bf \g}_1\right)\oplus S^{3}{\bf \g}_1.
\end{eqnarray*}
Thus by Corollary \ref{thirdhochcoh}, the even part $H^{3}(\mathcal{O}(\mathcal{G}),k)_{\text{even}}$ of $H^{3}(\mathcal{O}(\mathcal{G}),k)$ is given by
\begin{eqnarray*}
\lefteqn{H^{3}(\mathcal{O}(\mathcal{G}),k)_{\text{even}}}\\
& = &
H^{3}(\mathcal{O}(\mathcal{G}_0^{\circ}),k)\oplus \left({\bf \g}_0\otimes S^{2}{\bf \g}_1\right)\\
& = & 
\left(\g_0\otimes \widetilde{\g_0}\right)\oplus \wedge^3\g_0 \oplus \left(\g _0\otimes S^{2}\g _1\right).
\end{eqnarray*}

\begin{proposition}\label{killingPhi}
The $3$-cocycle $\phi$ is a coboundary.
\end{proposition}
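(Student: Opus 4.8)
The plan is to prove that the even class $[\phi]\in H^3(\mathcal{O}(\mathcal{G}),k)_{\text{even}}$ vanishes, by playing the $S_3$-symmetry constraints that $\phi$ inherits from the quasi-Hopf axioms (Lemma \ref{qtalt}) against the explicit description of $H^3(\mathcal{O}(\mathcal{G}),k)_{\text{even}}$ computed above; this is the positive-characteristic analogue of Drinfeld's analysis of infinitesimal associators \cite{d}. First I would record the decomposition
$$H^3(\mathcal{O}(\mathcal{G}),k)_{\text{even}}=\left(\g_0\otimes \widetilde{\g_0}\right)\oplus \wedge^3\g_0\oplus \left(\g_0\otimes S^2\g_1\right),$$
and observe that every class here is decomposable, i.e. a cup product of the degree-$1$ and degree-$2$ generators furnished by Corollary \ref{thirdhochcoh}. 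Hence the graded-commutativity of the cohomology ring $H^\bullet(\mathcal{O}(\mathcal{G}),k)$ lets me compute the effect of each permutation $\phi\mapsto \phi_\sigma$ on a class in each summand purely via Koszul signs (combining cohomological degree with internal parity). The bookkeeping I expect to be routine but essential: the degree-$2$ factors $\widetilde{\g_0}$ and $S^2\g_1$ both have \emph{even} total parity, so the transposition $(13)$ fixes every class in the two ``mixed'' summands, whereas on the totally antisymmetric summand $\wedge^3\g_0$ it acts by $-1$.

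Next I would dispatch the two mixed summands. On $\g_0\otimes\widetilde{\g_0}$ and $\g_0\otimes S^2\g_1$ the flip acts as $+1$, so the relation $\phi_{123}+\phi_{321}=0$ from Lemma \ref{qtalt}(3) reads $2[\phi]=0$ on these components; since $p>2$, this forces the components of $[\phi]$ in both summands to vanish. This is the easy half of the argument, and it is exactly the place where the hypothesis $p\neq 2$ is used (in agreement with Remark \ref{venk}). Conceptually, $\g_0\otimes S^2\g_1$ is the ``genuinely super'' part and is killed here, while $\g_0\otimes\widetilde{\g_0}$ is the mixed piece of the purely even cohomology $H^3(\mathcal{O}(\mathcal{G}_0^\circ),k)$.

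The remaining summand $\wedge^3\g_0$ is the delicate one: the flip relation is automatically satisfied on it and gives no information, so I would instead invoke ${\rm Alt}(\phi)=0$, equivalently the hexagon identities \eqref{eq1}--\eqref{eq2}. On a class $\omega\in\wedge^3\g_0$ graded-commutativity yields $[{\rm Alt}(\phi)]=6\,\omega$ and $[\phi_{312}-\phi_{132}+\phi_{123}]=3\,\omega$, so these relations give $3\,\omega=0$, which kills the $\wedge^3\g_0$-component whenever $p\neq 3$. The main obstacle is precisely $p=3$: there the numerical factors $3$ and $6$ degenerate, and (reflecting the non-semisimplicity of $kS_3$ in that characteristic) the leading-order symmetry alone no longer detects $\omega$. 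Resolving this case is where the real work lies; I would expect it to require an extra input beyond the alternation relation — either a separate trivialization of the purely even part of $\phi$, coming from the ordinary (non-super, hence connected unipotent) triangular structure carried by $\mathcal{G}_0^\circ$, or a finer use of the full hexagon identities rather than their antisymmetrization. Once all three components are shown to vanish we conclude $[\phi]=0$, i.e. $\phi$ is a Hochschild coboundary, as claimed.
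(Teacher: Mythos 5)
Your overall strategy --- decompose $[\phi]$ in $H^{3}(\mathcal{O}(\mathcal{G}),k)_{\text{even}}$ and play the $S_3$-constraints of Lemma \ref{qtalt} against the three summands --- is indeed the paper's, but two things go wrong. First, a sign: the flip (with the sign rule) acts on the classes $[x_i^*\ot \tau_j\ot \tau_k]$ by $-1$, not $+1$. The Koszul sign entering the definition of $\phi_{321}$ is computed from \emph{internal parity}, where the two odd factors $\tau_j,\tau_k$ contribute $(-1)$, whereas the rearrangement back to standard order in the cohomology ring is governed by \emph{total} degree, where each $\tau_j$ has degree $1+1$ and hence commutes with everything; these two signs do not cancel the way you assumed. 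This matches the paper's computation, where $\phi_{321}$ carries coefficient $-c_{ijk}$ on this summand. Consequently the relation $\phi_{123}+\phi_{321}=0$ is vacuous on $\g_0\otimes S^{2}\g_1$ (just as on $\wedge^3\g_0$) and kills only the $\g_0\otimes\widetilde{\g_0}$-component (the $\gamma_{ij}$'s, where the flip acts by $+1$); your ``easy half'' disposes of a summand that in fact survives your step.

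Second, and decisively, your treatment of $\wedge^3\g_0$ (and, once the sign is corrected, of $\g_0\otimes S^{2}\g_1$ as well) runs the alternation at the level of cohomology classes, where ${\rm Alt}$ acts by $6$ and the hexagons by $3$, and you explicitly leave $p=3$ unresolved. The paper closes exactly this gap by arguing at the \emph{cochain} level: write $\phi=\sum a_{ij}\gamma_{ij}+\sum b_{ijl}(x_i^*\ot x_j^*\ot x_l^*)+\sum c_{ijk}(x_i^*\ot \tau_j\ot \tau_k)+df$; after the flip relation kills the $a_{ij}$, use that ${\rm Alt}(\phi)=0$ holds \emph{exactly} (Lemma \ref{qtalt}(2), from the quantum Yang--Baxter equation) and that ${\rm Alt}(df)=0$ \emph{exactly}, because $k\mathcal{G}$ is super-cocommutative. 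This yields $\sum b_{ijl}\,{\rm Alt}(x_i^*\ot x_j^*\ot x_l^*)+\sum c_{ijk}\,{\rm Alt}(x_i^*\ot \tau_j\ot \tau_k)=0$ as an identity of honest tensors, not of classes; each such alternation is a signed combination of distinct basis monomials with coefficients $\pm 1$ (or $\pm 2$ when $j=k$, harmless since $p>2$), and these alternations are linearly independent, so $b_{ijl}=c_{ijk}=0$ over any field of characteristic $p>2$ --- no division by $3$ or $6$ ever occurs, so $p=3$ requires no separate treatment. This cochain-level exploitation of super-cocommutativity is the missing idea in your proposal; your suggested fallbacks for $p=3$ (a separate trivialization of the even part, or finer hexagon bookkeeping) remain speculation, so as written your proof is genuinely incomplete precisely in characteristic $3$.
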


\begin{proof} 
Let $\{\tau_j\}_{j=1}^m$ be a basis of $\g_1$, and retain the notation and linear basis introduced in (\ref{betagamma}). Then by Corollary \ref{thirdhochcoh}, Lemma \ref{qtalt} and the above, we can express $\phi\in Z^3(\mathcal{O}(\mathcal{G}),k)$ in the following form:
$$
\phi=\sum_{i,j=1}^{n}a_{ij}
\gamma_{ij}+\sum_{1\le i<j<l\le n}b_{ijl}(x_i^*\ot x_j^*\ot x_l^*)
+ \sum_{\substack{1\le i\le n \\ 1\le j\le k\le m}}c_{ijk}(x_i^*\ot \tau_j\ot \tau_k) + df,$$ 
for some $a_{ij},b_{ijl},c_{ijk}\in k$ and even  
$f\in k\mathcal{G}^{\ot 2}$. 
Since we have $[\gamma_{ij}]=[(\gamma_{ij})_{31}]$, $[x_i^*\ot x_j^*\ot x_l^*]=-[x_l^*\ot x_j^*\ot x_i^*]$ and $[x_i^*\ot \tau_j\ot \tau_k]=-[\tau_k\ot \tau_j\ot x_i^*]$, it follows that 
$$
\phi_{321}=\sum_{i,j=1}^{n}a_{ij}\gamma_{ij}
-\sum_{1\le i<j<l\le n}b_{ijl}(x_i^*\ot x_j^*\ot x_l^*)-\sum_{\substack{1\le i\le n \\ 1\le j\le k\le m}}c_{ijk}(x_i^*\ot \tau_j\ot \tau_k) +dg,
$$
for some $g\in k\mathcal{G}^{\ot 2}$.
Since $\phi_{123}+\phi_{321}=0$ by Lemma \ref{qtalt}, we have
$$
0=\phi_{123}+\phi_{321}=\sum_{i,j=1}^{n}2a_{ij}\gamma_{ij}
+d(f+g),
$$
which implies that $\sum_{i,j=1}^{n}2a_{ij}[\gamma_{ij}]=0$. But 
$p\ne 2$, hence $a_{ij}=0$ for every $i,j$.  
Thus, we have 
\begin{equation}\label{phiinp}
\phi=\sum_{1\le i<j<l\le n}b_{ijl}(x_i^*\ot x_j^*\ot x_l^*)+ \sum_{\substack{1\le i\le n \\ 1\le j\le k\le m}}c_{ijk}(x_i^*\ot \tau_j\ot \tau_k)+df.
\end{equation}

Now by Lemma \ref{qtalt}, $\text{Alt}(\phi)=0$, and since  $k\mathcal{G}$ is super cocommutative we have $\text{Alt}(df)=0$. Hence by (\ref{phiinp}), 
\begin{equation*}
0=\text{Alt}(\phi)=\sum_{1\le i<j<l\le n}b_{ijl}\text{Alt}(x_i^*\ot x_j^*\ot x_l^*)+\sum_{\substack{1\le i\le n \\ 1\le j\le k\le m}}c_{ijk}\text{Alt}(x_i^*\ot \tau_j\ot \tau_k).
\end{equation*}
Finally since the tensors $\text{Alt}(x_i^*\ot x_j^*\ot x_l^*)$, $\text{Alt}(x_i^*\ot \tau_j\ot \tau_k)$ are linearly independent, it follows that $b_{ijl}=c_{ijk}=0$ for every $i,j,l,k$. Thus $\phi=df$ is a coboundary, as claimed.
\end{proof}

\subsection{The proof of Theorem \ref{symmwithphi}} 
Let $\Gamma:=\mathcal{G}_0/\mathcal{G}_0^{\circ}$. Then $\Gamma$ is a finite constant group with 
order not divisible by $p$. Since we have $\mathcal{G}_0=\Gamma\ltimes \mathcal{G}_0^{\circ}$, it follows that $\mathcal{O}(\mathcal{G}_0)=\mathcal{O}(\Gamma)\otimes \mathcal{O}(\mathcal{G}_0^{\circ})$ as algebras.
 
By Proposition \ref{killingPhi}, we have 
$\phi=df$ for some even $f\in (\mathcal{O}(\mathcal{G})^*)^{\ot 2}$ with the same degree $r$ as $\phi$. 
Since $\phi_{123}+\phi_{321}=0$, it follows that $\phi=d(f_{21})$. Thus, replacing $f=f_{12}$ by $(f_{12}+f_{21})/2$ (this is possible as $p>2$), we see that $\phi=df$ for some  
even and symmetric (in the super sense) element $f\in (\mathcal{O}(\mathcal{G})^*)^{\ot 2}$ with the same degree as $\phi$, i.e., $f$ commutes with $\sigma R_{\epsilon}$ and $\epsilon\ot \epsilon$. 

Choose an even symmetric lift $\widetilde{f}$ of $f$ to $H^{\ot 2}$, i.e., $\widetilde{f}$ commutes with $\sigma R_{u}$ and $u\otimes u$ (it is clear that such a lift exists). Then it follows from the above that the pseudotwist $F:=1+\widetilde{f}$ is 
even and symmetric, so $(H,R_u,\Phi)^{F}=(H^{F},R_u,\Phi^{F})$, and the pseudotwisted associator $\Phi^{F}$ is equal to $1+$ terms of degree $\ge r+1$. Also, the condition $\Delta(u)=u\ot u$ is preserved. By continuing this procedure, we will come to a situation where $(H,R_u,\Phi)^F=(H^F,R_u,1)$ for some pseudotwist $F\in H^{\ot 2}$, as desired. \qed

Since local quasi-Hopf algebras have the Chevalley property, Theorem \ref{symmwithphi} and Remark \ref{epsilon=1} imply the following special case.

\begin{theorem}\label{localp>2}
Let $(H,R,\Phi)$ be a finite dimensional local triangular quasi-Hopf algebra over $k$. Then  
$(H,R,\Phi)$ is pseudotwist equivalent to a triangular cocommutative Hopf algebra with $R$-matrix $1\ot 1$. \qed
\end{theorem}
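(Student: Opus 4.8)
The plan is to read off this statement as a direct specialization of Theorem \ref{symmwithphi}, combined with Remark \ref{epsilon=1}, by checking that in the local case every piece of residual data (the group scheme, the grouplike $u$, and hence the $R$-matrix) trivializes. Since local quasi-Hopf algebras have the Chevalley property, Theorem \ref{symmwithphi} already applies verbatim, so the only work is to identify $R_u$ with $1\ot 1$ and to argue that the resulting algebra is an honest cocommutative Hopf algebra.

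First I would use locality to pin down the semisimple part. Because $H$ is local and $k$ is algebraically closed, $H/\Rad(H)=k$, so in the notation of \ref{sta} we have $H[0]=k$. Consequently the symmetric fusion category $\Rep(H[0],R_0,\Phi_0)$ of Proposition \ref{main1}(3) is just $\Vect$, which forces the associated finite group scheme $G$ to be trivial and the grouplike $\epsilon$ to equal $1$. Now I would invoke Theorem \ref{symmwithphi} to produce a pseudotwist $F$ for $H\ot H$ with $(H,R,\Phi)^F=(H^F,R_u,1)$, where $u\in H^F$ is grouplike of order $\le 2$ lifting $\epsilon$. Since $H^F=H$ as an algebra, $u$ projects to $\epsilon=1$ in $H^F[0]=k$, so $u\in 1+\Rad(H)$ and $u^2=1$; by the uniqueness clause of Lemma \ref{sqroot}(2) the only square root of $1=1+0$ inside $1+\Rad(H)$ is $1$ itself, whence $u=1$. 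Substituting $u=1$ into $R_u=\frac12(1\ot 1+1\ot u+u\ot 1-u\ot u)$ collapses it to $1\ot 1$.

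Finally I would conclude by noting that the associator of $(H,R,\Phi)^F$ is now literally $1$, so $H^F$ is an ordinary (not merely quasi-) Hopf algebra, and that the $R$-matrix $1\ot 1$ makes it cocommutative and triangular through \eqref{cocommR}. Equivalently, since $\epsilon=1$, Remark \ref{epsilon=1}(2) shows the supergroup scheme $\mathcal{G}$ of Corollary \ref{gr} is ordinary, so $k\mathcal{G}$ carries no super structure and is cocommutative. Thus $(H,R,\Phi)^F=(H^F,1\ot 1,1)$ is the desired triangular cocommutative Hopf algebra, and $F$ witnesses the pseudotwist equivalence. I do not expect a serious obstacle here, as the argument is essentially bookkeeping; the only point requiring genuine care is the identity $u=1$, which is precisely where the hypothesis $p>2$ enters through Lemma \ref{sqroot}(2) (in characteristic $2$ the $R$-matrix need not trivialize, as Remark \ref{venk} illustrates).
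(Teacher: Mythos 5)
Your proposal is correct and follows exactly the paper's route: the paper proves Theorem \ref{localp>2} precisely by noting that locality implies the Chevalley property and then specializing Theorem \ref{symmwithphi} via Remark \ref{epsilon=1}, where $H[0]=k$ forces $G$ to be trivial and $\epsilon=1$, hence $u=1$ and $R_u=1\ot 1$. Your use of the uniqueness clause of Lemma \ref{sqroot}(2) to pin down $u=1$ is a clean justification of a step the paper leaves implicit (its remark after Proposition \ref{killingR} simply asserts that $\epsilon=1$ gives $u=1$), but it is the same argument in substance.
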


\begin{remark}
Note that Theorem \ref{localp>2} is equivalent to Corollary \ref{classuniptr}.
\end{remark}

The following corollary in known in the Hopf case even without the symmetry assumption (see Remark \ref{newrmk} below).

\begin{corollary}\label{alphapsym}
Let $\mathcal{C}$ be a finite symmetric unipotent tensor category over $k$ such that $\FPdim(\mathcal{C})=p$. Then $\mathcal{C}$ is symmetric tensor equivalent to either $\Rep(\mathbb{Z}/p\mathbb{Z})$ or $\Rep(\alpha_p)$.
\end{corollary}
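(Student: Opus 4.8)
The plan is to combine the classification theorem (Corollary \ref{classuniptr}, equivalently Theorem \ref{localp>2}) with an elementary analysis of finite unipotent group schemes of order $p$. Since $\mathcal{C}$ is a finite symmetric unipotent tensor category over $k$, Corollary \ref{classuniptr} tells us that $\mathcal{C}$ is symmetric tensor equivalent to $\Rep(U)$ for a unique finite unipotent group scheme $U$ over $k$, with the fiber functor landing in $\Vect$ and the symmetric structure being the trivial one (i.e., $R = 1 \otimes 1$). The Frobenius--Perron dimension condition translates into $\dim \mathcal{O}(U) = \FPdim(\Rep(U)) = \FPdim(\mathcal{C}) = p$, so $U$ is a finite unipotent group scheme whose coordinate Hopf algebra has dimension exactly $p$.

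The main step is therefore to classify, up to isomorphism of group schemes, the finite unipotent group schemes $U$ over $k$ with $\dim \mathcal{O}(U) = p$. First I would observe that since $p$ is prime, $U$ has no proper nontrivial closed subgroup schemes, so $U$ is a simple object in the category of finite unipotent group schemes; by Nagata's theorem (quoted in Remark \ref{epsilon=1}(1)) or the structure theory in \cite{w}, such a $U$ is commutative and is of height one, i.e., annihilated by the Frobenius. There are then exactly two possibilities: either $U$ is the constant group scheme $\mathbb{Z}/p\mathbb{Z}$ (the \'{e}tale case, where $\mathcal{O}(U)$ is the function algebra on the group $\mathbb{Z}/p\mathbb{Z}$), or $U$ is connected, in which case $\mathcal{O}(U) = k[x]/(x^p)$ with $x$ primitive, which is exactly $\alpha_p = \mathbb{G}_{a,1}$ as described in \ref{fgsc}. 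I would verify these are the only options by noting that a $1$-dimensional primitive generator of $\mathcal{O}(U)$ must satisfy $x^p = 0$ (connected case) or the algebra is reduced hence \'{e}tale hence constant of order $p$.

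Finally, translating back through the equivalence $\mathcal{C} \cong \Rep(U)$ with the trivial symmetric structure, the two group schemes $\mathbb{Z}/p\mathbb{Z}$ and $\alpha_p$ yield the two stated symmetric tensor categories $\Rep(\mathbb{Z}/p\mathbb{Z})$ and $\Rep(\alpha_p)$. I expect the main obstacle to be the verification that these two are genuinely inequivalent as \emph{symmetric} tensor categories, and that no further symmetric structures arise: since Corollary \ref{classuniptr} already pins down both the underlying group scheme \emph{and} the (trivial) symmetric structure uniquely, this reduces to the purely group-scheme-theoretic fact that $\mathbb{Z}/p\mathbb{Z} \not\cong \alpha_p$ (the former is \'{e}tale, the latter connected), which distinguishes $\Rep(\mathbb{Z}/p\mathbb{Z})$ (semisimple) from $\Rep(\alpha_p)$ (non-semisimple, with $\mathcal{O}(\alpha_p)$ local). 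Thus the classification of order-$p$ unipotent group schemes is the only real content, and it is routine given the structure theory cited in \ref{fgsc}.
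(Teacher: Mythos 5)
Your proposal is correct and takes essentially the same route as the paper, whose entire proof is to cite Corollary \ref{classuniptr} and leave implicit the classification of order-$p$ unipotent group schemes as $\mathbb{Z}/p\mathbb{Z}$ or $\alpha_p$, which is exactly what you spell out via the connected--\'etale dichotomy. One small slip worth fixing: an order-$p$ unipotent group scheme need not have height one (Frobenius is an isomorphism on the \'etale $\mathbb{Z}/p\mathbb{Z}$), and Nagata's theorem as quoted in Remark \ref{epsilon=1} concerns a different situation; neither claim is needed, since your subsequent case split (étale $\Rightarrow$ constant $\mathbb{Z}/p\mathbb{Z}$; connected $\Rightarrow$ $\mathcal{O}(U)=k[x]/(x^p)$ with $x$ primitive, i.e., $U=\alpha_p$) already carries the argument.
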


\begin{proof}
Follows immediately from Theorem \ref{classuniptr}. (See also  Corollary \ref{cor3} below.)
\end{proof}

\section{triangular Hopf algebras with the Chevalley property}

We first observe the following consequence of Theorem \ref{symmwithphi}.

\begin{corollary}\label{symchevprop}
Let $(H,R)$ be a finite dimensional triangular Hopf algebra with the Chevalley property over $k$. Then there exists a finite supergroup scheme $\mathcal{G}$ over $k$ such that $(H,R)$ is {\em twist} equivalent to $(k\mathcal{G},R_{\epsilon})$. If moreover $H$ is local, then there exists a finite unipotent group scheme $U$ over $k$ such that $(H,R)$ is {\em twist} equivalent to $(kU,1\ot 1)$.
\end{corollary}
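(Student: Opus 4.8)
The plan is to derive this corollary directly from Theorem \ref{symmwithphi}, the one extra ingredient being that, for a genuine Hopf algebra, the pseudotwist produced by that theorem is automatically an honest twist. First I would apply Theorem \ref{symmwithphi} to the triangular quasi-Hopf algebra $(H,R,1)$, whose associator is trivial because $H$ is an ordinary Hopf algebra. This yields a pseudotwist $F$ for $H^{\ot 2}$ such that $(H,R,1)^F=(H^F,R_u,\Phi^F)$ with $\Phi^F=1$ and $u\in H^F$ a grouplike element of order $\le 2$.

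The key observation is then that $F$ satisfies the twist equation \eqref{twist}. Indeed, since $\Phi=1$, formula \eqref{phi5} reads
$$\Phi^F=(1\ot F^{-1})(\id\ot\Delta)(F^{-1})(\Delta\ot\id)(F)(F\ot 1),$$
so imposing $\Phi^F=1$ and rearranging (multiply on the left successively by $1\ot F$ and by $(\id\ot\Delta)(F)$) yields exactly $(\Delta\ot\id)(F)(F\ot 1)=(\id\ot\Delta)(F)(1\ot F)$, which is condition \eqref{twist}. Hence $F$ is a \emph{twist}, and since a twist preserves the underlying algebra and carries a Hopf algebra to a Hopf algebra, $(H^F,R_u)$ is a genuine triangular Hopf algebra twist equivalent to $(H,R)$, with $R_u=F_{21}^{-1}RF$ by \eqref{twistedR}.

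It remains to identify $(H^F,R_u)$ with $(k\mathcal{G},R_{\epsilon})$. Writing $R_u=\tfrac{1}{2}(1+u)\ot 1+\tfrac{1}{2}(1-u)\ot u$, one sees that $R_u$ implements the Koszul sign for the $\mathbb{Z}/2$-grading on $H^F$ induced by conjugation by $u$. The triangularity relation \eqref{cocommR}, namely $\Delta^{\mathrm{cop}}(\cdot)=R_u\Delta(\cdot)R_u^{-1}$, then says precisely that $H^F$ is super-cocommutative with respect to this grading; by the structure theorem for finite dimensional super-cocommutative Hopf superalgebras (as in \cite{g}) we obtain $H^F=k\mathcal{G}$ for a finite supergroup scheme $\mathcal{G}$, with $u\mapsto\epsilon$ the parity grouplike and $R_u=R_{\epsilon}$. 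For the local case, $H$ local forces $H[0]=k$, so the group scheme $G$ of Proposition \ref{main1} is trivial; by Remark \ref{epsilon=1} this gives $\epsilon=1$, hence $u=1$, $R_u=1\ot 1$, and $\mathcal{G}$ is an ordinary group scheme. As a twist leaves the algebra $H^F=H$ unchanged, $H^F$ is again local, so $\mathcal{G}=U$ is a finite unipotent group scheme, and $(H,R)$ is twist equivalent to $(kU,1\ot 1)$.

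The step I expect to require the most care is the identification in the last paragraph: checking that conjugation by $R_u$ combined with \eqref{cocommR} is equivalent to super-cocommutativity for the grading defined by $u$, and invoking the correct structure theorem to realize $H^F$ as $k\mathcal{G}$. By contrast, the twist-versus-pseudotwist point is a one-line rearrangement of \eqref{phi5}, and the local specialization follows immediately from Remark \ref{epsilon=1} once $\epsilon=1$ is established.
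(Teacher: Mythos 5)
Your proposal is correct and takes essentially the same route as the paper, whose entire proof consists of applying Theorem \ref{symmwithphi} to $(H,R,1)$ and observing that $1^J=1$ is precisely the twist condition \eqref{twist} --- exactly your rearrangement of \eqref{phi5}. The identification of $H^F$ with $k\mathcal{G}$ via super-cocommutativity and the local specialization via $\epsilon=1$, $u=1$ (Remark \ref{epsilon=1} and the preservation of the algebra structure under twisting) are left implicit in the paper, and you fill them in correctly.
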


\begin{proof}
Applying Theorem \ref{symmwithphi} to $(H,R,1)$ yields the  existence of a pseudotwist $J$ for $H$ such that $(H,R,1)^J=(H^J,R_{\epsilon},1)$. In particular, we have $1^J=1$, which is equivalent to $J$ being a twist.
\end{proof}

Next we observe that Corollary \ref{symchevprop} together with \cite[Corollary 6.3 \& Proposition 6.7]{g} imply the classification of certain finite dimensional triangular Hopf algebras with the Chevalley property over $k$. 

More precisely, let $\mathcal{T}$ be the set of all finite dimensional triangular Hopf algebras $H$ with the Chevalley property over $k$, such that $\gr(H)$ is the group algebra of a finite group scheme over $k$ (see Corollary \ref{gr}). For example, every finite dimensional triangular local Hopf algebra belongs to $\mathcal{T}$. 

Let $L$ be any finite group scheme over $k$. 
Recall that a twist $J$ for $kL$ is called \emph{minimal} if the triangular Hopf algebra $(kL^J,J_{21}^{-1}J)$ is minimal \cite{r}, i.e., if the left (right) tensorands of $J_{21}^{-1}J$ span $kL$. 

Recall also that a $2$-cocycle $\psi:L\times L\to \mathbb{G}_m$ (equivalently, a twist $\psi$ for $\mathcal{O}(L)$) is called \emph{non-degenerate} if the category $\text{Corep}(\mathcal{O}(L)_{\psi})$ of finite dimensional comodules over $\mathcal{O}(L)_{\psi}$ is equivalent to $\Vect$ (i.e., the coalgebra $\mathcal{O}(L)_{\psi}$ obtained from $\mathcal{O}(L)$ by twisting its comultiplication on one side is simple).

We are now ready to state the following classification result.

\begin{theorem}\label{classtrhas}
The following three sets are in canonical bijection with each other:
\begin{enumerate}
\item
Isomorphism classes of triangular Hopf algebras $H$ in $\mathcal{T}$.
\item
Conjugacy classes of triples $(G,L,J)$, where $G$ is a finite group scheme over $k$, $L$
is a closed group subscheme of $G$, and $J$ is a minimal twist for $kL$.
\item 
Conjugacy classes of triples $(G,L,\psi)$, where $G$ is a finite group scheme over $k$, 
$L$ is a closed group subscheme of $G$, and $\psi$ 
is a non-degenerate $2$-cocycle on $L$ with coefficients in $\mathbb{G}_m$.
\end{enumerate}
\end{theorem}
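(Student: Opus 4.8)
The plan is to deduce the theorem by combining the reduction of Corollary~\ref{symchevprop} with the structural results \cite[Corollary 6.3 \& Proposition 6.7]{g}, treating the bijections $(1)\leftrightarrow(2)$ and $(2)\leftrightarrow(3)$ in turn. First I would make membership in $\mathcal{T}$ concrete: by definition $\gr(H)=kG$ for an \emph{ordinary} finite group scheme $G$ equipped with its standard triangular structure $R_0=1\ot 1$, which by Remark~\ref{epsilon=1} is exactly the case $\epsilon=1$ of Corollary~\ref{gr}. The radical filtration, hence the algebra $\gr(H)$, depends only on the algebra structure of $H$, which is unchanged under twisting; moreover $\Rep(H,R)\simeq\Rep(G)$, so $G$ is the \emph{unique} group scheme attached to $(H,R)$ by Theorem~\ref{classchptr}. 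Specializing Corollary~\ref{symchevprop} to $\epsilon=1$ then produces a twist $J$ for $kG$ with $(H,R)\cong\bigl((kG)^J,\,J_{21}^{-1}J\bigr)$, the twist $J$ being determined up to gauge equivalence and the action of $\Aut(G)$. This already identifies $\mathcal{T}$ with the twists of group algebras of ordinary finite group schemes, carrying the $R$-matrix $J_{21}^{-1}J$.

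For $(1)\leftrightarrow(2)$, the forward map attaches to $(H,R)\cong\bigl((kG)^J,J_{21}^{-1}J\bigr)$ its minimal triangular sub-Hopf-algebra in the sense of Radford \cite{r}, namely the subalgebra generated by the left (equivalently right) tensorands of the $R$-matrix. By \cite[Corollary 6.3]{g} this minimal part is realized by a unique closed subgroup scheme $L\subseteq G$ together with a minimal twist $J_0$ for $kL$ whose inflation along $kL\hookrightarrow kG$ is gauge equivalent to $J$; I send $H\mapsto(G,L,J_0)$. The inverse map sends $(G,L,J)$ to $\bigl((kG)^J,J_{21}^{-1}J\bigr)$, where $J$ is regarded as a twist for $kG$ through $kL\hookrightarrow kG$; this lies in $\mathcal{T}$, being twist equivalent to $(kG,1\ot 1)$. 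That the two maps are mutually inverse, after passing to conjugacy classes, rests on the equivalence between minimality of $J_0$ and $L$ being spanned by the tensorands of $J_{0,21}^{-1}J_0$, so that no information is lost and the residual freedom in $J$ is exactly conjugation by $\Aut(G)$.

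For $(2)\leftrightarrow(3)$ I would pass to Hopf-algebraic duality. Writing $kL=\mathcal{O}(L)^*$, a minimal twist $J$ for $kL$ makes $(kL)^J$ a minimal triangular Hopf algebra whose dual is the function algebra $\mathcal{O}(L)_\psi$ obtained by twisting the comultiplication by an invertible $2$-cochain $\psi:L\times L\to\mathbb{G}_m$, and the twist equation \eqref{twist} for $J$ translates into the $2$-cocycle condition on $\psi$. By \cite[Proposition 6.7]{g}, minimality of $J$ corresponds to non-degeneracy of $\psi$, i.e.\ to $\mathcal{O}(L)_\psi$ being a simple coalgebra, equivalently $\text{Corep}(\mathcal{O}(L)_\psi)\simeq\Vect$. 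This correspondence is equivariant under conjugation, so it descends to the desired bijection on conjugacy classes of triples.

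I expect the main obstacle to lie in the well-definedness and naturality of $(1)\leftrightarrow(2)$: one must verify that \emph{isomorphic} triangular Hopf algebras in $\mathcal{T}$ yield \emph{conjugate} triples, and conversely. This hinges on the canonicity of $G$ from the first step together with the functoriality of the minimal triangular structure under Hopf-algebra isomorphisms, the delicate point being to match the residual gauge freedom in the untwisting $J$ precisely with the conjugation action of $\Aut(G)$ on the pairs $(L,J_0)$, so that the equivalence relations on all three sides line up.
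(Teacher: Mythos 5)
Your proposal is correct and follows essentially the same route as the paper, which gives no detailed proof but observes that the theorem follows by combining Corollary \ref{symchevprop} (specialized to $\epsilon=1$, giving $(H,R)\cong\bigl((kG)^J,J_{21}^{-1}J\bigr)$) with \cite[Corollary 6.3 \& Proposition 6.7]{g}, exactly the two ingredients you use for the bijections $(1)\leftrightarrow(2)$ and $(2)\leftrightarrow(3)$. Your elaboration of the minimal-part construction, the duality between minimal twists and non-degenerate $2$-cocycles, and the matching of equivalence relations via $\Aut(G)$-conjugation simply fills in details the paper leaves implicit (cf.\ its remark that the correspondence is $H=kG^J$ and that $\psi$ determines a rank-$1$ module category, hence a twist supported on $L$).
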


\begin{remark}
The following hold:
\begin{enumerate}
\item
The correspondence between (1) and (2) in Theorem \ref{classtrhas} is given by $H=kG^J$. A non-degenerate $2$-cocycle $\psi$ on $L$ as in Theorem \ref{classtrhas}(3) determines a module category over $\Rep(G)$ of rank $1$, i.e., a tensor structure on the forgetful functor $\Rep(G)\to \Vect$, thus a twist $J$ for $kG$ supported on $L$. 
\item
Theorem \ref{classtrhas} is the analog of \cite[Theorem 5.1]{eg4} for finite dimensional triangular Hopf algebras in $\mathcal{T}$ (e.g., finite dimensional triangular local Hopf algebras).
\item
Corollary \ref{symchevprop} may be used to extend Theorem \ref{classtrhas} to arbitrary finite dimensional triangular Hopf algebras with the Chevalley property over $k$, once an extension of \cite[Corollary 6.3 \& Proposition 6.7]{g} to the supercase is available. We plan to achieve this in a future publication.
\end{enumerate}
\end{remark}

\section{Sweedler cohomology of restricted enveloping algebras and associators for their duals}

\subsection{Truncated exponentials and logarithms} Let $A$ be a finite dimensional local commutative algebra over $k$, and let $I$ be its maximal ideal. Assume that $x^p=0$ for all $x\in I$. 

\begin{proposition}\label{sertrexp}
The following hold:
\begin{enumerate}
\item
Let $n\ge 2$. Then there is a unique homomorphism of unipotent algebraic groups $$E: (I^{\otimes n},+)\to (1+I^{\otimes n},\times)$$ such that for decomposable\footnote{Here by decomposable we mean a tensor of the form $a_1\otimes\cdots\otimes a_n$, $a_i\in I$.}  $T$ we have $$E(T)=\sum_{j=0}^{p-1}\frac{T^j}{j!}.$$
\item 
Let $n\ge 2$. Then the homomorphism $E$ is an isomorphism, whose inverse $L:=E^{-1}$ satisfies
$$L(S)=\sum_{j=1}^{p-1}(-1)^{j-1}\frac{(S-1)^j}{j}$$ for every $S=E(T)$ with $T$ decomposable.
\item
Let $n\ge 3$, and let $A_i:=I^{\ot (i-1)}\ot A\ot I^{\ot (n-i)}$ for every $1\le i\le n$. Then $E$ and $L$ can be extended to homomorphisms $$E: A_1+\cdots+A_n\to (1+A_1)\cdots(1+A_n)$$
and
$$L: (1+A_1)\cdots(1+A_n)\to A_1+\cdots+A_n,$$
which are inverse to each other.
\end{enumerate} 
\end{proposition}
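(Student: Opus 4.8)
The plan is to handle all three parts through one mechanism: define $E$ on \emph{decomposable} tensors by the truncated exponential and extend by multiplicativity, the crux being that this extension is well defined. Write $R:=I^{\otimes n}$. First I would record that $R$ is a finite-dimensional commutative nilpotent (non-unital) algebra in which $x^p=0$ for every $x$: the Frobenius $x\mapsto x^p$ is additive in characteristic $p$, $R$ is spanned by decomposables $a_1\otimes\cdots\otimes a_n$, and such a decomposable satisfies $(a_1\otimes\cdots\otimes a_n)^p=a_1^p\otimes\cdots\otimes a_n^p=0$ since each $a_i\in I$ has $a_i^p=0$; additivity then gives $x^p=0$ for all $x\in R$. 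Consequently $(R,+)$ and $(1+R,\times)$ are commutative unipotent algebraic groups of the same dimension, both annihilated by $p$ (for $(R,+)$ because $\mathrm{char}\,k=p$, and for $(1+R,\times)$ because $(1+r)^p=1+r^p=1$). Throughout, for a single nilpotent $D$ with $D^p=0$ I write $\exp_{<p}(D):=\sum_{j=0}^{p-1}D^j/j!$ and $\log_{<p}(1+D):=\sum_{j=1}^{p-1}(-1)^{j-1}D^j/j$, which are legitimate since all denominators are prime to $p$.

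For part (1), recall that $(R,+)$ is the quotient of the free abelian group on the symbols $[a_1,\dots,a_n]$, $a_i\in I$, by the multilinearity relations, and that these relations are generated by additivity in a single slot (scalars may be absorbed into a slot and impose no further condition). Hence a group homomorphism $E\colon(R,+)\to(1+R,\times)$ with $E(a_1\otimes\cdots\otimes a_n)=\exp_{<p}(a_1\otimes\cdots\otimes a_n)$ exists as soon as the truncated exponential respects additivity in one slot, i.e. as soon as $\exp_{<p}(u+v)=\exp_{<p}(u)\exp_{<p}(v)$ whenever $u,v$ are decomposables agreeing in every slot but the $j$-th. This is the heart of the argument. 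The difference equals $\sum_{a,b<p,\,a+b\ge p}u^av^b/(a!\,b!)$, and for each such term $u^av^b$ is a decomposable whose $m$-th slot, for any $m\ne j$, is a power $a_m^{a+b}$ of an element of $I$ with exponent $a+b\ge p$, hence zero. Since $n\ge2$ there is at least one slot $m\ne j$, so every term vanishes and the relation holds. Writing $R$ in a basis of decomposables $D_1,\dots,D_d$ shows $E(\sum t_iD_i)=\prod_i\exp_{<p}(t_iD_i)$ is polynomial in the $t_i$, so $E$ is a morphism of algebraic groups; uniqueness is automatic because the decomposables generate $(R,+)$.

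For part (2), I would note that on decomposables $\exp_{<p}(D)\equiv 1+D\pmod{R^2}$, and since $E$ is a homomorphism the same congruence holds on all of $R$; thus $T\mapsto E(T)-1$ has the form $T+(\text{strictly higher radical filtration})$ on the nilpotent algebra $R$, and such a map is bijective with inverse again of this triangular (hence polynomial) form. Being a bijective homomorphism with polynomial inverse, $E$ is an isomorphism of algebraic groups; set $L:=E^{-1}$. For $S=E(T)$ with $T$ decomposable one has $L(S)=T$, and the identity $\log(\exp(x))=x$ of formal power series, valid modulo $x^p$ and hence exactly here because $T^p=0$, gives $T=\log_{<p}(E(T))=\log_{<p}(S)$, which is precisely the asserted formula for $L$.

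For part (3), put $R':=A_1+\cdots+A_n$. Each $A_i$ is an ideal of $A^{\otimes n}$ contained in the maximal ideal (any decomposable in $A_i$ has an $I$-entry, using $n\ge2$, so has zero augmentation), whence $R'$ is nilpotent; the same Frobenius-additivity computation gives $x^p=0$ on $R'$, while $\bigcap_iA_i=I^{\otimes n}$ and $(1+A_1)\cdots(1+A_n)=1+R'$ because the $A_i$ commute and $A_iA_j\subseteq A_i\cap A_j$. I would then repeat the construction of parts (1)–(2) verbatim with $R'$ in place of $R$, building $E$ from the decomposables lying in the various $A_i$. The one point requiring $n\ge3$ is the slot computation: for $u,v$ decomposables in a fixed $A_i$ differing in slot $j$, the cross terms $u^av^b$ ($a+b\ge p$) must be killed by a spectator slot $m\notin\{i,j\}$ whose entry lies in $I$, and such an $m$ exists precisely when $n\ge3$. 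The resulting $E$ restricts to the map of part (1) on $I^{\otimes n}=\bigcap_iA_i$ (both are homomorphisms agreeing on the decomposables generating $I^{\otimes n}$), and the filtration argument of part (2) again produces the inverse $L$. The main obstacle, in every part, is exactly this single-slot compatibility computation, which is what forces the hypotheses $n\ge2$ and $n\ge3$.
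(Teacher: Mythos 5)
Your proposal is correct and follows essentially the same route as the paper: part (1) by reducing to one-slot additivity of decomposables and killing the cross terms $u^av^b$ with $a+b\ge p$ via $x^p=0$ in a spectator slot (the paper's binomial reindexing is the same computation), part (2) by unitriangularity of $E$ with respect to the radical filtration (the paper's $\gr(E)=\id$) together with the truncated $\log\circ\exp$ identity lifted from characteristic zero, and part (3) by extending to the $A_i$ using that $n\ge 3$ guarantees a spectator slot with entry in $I$ (the paper's identification $A_i\cong I^{\otimes(n-1)}\oplus I^{\otimes n}$ amounts to the same thing). Your explicit verification that $(1+A_1)\cdots(1+A_n)=1+A_1+\cdots+A_n$ via $A_iA_j\subseteq A_i\cap A_j$ is a welcome detail the paper leaves implicit, but it does not change the method.
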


\begin{proof} 
(1) Recall that $I^{\otimes n}$ is the free abelian group generated by decomposable tensors $T$ modulo the relations 
$a\otimes (b_1+b_2)\otimes c=a\otimes b_1\otimes c+a\otimes b_2\otimes c$, where $a\in I^{\otimes (i-1)}$, $b_1,b_2\in I$ and $c\in I^{\otimes (n-i)}$ are decomposable, $1\le i\le n$. So our job is to show that 
$$E(a\otimes (b_1+b_2)\otimes c)=E(a\otimes b_1\otimes c)E(a\otimes b_2\otimes c).$$ 
Indeed, we have
\begin{eqnarray*}
\lefteqn{E(a\otimes (b_1+b_2)\otimes c)=\sum_{j=0}^{p-1}\frac{\left(a\otimes (b_1+b_2)\otimes c \right)^j }{j!}}\\
& = & \sum_{j=0}^{p-1}\sum_{l=0}^{j}\binom{j}{l}\frac{a^j\otimes b_1^l b_2^{j-l}\otimes c^j}{j!}= \sum_{j=0}^{p-1}\sum_{l=0}^{j}\frac{a^j\otimes b_1^lb_2^{j-l}\otimes c^j}{(j-l)!l!}\\
& = & \sum_{l=0}^{p-1}\sum_{i=0}^{p-1-l}\frac{a^{i+l}\otimes b_1^lb_2^{i}\otimes c^{i+l}}{i!l!} = \sum_{l=0}^{p-1}\sum_{i=0}^{p-1}\frac{\left(a\otimes b_1\otimes c \right)^l}{l!}\frac{\left(a\otimes b_2\otimes c \right)^i}{i!}\\
& = & E(a\otimes b_1\otimes c)E(a\otimes b_2\otimes c),
\end{eqnarray*}
as desired. (Note that the equation before the last one is justified by our assumption that $x^p=0$ for all $x\in I$.)

(2) We have to show that $$\sum_{j=1}^{p-1}(-1)^{j-1}\frac{(E(T)-1)^j}{j}=T$$ for every decomposable $T$. To prove this, it suffices to check it in the ring $k[T]/(T^p)$. But for this it is enough to check this identity in $\mathbb{Z}[1/(p-1)!][T]/(T^p)$ and then specialize to $k[T]/(T^p)$ by modding out by $p$. But for this, in turn, it suffices to establish the identity in $\mathbb{Q}[T]/(T^p)$ (as $\mathbb{Z}[1/(p-1)!][T]/(T^p)\subset \mathbb{Q}[T]/(T^p)$). Finally, in $\mathbb{Q}[T]/(T^p)$ this identity follows by truncation of the corresponding identity in $\mathbb{Q}[[T]]$ for the usual (non-truncated) $\text{Exp}$ and $\text{Log}$.

Finally, it is clear that $E$ is an isomorphism since $\gr(E)=\id$.

(3) Since $A=k\oplus I$, we can identify $A_i$ with $I^{\ot (n-1)}\oplus I^{\ot n}$ for every $1\le i\le n$, and hence extend $E$ to $A_i$ in an obvious way (as $n\ge 3$). It is easy to see that we obtain an isomorphism $E: A_i\to 1+A_i$, which means that we have an isomorphism $E: A_1+\cdots+A_n\to (1+A_1)\cdots(1+A_n)$, as desired. The proof for $L$ is similar.
\end{proof} 

\begin{remark}\label{en=1}
\begin{enumerate}
\item
Proposition \ref{sertrexp} is false for $n=1$, as $E(a+b)\ne E(a)E(b)$ if $a,b\in I$ and $E$ is defined by the above formula. Moreover the groups $I$ and $1+I$ are in general not isomorphic. For example, if $p=2$ and $A=k[x]/x^3$ then $I=k^2$ with usual addition, while $1+I=k^2$ with composition $$(a_1,b_1)*(a_2,b_2)=(a_1+a_2,b_1+b_2+a_1a_2),$$ which are not isomorphic.

Similarly, the assumption $n\ge 3$ is crucial in the extension of $E$ to $A_1+\cdots+A_n$.
\item
If $T$ is not decomposable then $E(T)$ is not given by the above formula, in general. For example, if $p=2$ then one has $E(T)=1+T$ for decomposable $T$, however $$E(T+U)=(1+T)(1+U)=1+T+U+TU,$$ which is in general not equal to $1+T+U$ (where $T,U$ are decomposable).
\end{enumerate}
\end{remark} 

\begin{corollary} 
Let $V$ be a vector space over $k$, and let $A$ be the Hopf algebra $SV/V^p$. Then we have a natural group homomorphism  
$E: V^{\otimes n}\to Z^n(A^*,\mathbb{G}_m)$ for every $n>1$, where $Z^n$ is the space of $n$-cocycles.
\end{corollary}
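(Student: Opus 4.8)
The plan is to unwind the definitions and show that the ``natural group homomorphism'' is nothing more than the map $E$ of Proposition \ref{sertrexp}, reinterpreted through the standard dictionary between multiplicative $n$-cochains on a cocommutative Hopf algebra $A$ (with values in $\mathbb{G}_m$) and invertible elements of $(A^*)^{\ot n}$. Concretely, for $A = SV/V^p$ the dual $A^*$ is a finite-dimensional local commutative algebra whose maximal ideal $I$ satisfies $x^p = 0$ for all $x \in I$ (this is exactly the hypothesis under which Proposition \ref{sertrexp} applies, since $A^*$ is a truncated divided-power/polynomial algebra dual to the restricted symmetric algebra). Thus an $n$-cochain on $A^*$ valued in $\mathbb{G}_m$ is by definition an invertible element of $(A^*)^{\ot n}$ restricting to $1$ under each counit, and the group of such cochains is precisely $(1 + I^{\ot n})$ together with the correction terms, i.e. exactly the target group of $E$ in Proposition \ref{sertrexp}(1).

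First I would make the identification of the cochain group explicit: recall that the bar complex computing $H^\bullet(A^*,\mathbb{G}_m)$ has $n$-cochains equal to the (convolution-)invertible, counit-normalized elements of $(A^*)^{\ot n}$, and these form an abelian group under the pointwise product because $A$ is cocommutative. Next I would observe that $V^{\ot n}$ sits inside $(A^*)^{\ot n}$ as an additive subgroup (indeed $V \subset A$ is the primitive part, so $V^* \subset A^*$ lands in $I$, and each decomposable tensor in $V^{\ot n}$ is a decomposable element of $I^{\ot n}$ in the sense of the footnote to Proposition \ref{sertrexp}). Restricting the isomorphism $E\colon (I^{\ot n},+) \to (1+I^{\ot n},\times)$ of Proposition \ref{sertrexp}(1) to $V^{\ot n}$ therefore gives a group homomorphism $E\colon V^{\ot n} \to (1+I^{\ot n})$, and I would then check that its image consists of counit-normalized invertible elements, hence genuine $n$-cochains.

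The real content is the claim that the image lands not merely in cochains but in the subgroup $Z^n(A^*,\mathbb{G}_m)$ of \emph{cocycles}. For this I would write down the simplicial (bar) differential $\partial$ on multiplicative cochains and verify that $\partial \circ E = 1$ on decomposable generators of $V^{\ot n}$; because $E$ is a homomorphism and the decomposable tensors generate $V^{\ot n}$ as a group, it then suffices to check the cocycle identity on a single decomposable $T = v_1 \ot \cdots \ot v_n$ with all $v_i \in V$ primitive. Here the key simplification is that each face map of the differential either inserts a counit (contributing $1$ by normalization) or applies the comultiplication of $A^*$; since the $v_i$ are primitive in $A$, the dual structure makes the alternating product of these faces collapse, and the truncated-exponential formula $E(T)=\sum_{j=0}^{p-1}T^j/j!$ matches the usual exponential closely enough that the classical cocycle identity for $\exp$ survives truncation modulo $p$ (exactly as in the truncation argument used to prove Proposition \ref{sertrexp}(2)). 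I expect \textbf{this cocycle verification to be the main obstacle}: one must confirm that applying the comultiplication \eqref{comultp}-style coproduct of $A^*$ to the primitive generators produces precisely the telescoping needed, and that no denominator in the truncated exponential introduces a factor of $p$ in degrees below $p$. Once the cocycle identity holds on decomposables, functoriality in $V$ (naturality under linear maps $V \to V'$, which induce Hopf maps $SV/V^p \to SV'/V'^p$ and hence compatible maps on cochains) is immediate from the explicit formula for $E$, giving the asserted natural group homomorphism $E\colon V^{\ot n} \to Z^n(A^*,\mathbb{G}_m)$ for every $n>1$.
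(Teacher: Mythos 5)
Your overall strategy---restrict the truncated exponential of Proposition \ref{sertrexp} to tensors of primitive elements, reduce the multiplicative cocycle identity to decomposable tensors via the homomorphism property of $E$, and use primitivity to collapse the bar differential---is essentially the paper's argument. However, your setup contains a genuine variance error that breaks the map as you define it. You apply Proposition \ref{sertrexp} to the dual algebra $A^*$ and declare the multiplicative $n$-cochains ``on $A^*$'' to be normalized invertible elements of $(A^*)^{\ot n}$; you then claim that $V^{\ot n}$ sits inside $(A^*)^{\ot n}$, justifying this by the inclusion $V^*\subset A^*$. That justification embeds $(V^*)^{\ot n}$, not $V^{\ot n}$: the primitive part of $A^*$ is $V^*$, and there is no natural copy of $V$ inside $A^*$. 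As written, your construction produces a map $(V^*)^{\ot n}\to 1+I^{\ot n}$ with $I$ the maximal ideal of $A^*$, i.e., the statement with $V$ replaced by $V^*$ (the dual group scheme); symptomatically, your closing naturality claim would then be contravariant in $V$, contradicting the covariant assertion of the corollary.

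The correct reading---and the paper's---is that cochains for the (Sweedler-type) cohomology of the Hopf algebra $A^*$ are convolution-invertible functionals on $(A^*)^{\ot n}$, i.e., invertible elements of $A^{\ot n}=\Hom((A^*)^{\ot n},k)$, so $Z^n(A^*,\mathbb{G}_m)\subset A^{\ot n}$. One therefore applies Proposition \ref{sertrexp} to $A$ itself: its maximal ideal $I$ is generated by $V$, and $x^p=0$ for every $x\in I$ by additivity of the Frobenius, so $E\colon V^{\ot n}\to 1+I^{\ot n}\subset A^{\ot n}$ is a group homomorphism landing in normalized invertible cochains, with no dualization needed. With that repair, your cocycle verification goes through and coincides with the paper's mechanism: the paper observes that since $V=P(A)$, every $v\in V^{\ot n}$, viewed as a functional on $(A^*)^{\ot n}$, is an additive (Hochschild) $n$-cocycle of $A^*$ with trivial coefficients, and then the homomorphism property of $E$ (exactly as in the proof of Theorem \ref{ncocy}(2)) converts the additive cocycle identity into the multiplicative one; your reduction to a single decomposable $v_1\ot\cdots\ot v_n$ of primitives is the same computation organized pointwise, and your worry about denominators is unfounded since $j!$ is invertible for $j<p$. (A small further inaccuracy: the normalized invertible cochains form a group strictly larger than $1+I^{\ot n}$, but this is harmless, since you only need the image of $E$ to land in cochains.)
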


\begin{proof}
Since the maximal ideal $I$ of $A$ is generated by $V$, we have $I^p=0$. Thus by Proposition \ref{sertrexp}, we have a  group homomorphism $E: V^{\otimes n}\to 1+I^{\otimes n}$ for every $n>1$. 
Since $V=P(A)$, it follows that every element $v$ in $V^{\otimes n}$, viewed as an element in $\Hom((A^{\ot n})^*,k)$, is a Hochschild $n$-cocycle of $A^*$ with coefficients in $k$. Thus, using Proposition \ref{sertrexp}, it is straightforward to verify that $E(v)$ belongs to $Z^n(A^*,\mathbb{G}_m)$ for every $n>1$, as desired.
\end{proof}

\subsection{Sweedler cohomology of $u(\g)$}
We will now use Proposition \ref{sertrexp} to compute the Sweedler cohomology \cite{s2} of restricted enveloping algebras.

Let $\g$ be a finite dimensional restricted $p$-Lie algebra over $k$, and let $\Gamma$ be the finite group scheme over $k$ such that $\mathcal{O}(\Gamma)=u(\g)^*$. Then $\mathcal{O}(\Gamma)$ is a local commutative algebra such that $x^p=0$ for every $x$ in its augmentation ideal $I$.

Recall that by definition, we have $$H^{n}(\Gamma,\mathbb{G}_m)=H_{\text{Sw}}^{n}(u(\g)),$$ where $H_{\text{Sw}}^{n}(u(\g)):=H_{\text{Sw}}^{n}(u(\g),k)$ is the $n$-th Sweedler cohomology group of the cocommutative Hopf algebra $u(\g)=k\Gamma$ with coefficients in the trivial $u(\g)$-module algebra $k$ \cite{s2}. Thus, the following result gives the group $H_{\text{Sw}}^{n}(u(\g))$ in terms of the much better understood group $H^n(\Gamma,\mathbb{G}_a)$.

Recall \cite[Theorem 4.3]{s2} that the Sweedler cohomology of usual and normalized cochains in positive degree is the same. Thus, in computing Sweedler cohomology and Hochschild cohomology, we may use only normalized cochains.

\begin{theorem}\label{ncocy}
Let $\Gamma$ be as above. Then the following hold:
\begin{enumerate}
\item
$H^0(\Gamma,\mathbb{G}_m)=k^{\times}$ and $H^1(\Gamma,\mathbb{G}_m)=\Hom(\Gamma,\mathbb{G}_m)$.
\item
For every $n\ge 2$, the assignment 
\begin{equation*}
E:Z^n(\Gamma,\mathbb{G}_a)\to Z^n(\Gamma,\mathbb{G}_m),\,\,\phi\mapsto E(\phi),
\end{equation*}
is a group isomorphism, whose inverse is given by
\begin{equation*}
L:Z^n(\Gamma,\mathbb{G}_m)\to Z^n(\Gamma,\mathbb{G}_a),\,\,\Phi\mapsto L(\Phi).
\end{equation*}
\item
For every $n\ge 3$, the assignment 
\begin{equation*}
{\bf E}:H^n(\Gamma,\mathbb{G}_a)\to H^n(\Gamma,\mathbb{G}_m),\,\,[\phi]\mapsto [E(\phi)],
\end{equation*}
is a well defined group isomorphism, whose inverse is given by
\begin{equation*}
{\bf L}:H^n(\Gamma,\mathbb{G}_m)\to H^n(\Gamma,\mathbb{G}_a),\,\,[\Phi]\mapsto [L(\Phi)].
\end{equation*}
\end{enumerate}
\end{theorem}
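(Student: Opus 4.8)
\textbf{Proof proposal for Theorem \ref{ncocy}.}

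The plan is to reduce the three parts to the properties of the truncated exponential and logarithm maps $E$ and $L$ established in Proposition \ref{sertrexp}, applied to the local commutative algebra $A:=\mathcal{O}(\Gamma)=u(\g)^*$ with maximal ideal $I$ satisfying $I^p=0$. The guiding principle is that Sweedler $n$-cochains on $\Gamma$ with values in $\mathbb{G}_m$ are (normalized) invertible elements of $(A^{\ot n})$ lying in $1+I^{\ot n}$, while cochains with values in $\mathbb{G}_a$ are the corresponding additive elements of $I^{\ot n}$; the maps $E$ and $L$ intertwine these two descriptions. First I would make precise that under the identification $H^n(\Gamma,\mathbb{G}_m)=H^n_{\text{Sw}}(u(\g))$, an $n$-cochain valued in $\mathbb{G}_m$ is a convolution-invertible normalized map $(u(\g))^{\ot n}\to k$, hence an invertible element of $1+I^{\ot n}$ where $I=\Rad(A)$, and similarly that a $\mathbb{G}_a$-valued cochain is an element of $I^{\ot n}$. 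The additive and multiplicative bar differentials then correspond under $E$ and $L$ precisely because $E$ is a group homomorphism $(I^{\ot n},+)\to(1+I^{\ot n},\times)$ on each tensor factor of the bar complex.

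Part (1) is the low-degree bookkeeping: $H^0(\Gamma,\mathbb{G}_m)=k^\times$ is immediate, and $H^1(\Gamma,\mathbb{G}_m)=\Hom(\Gamma,\mathbb{G}_m)$ is the standard identification of degree-one Sweedler cohomology with the group of grouplike (multiplicative) characters, which I would simply cite. Part (2) is the heart of the argument at the level of cocycles. Here I would verify that $E$ carries $Z^n(\Gamma,\mathbb{G}_a)$ bijectively onto $Z^n(\Gamma,\mathbb{G}_m)$ for $n\ge 2$. The key point is that the coboundary operator commutes with $E$: if $\phi\in I^{\ot n}$ is an additive $n$-cocycle, one must check that $E(\phi)\in 1+I^{\ot n}$ is a multiplicative $n$-cocycle, and conversely. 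For $n\ge 2$ each face map in the bar complex inserts a comultiplication or a unit/counit, so each term $d_i$ of the differential lands in one of the ideals $A_i=I^{\ot(i-1)}\ot A\ot I^{\ot(n-i)}$ of $A^{\ot(n+1)}$; by Proposition \ref{sertrexp}(3) the map $E$ extends multiplicatively and compatibly to the sum $A_1+\cdots+A_{n+1}$, so that $E(d^{+}\phi)=d^{\times}E(\phi)$ as an identity of homomorphisms. Since $L=E^{-1}$, bijectivity on cocycles and the group isomorphism statement follow at once. I would note that the hypothesis $n\ge 2$ (so that the target of the differential has $n+1\ge 3$ tensor slots) is exactly what makes Proposition \ref{sertrexp}(3) applicable.

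Part (3) upgrades the cocycle isomorphism of Part (2) to cohomology for $n\ge 3$. The plan is to show that $E$ and $L$ descend to the quotients by coboundaries, i.e. that $E(B^n(\Gamma,\mathbb{G}_a))=B^n(\Gamma,\mathbb{G}_m)$. Since an additive $n$-coboundary has the form $d^{+}\psi$ for an $(n-1)$-cochain $\psi\in I^{\ot(n-1)}$, and a multiplicative $n$-coboundary has the form $d^{\times}\Psi$ for $\Psi\in 1+I^{\ot(n-1)}$, the same compatibility $E\circ d^{+}=d^{\times}\circ E$ from Part (2) gives $E(d^{+}\psi)=d^{\times}E(\psi)$, provided $E$ is defined on $(n-1)$-cochains; this requires $n-1\ge 2$, i.e. $n\ge 3$, which is precisely the stated hypothesis. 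Thus coboundaries map to coboundaries in both directions and $\mathbf{E}$, $\mathbf{L}$ are mutually inverse isomorphisms on cohomology. The main obstacle, and the only place demanding genuine care rather than formal manipulation, is verifying the intertwining identity $E\circ d^{+}=d^{\times}\circ E$ rigorously: $E$ is not additive on a single tensor factor (Remark \ref{en=1} shows it fails for $n=1$ and on non-decomposable tensors), so one cannot naively apply $E$ term by term to the alternating sum defining the differential. The resolution is that the individual face terms $d_i\phi$ live in distinct ideals $A_i$ whose pairwise products vanish appropriately, so that $E$ of their sum factors as the product of $E$ on each, which is exactly the content of Proposition \ref{sertrexp}(3); I would organize the verification around this decomposition into the $A_i$, and this is where I expect to spend most of the effort.
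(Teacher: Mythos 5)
Your proposal is correct and follows essentially the same route as the paper's proof: part (1) by citing standard identifications, and parts (2) and (3) by reducing the cocycle and cohomology statements to the homomorphism property of the truncated exponential on sums of the ideals $A_i$ (Proposition \ref{sertrexp}(3)) together with the compatibility of $E$ with the face maps $1\ot(-)$, $(-)\ot 1$ and $\id^{\ot(i-1)}\ot\Delta\ot\id^{\ot(n-i)}$, including the same explanation of why $n\ge 2$ (resp.\ $n\ge 3$, so that $E$ intertwines the differentials on $(n-1)$-cochains) is exactly what is needed. One harmless imprecision: the inner faces land in $A_i+A_{i+1}$ rather than a single ideal $A_i$ (since $\Delta(I)\subset I\ot A+A\ot I$), and the extension of $E$ in Proposition \ref{sertrexp}(3) rests on the homomorphism property rather than on vanishing of pairwise products, but your argument only uses that the full differential lies in $A_1+\cdots+A_{n+1}$, which is correct.
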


\begin{proof}
(1) Follows from the definitions.

(2) For simplicity we will prove the claim for $n=2$, the proof for $n>2$ being similar. Let $\phi$ be an element in $Z^2(\Gamma,\mathbb{G}_a)$. Since $x^p=0$ for every $x$ in the augmentation ideal $I$ of $\mathcal{O}(\Gamma)$ it follows from Proposition \ref{sertrexp} that $\Phi:=E(\phi)$ is a well-defined invertible element of $\mathcal{O}(\Gamma)^{\ot 2}$. We have to show that $\Phi$ belongs to $Z^2(\Gamma,\mathbb{G}_m)$.

Indeed, since $$(\id \ot \Delta)(\phi)+1\ot \phi=(\Delta \ot \id)(\phi)+\phi \ot 1,$$ we have 
$$
E((\id \ot \Delta)(\phi)+1\ot \phi)=E((\Delta \ot \id)(\phi)+\phi \ot 1),
.$$
Hence by Proposition \ref{sertrexp}, we have 
\begin{equation}\label{help}
E((\id \ot \Delta)(\phi))E(1\ot \phi)=E((\Delta \ot \id)(\phi))E(\phi \ot 1).
\end{equation}
Now it is straightforward to verify that $$E(1\ot \phi)=1\ot \Phi,\,\,\,E(\phi \ot 1)=\Phi\ot 1,$$ and $$E((\id \ot \Delta)(\phi))=(\id \ot \Delta)(\Phi),\,\,\,E((\Delta \ot \id)(\phi))=(\Delta \ot \id)(\Phi).$$
Therefore, (\ref{help}) is equivalent to
$$
(\id \ot \Delta)(\Phi)(1\ot \Phi)=(\Delta \ot \id)(\Phi)(\Phi\ot 1)
.$$
Thus $\Phi$ belongs to $Z^2(\Gamma,\mathbb{G}_m)$, as desired.

The proof about $L$ is similar.

Finally, $E$ and $L$ are inverse to each other by Proposition \ref{sertrexp}.

(3) Let $\phi$ be an element in $Z^n(\Gamma,\mathbb{G}_a)$, and let $\Phi:=E(\phi)$. We have to show that $E(df)=d(E(f))$ for every $f\in I^{\ot (n-1)}$. Then we will have 
\begin{equation}\label{1more}
E(\phi +df)=E(\phi) E(df)=\Phi d(E(f)),
\end{equation}
as desired.

Indeed, (\ref{1more}) follows from our assumption that $n-1> 1$ and the commutativity of $\mathcal{O}(\Gamma)$. For simplicity we will prove it for $n=3$, the proof for $n>3$ being similar. 

So let $f\in I^{\ot 2}$, and set $F:=E(f)$. Then, using Proposition \ref{sertrexp}, we have 
\begin{eqnarray*}
\lefteqn{E(df)=E\left((\id \ot \Delta)(f)+1\ot f-(\Delta \ot \id)(f)-f \ot 1\right)}\\
& = & (\id \ot \Delta)(F)(1\ot F)(\Delta \ot \id)(F)^{-1}(F^{-1} \ot 1)=d(F),
\end{eqnarray*} 
as required.

Thus, ${\bf E}$ is a well-defined group homomorphism, as desired.

The proof that ${\bf L}$ is a well-defined group homomorphism is similar.

Finally, ${\bf E}$ and ${\bf L}$ are inverse to each other by Proposition \ref{sertrexp}. 
\end{proof}

\begin{remark}
\begin{enumerate}
\item 
The proof of Theorem \ref{ncocy}(2),(3) is similar to Sweedler's proof for usual enveloping algebras. In fact, in characteristic $p>0$ Sweedler uses truncated exponentials \cite[Theorem 4.3]{s2}.
\item
The proof of Theorem \ref{ncocy} works for any Hopf algebra quotient $H$ of $U(\g)$. 
\item
Note that $H^1(\Gamma,\mathbb{G}_m)\ncong H^1(\Gamma,\mathbb{G}_a)$.
\item
Theorem \ref{ncocy}(3) fails for $n=2$ since it may happen that
$E(df)\ne d(E(f))$ for $f\in I$ (see Remarks \ref{en=1}, \ref{zpr}).
\end{enumerate}
\end{remark}

Let $G:=\mathbb{Z}/p\mathbb{Z}$.
Recall that $\mu_p=G^D$, i.e., $\mathcal{O}(\mu_p)=\mathcal{O}(G)^*=kG$. We have $\mathcal{O}(G)=u(\g)$, where $\g$ is the abelian restricted $p$-Lie algebra with basis $x$ over $k$ such that $x^{[p]}=x$. Hence, Theorem \ref{ncocy} applies to $\Gamma:=G^D=\mu_p$.

\begin{corollary}\label{ncocymup}
Let $B:=\mathcal{O}(\mathbb{Z}/p\mathbb{Z})$ be the Hopf algebra of functions on $\mathbb{Z}/p\mathbb{Z}$ with values in $k$. Then the Sweedler cohomology of $B$ is as follows: $H^1_{\rm Sw}(B)=\mathbb{Z}/p\mathbb{Z}$, and 
$H_{\text{Sw}}^i(B)=0$ for every $i\ge 2$. 
\end{corollary}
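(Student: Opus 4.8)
The plan is to use the identification $H^i_{\mathrm{Sw}}(B)=H^i(\mu_p,\mathbb{G}_m)$ recorded just before the statement (recall $B=u(\g)$ with $\g$ one-dimensional and $x^{[p]}=x$, and $\Gamma=G^D=\mu_p$), and then to compute the right-hand side in three ranges: $i=1$, $i\ge 3$, and the exceptional degree $i=2$. For $i=1$ I would invoke Theorem \ref{ncocy}(1), giving $H^1(\mu_p,\mathbb{G}_m)=\Hom(\mu_p,\mathbb{G}_m)$. Since $\mu_p$ is the Cartier dual of $\mathbb{Z}/p\mathbb{Z}$, this character group is the group of grouplike elements $\{1,t,\dots,t^{p-1}\}$ of $\mathcal{O}(\mu_p)=k[t]/(t-1)^p$, which is cyclic of order $p$; hence $H^1_{\mathrm{Sw}}(B)=\mathbb{Z}/p\mathbb{Z}$.

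For $i\ge 3$ I would apply the isomorphism ${\bf E}$ of Theorem \ref{ncocy}(3) to reduce to the additive cohomology $H^i(\mu_p,\mathbb{G}_a)$, and then observe that the latter vanishes for all $i\ge 1$. Indeed $\mu_p$ is linearly reductive — equivalently $k\mu_p=B\cong k^{\,p}$ is semisimple — so the trivial module $k$ is projective over $B$ and $H^i(\mu_p,\mathbb{G}_a)=\Ext^i_{\mu_p}(k,k)=0$ for every $i\ge 1$. This gives $H^i_{\mathrm{Sw}}(B)=0$ for $i\ge 3$.

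The degree $i=2$ is the main obstacle, precisely because Theorem \ref{ncocy}(3) fails there (one has $E(df)\ne d(E(f))$ for a $1$-cochain $f$), so I cannot transport the vanishing of $H^2(\mu_p,\mathbb{G}_a)$ across $E$ directly. Instead I would argue at the level of cocycles, viewing the cochain groups as affine algebraic groups over $k$. By Theorem \ref{ncocy}(2), $E$ is an isomorphism $Z^2(\mu_p,\mathbb{G}_a)\xrightarrow{\ \sim\ }Z^2(\mu_p,\mathbb{G}_m)$, and since $H^2(\mu_p,\mathbb{G}_a)=0$ (the same input as above) we have $Z^2(\mu_p,\mathbb{G}_a)=B^2(\mu_p,\mathbb{G}_a)=d(I)$, where $I$ is the augmentation ideal of $\mathcal{O}(\mu_p)$. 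As $\mathcal{O}(\mu_p)$ has no nonzero primitives, $d\colon I\to Z^2(\mu_p,\mathbb{G}_a)$ is injective, so $E\circ d$ identifies $Z^2(\mu_p,\mathbb{G}_m)$ with the additive group $I\cong \mathbb{G}_a^{\,p-1}$; in particular $Z^2(\mu_p,\mathbb{G}_m)$ is a smooth connected unipotent group of dimension $p-1$. On the other hand $B^2(\mu_p,\mathbb{G}_m)$ is the image of $d\colon 1+I\to C^2$, whose kernel is exactly the finite group of grouplikes $\{t^j\}\cong\mathbb{Z}/p\mathbb{Z}$, so $B^2(\mu_p,\mathbb{G}_m)$ is a connected closed subgroup of $Z^2(\mu_p,\mathbb{G}_m)$ of the same dimension $p-1$. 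A connected closed subgroup of full dimension inside a smooth connected group is the whole group, whence $B^2(\mu_p,\mathbb{G}_m)=Z^2(\mu_p,\mathbb{G}_m)$ and $H^2(\mu_p,\mathbb{G}_m)=0$.

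I expect the dimension-comparison step for $i=2$ to be the delicate point: it requires treating the cochain groups as algebraic (group) schemes so that ``connected of full dimension implies everything'' applies, and it relies on $Z^2(\mu_p,\mathbb{G}_m)$ being reduced — which is exactly what the isomorphism $E$ with the additive $Z^2(\mu_p,\mathbb{G}_a)\cong\mathbb{G}_a^{\,p-1}$ furnishes. A more conceptual alternative for $i=2$ is to interpret $H^2(\mu_p,\mathbb{G}_m)$ as central extensions of $\mu_p$ by $\mathbb{G}_m$ and to invoke the vanishing of $\Ext^1(\mu_p,\mathbb{G}_m)$ together with that of the commutator pairings $\Hom(\mu_p,\mu_p^D)=\Hom(\mu_p,\mathbb{Z}/p\mathbb{Z})=0$; but the cocycle-level argument above stays closest to the $E$/$L$ machinery already developed in Theorem \ref{ncocy}.
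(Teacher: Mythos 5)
Your proposal is correct, and for $i=1$ and $i\ge 3$ it coincides with the paper's proof (character group of $\mu_p$ for $H^1$; the isomorphism ${\bf E}$ of Theorem \ref{ncocy}(3) plus vanishing of $H^i(\mu_p,\mathbb{G}_a)$ for the diagonalizable group scheme $\mu_p$ in degrees $\ge 3$). Where you genuinely diverge is the degree $i=2$: the paper does not argue at the cocycle level at all, but instead identifies $H^2_{\rm Sw}(B)$ with the group of gauge equivalence classes of twists for $\mathbb{Z}/p\mathbb{Z}$ and quotes the external classification \cite[Corollary 6.9]{g} to conclude it vanishes. Your replacement stays inside the $E$/$L$ machinery of Theorem \ref{ncocy} and Proposition \ref{sertrexp}: since $H^2(\mu_p,\mathbb{G}_a)=0$ and $\mathcal{O}(\mu_p)$ has no nonzero primitives, $d$ identifies the augmentation ideal $I$ with $Z^2(\mu_p,\mathbb{G}_a)$, so $E\circ d$ exhibits $Z^2(\mu_p,\mathbb{G}_m)$ as a smooth connected unipotent group of dimension $p-1$ (here you correctly use that $E$ is a homomorphism of unipotent algebraic groups, not just of abstract groups, which is exactly what Proposition \ref{sertrexp}(1) provides and what makes the reducedness point non-circular); meanwhile $d\colon 1+I\to Z^2(\mu_p,\mathbb{G}_m)$ is a homomorphism of algebraic groups with finite kernel $\{t^j\}\cong\mathbb{Z}/p\mathbb{Z}$, so its image $B^2(\mu_p,\mathbb{G}_m)$ is a closed connected subgroup of full dimension $p-1$, hence all of $Z^2$ by irreducibility. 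This is a legitimate and self-contained alternative: it trades the paper's brevity (one citation) for independence from \cite{g}, and it makes transparent why the failure of ${\bf E}$ to descend to cohomology in degree $2$ (Remark following Theorem \ref{ncocy}, and Remark \ref{zpr}) is harmless for $\mu_p$ specifically — the coboundaries, though not matched termwise by $E$, fill out the cocycle variety for dimension reasons. Note, though, that your argument is special to this example, whereas \cite[Corollary 6.9]{g} is also what powers the inductive step in Proposition \ref{ncocympgr} via Corollary \ref{ncocymup} as the base case, so the paper's choice of citation is not gratuitous.
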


\begin{proof}
Since $(\mathbb{Z}/p\mathbb{Z})^D$ is connected, $H^1_{\rm Sw}(B)=\Hom((\mathbb{Z}/p\mathbb{Z})^D,\mathbb{G}_m)=\mathbb{Z}/p\mathbb{Z}$.

Since $H_{\text{Sw}}^2(B)$ is the group of gauge equivalence classes of twists for $\mathbb{Z}/p\mathbb{Z}$, the claim that $H_{\text{Sw}}^2(B)=0$ follows from \cite[Corollary 6.9]{g}.

Finally, by Theorem \ref{ncocy}, $H^i(\mu_p,\mathbb{G}_a)\cong H^i(\mu_p,\mathbb{G}_m)$ for every $i\ge 3$. But it is well known that $H^i(\mu_p,\mathbb{G}_a)=0$ for every $i\ge 1$ (since $\mu_p$ is a diagonalizable group scheme).
\end{proof}

\begin{proposition}\label{ncocympgr} 
Let $G$ be a finite abelian $p$-group and $B:=\mathcal{O}(G)$ the Hopf algebra of functions on $G$ with values in $k$. Then the Sweedler cohomology of $B$ is as follows: $H^1_{\rm Sw}(B)=G$ and $H^i_{\rm Sw}(B)=0$ for $i\ge 2$. 
\end{proposition}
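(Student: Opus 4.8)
The plan is to reduce the computation to the cyclic case already settled in Corollary~\ref{ncocymup}, via Cartier duality together with a dévissage. As in that corollary, the starting observation is the identification $H^n_{\rm Sw}(\mathcal{O}(G))=H^n(G^D,\mathbb{G}_m)$, where $G^D=\mathrm{Diag}(G)$ is the Cartier dual of the constant group scheme attached to $G$; since $G$ is a finite abelian $p$-group, $G^D$ is a connected diagonalizable group scheme. The degree-one statement is then immediate: $H^1(G^D,\mathbb{G}_m)=\Hom(G^D,\mathbb{G}_m)$ is the character group of the diagonalizable group scheme $\mathrm{Diag}(G)$, which is canonically $G$.

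For the vanishing in degrees $\ge 2$ I would argue by induction on $|G|$, the base case $G=\mathbb{Z}/p\mathbb{Z}$ being exactly Corollary~\ref{ncocymup}. Choose a subgroup $\mathbb{Z}/p\mathbb{Z}\hookrightarrow G$ with quotient $\bar G$, and apply Cartier duality (which is exact and contravariant) to the sequence $0\to\mathbb{Z}/p\mathbb{Z}\to G\to\bar G\to 0$, obtaining a short exact sequence of group schemes $1\to \mathrm{Diag}(\bar G)\to G^D\to \mu_p\to 1$. Feeding this into the Lyndon--Hochschild--Serre spectral sequence $E_2^{i,j}=H^i(\mu_p,H^j(\mathrm{Diag}(\bar G),\mathbb{G}_m))\Rightarrow H^{i+j}(G^D,\mathbb{G}_m)$ and invoking the inductive hypothesis $H^j(\mathrm{Diag}(\bar G),\mathbb{G}_m)=0$ for $j\ge 2$, the only potentially nonzero rows are $j=0$ and $j=1$. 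The $j=0$ row is $H^i(\mu_p,\mathbb{G}_m)$, which vanishes for $i\ge 2$ by the base case; the $j=1$ row is $H^i(\mu_p,\bar G)$ with $\bar G$ a finite \'etale (constant) module, which vanishes for $i\ge 1$ because $\mu_p$ is infinitesimal (connected). Hence every $E_2^{i,j}$ with $i+j\ge 2$ vanishes, giving $H^{n}(G^D,\mathbb{G}_m)=0$ for $n\ge 2$. Conceptually this is just the statement that the diagonalizable, hence linearly reductive, group scheme $G^D$ has no higher cohomology with $\mathbb{G}_m$-coefficients.

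The main obstacle is that, unlike in Corollary~\ref{ncocymup}, one cannot invoke Theorem~\ref{ncocy} to pass from $\mathbb{G}_m$- to $\mathbb{G}_a$-coefficients: that comparison rests on the truncated exponential of Proposition~\ref{sertrexp}, which requires $x^p=0$ on the augmentation ideal of $\mathcal{O}(G^D)=kG$, and this holds only when $G$ has exponent $p$. For $G$ of higher exponent (already for $G=\mathbb{Z}/p^2\mathbb{Z}$, where $G^D=\mu_{p^2}$ and $(g-1)^p\ne 0$) the cyclic factors are genuinely not restricted enveloping algebras, so the degree $\ge 3$ argument of Corollary~\ref{ncocymup} does not transfer. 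The dévissage above is designed precisely to circumvent this, and the delicate points to verify are the availability of the Lyndon--Hochschild--Serre spectral sequence for Hochschild cohomology of group schemes with $\mathbb{G}_m$-coefficients, the triviality of the $\mu_p$-action on $H^1(\mathrm{Diag}(\bar G),\mathbb{G}_m)=\bar G$ (which holds because the extension of abelian group schemes is central), and the vanishing $H^{\ge 1}(\mu_p,\bar G)=0$ for finite constant coefficients. In degree one one may bypass the spectral sequence entirely and read off $H^1=G$ from the character computation above.
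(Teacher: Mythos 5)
Your proposal is correct and is essentially the paper's own proof: induction on $|G|$ with base case Corollary \ref{ncocymup}, the Lyndon--Hochschild--Serre spectral sequence applied to a Cartier-dualized index-$p$ dévissage, and the same three vanishing inputs (the base case for the $\mu_p$-row, the inductive hypothesis, and the vanishing of higher cohomology of a connected group scheme with discrete coefficients). The only difference is cosmetic: you dualize a subgroup $\mathbb{Z}/p\mathbb{Z}\subset G$, making $\mu_p$ the quotient in $1\to \mathrm{Diag}(\bar G)\to G^D\to \mu_p\to 1$, whereas the paper dualizes a subgroup $K$ of index $p$, making $\mu_p=(G/K)^D$ the normal subgroup in $1\to (G/K)^D\to G^D\to K^D\to 1$ --- the two spectral-sequence arguments are mirror images of each other.
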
 

\begin{proof} 
The first statement is clear, since $H^1_{\rm Sw}(B)=\Hom(G^D,\mathbb{G}_m)=G$ (as $G^D$ is connected). 

To prove the second statement, we interpret $H^i_{\rm Sw}(B)$ as $H^i(G^D,\mathbb{G}_m)$. We have $|G|=p^n$. The proof is by induction in $n$, starting with the base case $n=1$, which has already been settled in Corollary \ref{ncocymup}. 

Let $K$ be a subgroup of index $p$ in $G$, and consider the Lyndon-Hochschild-Serre
sequence for the cohomology 
$H^n(G^D,\mathbb{G}_m)$ attached to the short exact sequence 
$$
1\to (G/K)^D\to G^D\to K^D\to 1.
$$
The $E_2$ page of this spectral sequence consists of the groups $$H^r(K^D,H^q((G/K)^D,\mathbb{G}_m)).$$ So it suffices to show that these groups vanish 
when $r+q=i\ge 2$. To this end, note that $H^q((G/K)^D,\mathbb{G}_m)=H^q(\mu_p,\mathbb{G}_m)=0$ for $q\ge 2$ 
by Corollary \ref{ncocymup}. Thus it remains to show that $$H^i(K^D,H^0((G/K)^D,\mathbb{G}_m))=0\,\,\,\text{and}\,\,\,H^{i-1}(K^D,H^1((G/K)^D,\mathbb{G}_m))=0.$$ The first group equals $H^i(K^D,\mathbb{G}_m)$, which vanishes 
by the induction assumption. The second group is $H^{i-1}(K^D,G/K)=H^{i-1}(K^D,\mathbb{Z}/p\mathbb{Z})$, which vanishes for $i\ge 2$ since 
$K^D$ is connected and $G/K$ is discrete. 

The proof of the proposition is complete. 
\end{proof} 

\begin{corollary}\label{newpgrp}
The group algebra $kG$ of any finite abelian $p$-group $G$ does not admit non-trivial twists or associators. \qed
\end{corollary}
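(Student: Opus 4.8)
The plan is to obtain Corollary \ref{newpgrp} as a formal consequence of Proposition \ref{ncocympgr}, using the dictionary developed in this section between twists and associators on a finite dimensional Hopf algebra and the Sweedler cohomology of its dual.

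First I would record the relevant duality. Since $G$ is finite, $kG=\mathcal{O}(G)^*$, so the dual object is $\mathcal{O}(G)=(kG)^*$, which is precisely a finite dimensional commutative and cocommutative Hopf algebra of the type treated in Proposition \ref{ncocympgr}; concretely $\mathcal{O}(G)=k(G^D)$ and $H^i_{\rm Sw}(\mathcal{O}(G))=H^i(G^D,\mathbb{G}_m)$, which is exactly the interpretation used in the proof of that proposition.

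Next I would invoke the dictionary in the two degrees needed. A Drinfeld twist $J\in kG\ot kG$ for $kG$ is, under the natural pairing, the same datum as a convolution invertible normalized Sweedler $2$-cochain on $(kG)^*=\mathcal{O}(G)$: the twist condition (\ref{twist}) matches the Sweedler $2$-cocycle condition, and gauge equivalence of twists matches Sweedler $2$-coboundaries, so the gauge equivalence classes of twists for $kG$ are classified by $H^2_{\rm Sw}(\mathcal{O}(G))$. This is exactly the identification already used in Corollary \ref{ncocymup}. In the same manner, and this is the content of the ``associators for duals'' results of this section, the pseudotwist equivalence classes of associators for $kG$ are classified by $H^3_{\rm Sw}(\mathcal{O}(G))$.

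Finally, Proposition \ref{ncocympgr} gives $H^2_{\rm Sw}(\mathcal{O}(G))=H^3_{\rm Sw}(\mathcal{O}(G))=0$, whence every twist and every associator for $kG$ is trivial up to gauge equivalence, which is the assertion. The only point requiring care is the bookkeeping of the dictionary: getting the direction of the Hopf duality right (twists and associators for $H$ correspond to Sweedler cochains on $H^*$, not on $H$), matching normalizations, and noting that it is the vanishing of the cohomology \emph{groups}, rather than of the cocycle spaces, that yields triviality up to gauge. None of this is a genuine obstacle, since all of the mathematical substance already resides in Proposition \ref{ncocympgr} and the corollary is purely formal.
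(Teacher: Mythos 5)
Your proposal is correct and is essentially the paper's own (unwritten) argument: the corollary is stated with a \qed precisely because it follows formally from Proposition \ref{ncocympgr} together with the standard dictionary identifying gauge classes of twists for $kG$ with $H^2_{\rm Sw}(\mathcal{O}(G))=H^2(G^D,\mathbb{G}_m)$ (as in Corollary \ref{ncocymup}) and pseudotwist classes of associators for $kG=\mathcal{O}(G^D)$ with $H^3(G^D,\mathbb{G}_m)$ (Lemma \ref{assoccomm}, where commutativity of $kG$ makes condition (\ref{phicomm}) automatic). Your bookkeeping of the duality direction matches the paper's conventions, so nothing is missing.
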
  

\begin{remark}\label{zpr}
Proposition \ref{ncocympgr} was proved by Guillot for $G:=(\mathbb{Z}/2\mathbb{Z})^r$, $r\ge 1$, and $p=2$ \cite[Theorem 1.1]{gu}. Note that, for $\mathbb{Z}/2\mathbb{Z}$ with generator $g$, we have $$E(d(1+g))=1+(1+g)\ot (1+g)\ne 1=d(E(1+g)).$$
\end{remark}

Let $G:=\prod_{j=1}^n \alpha_{p,r_j}$. Then $\mathcal{O}(G)=u(\g)$, where $\g$ is an $\sum_{j=1}^n r_j$-dimensional abelian restricted $p$-Lie algebra over $k$ equipped with the $p$-associative power as the $p$-operator. Hence, Theorem \ref{ncocy} applies to $\Gamma:=G^D$.

\begin{corollary}\label{swexgd}
Let $G:=\prod_{j=1}^n \alpha_{p,r_j}$. Then 
for every $i\ge 3$, we have $$H_{\text{Sw}}^i (\mathcal{O}(G))\cong H^i (G^D,\mathbb{G}_a ).$$ 
More explicitly, let $\mathfrak{a}:=\text{Lie}(G^D)$ and let $\widetilde{\mathfrak a}$ be the filtered space associated to $G^D$ as in Proposition \ref{kunneth}. Then the following hold for every $i\ge 3$:
\begin{enumerate}
\item
If $p>2$, then 
$H_{\text{Sw}}^i(\mathcal{O}(G))$ is isomorphic to the $i$-th component of the graded supercommutative algebra $\wedge\mathfrak{a}\otimes S'(\widetilde{\mathfrak{a}})$ equipped with its standard grading (see Proposition \ref{kunneth}(1)).
\item
If $p=2$, then $H_{\text{Sw}}^i (\mathcal{O}(G))$ is isomorphic to the $i$-th component of the graded commutative algebra $S\mathfrak{a}\otimes S'(\widetilde{\mathfrak{a}})/(z^2=\phi(z),z\in \mathfrak{a})$ equipped with its standard grading (see Proposition \ref{kunneth}(2)).
\end{enumerate}
\end{corollary}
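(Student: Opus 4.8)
The plan is to deduce Corollary \ref{swexgd} directly from Theorem \ref{ncocy} together with the cohomology computation in Proposition \ref{kunneth}, specialized to the Cartier dual situation. The key observation is that $G=\prod_{j=1}^n\alpha_{p,r_j}$ is a finite commutative group scheme, so its Cartier dual $\Gamma:=G^D$ is a finite \emph{connected} (in fact unipotent) commutative group scheme, with $\mathcal{O}(\Gamma)=kG=u(\g)^*$; moreover $\mathcal O(G)=u(\g)$ for the abelian restricted $p$-Lie algebra $\g$ of dimension $\sum_j r_j$ with the $p$-associative power operator, exactly as asserted just before the statement. Thus Theorem \ref{ncocy} is applicable to this $\Gamma$.

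First I would invoke Theorem \ref{ncocy}(3): for every $i\ge 3$ the truncated exponential induces a group isomorphism ${\bf E}\colon H^i(\Gamma,\mathbb{G}_a)\xrightarrow{\sim} H^i(\Gamma,\mathbb{G}_m)$. By the definition recalled in the text, $H^i(\Gamma,\mathbb{G}_m)=H^i_{\rm Sw}(u(\g))=H^i_{\rm Sw}(\mathcal O(G))$, so this immediately gives the first displayed isomorphism $H^i_{\rm Sw}(\mathcal O(G))\cong H^i(G^D,\mathbb{G}_a)$ for all $i\ge 3$. The only point to note is that $H^\bullet(\Gamma,\mathbb{G}_a)$ is precisely the Hochschild cohomology $H^\bullet(\mathcal O(\Gamma),k)=H^\bullet(\Gamma,k)$ with trivial coefficients, since $\mathbb{G}_a$-valued group-scheme cohomology of an affine group scheme computes exactly this Hochschild (additive) cohomology; I would state this identification explicitly so that the passage to Proposition \ref{kunneth} is unambiguous.

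Next I would apply Proposition \ref{kunneth} to the finite connected group scheme $G^D$, whose Lie algebra is $\mathfrak a=\Lie(G^D)$ and whose associated filtered Frobenius-twisted space is $\widetilde{\mathfrak a}$, in the notation set up before that proposition. In characteristic $p>2$, Proposition \ref{kunneth}(1) yields a canonical isomorphism of graded algebras $H^\bullet(\mathcal O(G^D),k)\cong \wedge\mathfrak a\otimes S'(\widetilde{\mathfrak a})$, and extracting the degree-$i$ component gives part (1). For $p=2$, Proposition \ref{kunneth}(2) gives $H^\bullet(\mathcal O(G^D),k)\cong S\mathfrak a\otimes S'(\widetilde{\mathfrak a})/(z^2-\phi(z),\,z\in\mathfrak a)$, and extracting the degree-$i$ component gives part (2). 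Combining the identification of the previous paragraph with these descriptions of the additive cohomology completes the proof.

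The proof is therefore almost entirely a matter of assembling earlier results, and the \emph{main obstacle} is not a hard computation but rather getting the bookkeeping of identifications exactly right: one must check that the $\Gamma$ to which Theorem \ref{ncocy} is applied is the \emph{same} connected group scheme $G^D$ to which Proposition \ref{kunneth} is applied, and that ``$H^i(\Gamma,\mathbb{G}_a)$'' in Theorem \ref{ncocy}(3) is literally the Hochschild cohomology $H^i(\mathcal O(G^D),k)$ appearing in Proposition \ref{kunneth}. I would also remark that the restriction to $i\ge 3$ is exactly the range in which ${\bf E}$ is known to descend to cohomology (Theorem \ref{ncocy}(3) fails at $i=2$, as noted in the text), which is why the corollary is stated only for $i\ge 3$.
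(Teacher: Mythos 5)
Your overall architecture is exactly the paper's: its proof of this corollary is a one-liner assembling Theorem \ref{ncocy} with Proposition \ref{kunneth}, and your first step (Theorem \ref{ncocy}(3) applied to $\Gamma=G^D$, giving $H^i_{\text{Sw}}(\mathcal{O}(G))=H^i(G^D,\mathbb{G}_m)\cong H^i(G^D,\mathbb{G}_a)$ for $i\ge 3$) is correct. The genuine gap is in the very identification you single out as ``the main obstacle'' and then get backwards: you assert that $H^\bullet(\Gamma,\mathbb{G}_a)$ is the Hochschild cohomology of the \emph{coordinate} algebra $\mathcal{O}(\Gamma)$. It is not. Group-scheme cohomology with coefficients in $\mathbb{G}_a$ has cochains $\mathcal{O}(\Gamma)^{\ot n}=\Hom((k\Gamma)^{\ot n},k)$ with differential dual to multiplication in $k\Gamma$, so $H^\bullet(\Gamma,\mathbb{G}_a)=H^\bullet(k\Gamma,k)$, the Hochschild cohomology of the \emph{group} algebra. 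This is precisely the convention of Subsection 2.7, where $H^\bullet(\mathcal{O}(\alpha_{p,r}),k)=H^\bullet(\alpha_{p,r}^D,k)$ --- note the Cartier dual on the group side. Consequently $H^i(G^D,\mathbb{G}_a)=H^i(kG^D,k)=H^i(\mathcal{O}(G),k)$, and Proposition \ref{kunneth} must be applied to the connected group scheme $G$ itself, producing $\wedge\,\Lie(G)\ot S'(\widetilde{\Lie(G)})$ with $\Lie(G)$ of dimension $n$ and the filtration built from $\mathcal{O}(G)$ --- not to $G^D$, as you propose.

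Applying Proposition \ref{kunneth} to $G^D$ computes $H^\bullet(\mathcal{O}(G^D),k)=H^\bullet(G,\mathbb{G}_a)$ instead, and this is a genuinely different graded space once some $r_j\ge 2$. Concretely, for $G=\alpha_{p,2}$ and $p>2$, Proposition \ref{hochcoh}(1) gives $\dim H^3(G^D,\mathbb{G}_a)=\dim H^3(k[x]/(x^{p^2}),k)=1$ (spanned by $[\beta][x^*]$), whereas $\mathfrak{a}=\Lie(G^D)$ is $2$-dimensional (spanned by the primitives $x,x^p$ of $\mathcal{O}(G)$), $\widetilde{\mathfrak a}$ is $2$-dimensional in degree $2$, and the degree-$3$ component of $\wedge\mathfrak{a}\ot S'(\widetilde{\mathfrak a})$ is $\wedge^3\mathfrak{a}\oplus(\mathfrak{a}\ot\widetilde{\mathfrak a})$, of dimension $4$. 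The two answers coincide exactly when all $r_j=1$, i.e., when $G=\alpha_p^n$ is self-dual, which makes the slip easy to miss. To be fair, your reading matches the printed ``$\mathfrak{a}:=\Lie(G^D)$'' in the statement, but that labeling is inconsistent with the statement's own first display and appears to carry the same dual flip: by the pattern of Subsection 2.6, where for $H^\bullet(G,k)$ one has $\mathfrak{a}=H^1(G,k)=P(\mathcal{O}(G))=\Lie(G^D)$, the space relevant to $H^\bullet(G^D,\cdot)$ is $H^1(G^D,k)=P(\mathcal{O}(G^D))=P(kG)=\Lie(G)$. The corrected bookkeeping is therefore $H^i_{\text{Sw}}(\mathcal{O}(G))\cong\bigl(\wedge\g\ot S'(\widetilde\g)\bigr)_i$ with $\g=\Lie(G)$ and $\widetilde\g$ associated to $\mathcal{O}(G)$ as in Proposition \ref{kunneth}; your argument as written would instead establish a formula that is false in general.
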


\begin{proof}
Follows from the above and Proposition \ref{kunneth}.
\end{proof}

\begin{proposition}\label{h2sw}
Let $\g$ be a finite dimensional commutative $p$-Lie algebra over $k$, equipped with the zero $p$-power map. Then   
$H^2_{\text{Sw}}(u(\g))\cong\wedge^2\g^*$. In particular, $H^2_{\text{Sw}}(u(\g))\ncong H^2(u(\g),k)$ for $\g\ne 0$.
\end{proposition}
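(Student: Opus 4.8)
The plan is to reduce the statement to a computation of the algebraic group cohomology $H^2(\Gamma,\mathbb{G}_m)$ and then to compare $\mathbb{G}_m$- with $\mathbb{G}_a$-coefficients via the truncated exponential of Theorem \ref{ncocy}. Since $\g$ is abelian with zero $p$-operation, $u(\g)=k[x_1,\dots,x_n]/(x_1^p,\dots,x_n^p)$ with the $x_i$ primitive, so the group scheme $\Gamma$ with $\mathcal O(\Gamma)=u(\g)^*$ is the height-one infinitesimal group scheme $\prod_{i=1}^n\alpha_p$ with $\mathrm{Lie}(\Gamma)=\g$ and $\mathcal O(\Gamma)=k[y_1,\dots,y_n]/(y_1^p,\dots,y_n^p)$, the $y_i$ primitive. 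By definition $H^2_{\text{Sw}}(u(\g))=H^2(\Gamma,\mathbb{G}_m)$, the group of gauge classes of normalized twists of $\mathcal O(\Gamma)$; here the primitives give $H^1(\Gamma,\mathbb{G}_a)=\mathfrak m_\Gamma/\mathfrak m_\Gamma^2=\g^*$.

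By Theorem \ref{ncocy}(2) the truncated exponential is a group isomorphism $E\colon Z^2(\Gamma,\mathbb{G}_a)\to Z^2(\Gamma,\mathbb{G}_m)$ with inverse $L$, so
$$H^2(\Gamma,\mathbb{G}_m)\cong Z^2(\Gamma,\mathbb{G}_a)\big/L\big(B^2(\Gamma,\mathbb{G}_m)\big).$$
On the $\mathbb{G}_a$-side, the cohomology ring recalled in Section \ref{cohom} (all $r_i=1$) gives in degree two
$$H^2(\Gamma,\mathbb{G}_a)\cong\wedge^2\g^*\oplus(\g^*)^{(1)},$$
the second summand being the image of the twisted map $\overline\xi\colon H^1(\Gamma,\mathbb{G}_a)\to H^2(\Gamma,\mathbb{G}_a)$. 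Hence the whole proposition comes down to the single identity
$$L\big(B^2(\Gamma,\mathbb{G}_m)\big)=B^2(\Gamma,\mathbb{G}_a)+\mathrm{Im}(\overline\xi),$$
after which $H^2(\Gamma,\mathbb{G}_m)\cong H^2(\Gamma,\mathbb{G}_a)/\mathrm{Im}(\overline\xi)\cong\wedge^2\g^*$.

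I expect this identity to be the main obstacle: it is exactly the statement that $\mathbb{G}_m$-coboundaries fail to match $\mathbb{G}_a$-coboundaries precisely by the Frobenius-twisted classes. The assignment $F\mapsto L(dF)$ is a homomorphism $1+I\to Z^2(\Gamma,\mathbb{G}_a)$ (where $I$ is the augmentation ideal of $\mathcal O(\Gamma)$) with image $L(B^2(\Gamma,\mathbb{G}_m))$, and I would evaluate it on $F=E(f)$, $f\in I$, splitting $f=f_1+f_{\ge2}$ into its primitive part $f_1\in\g^*$ and the rest. To leading order $L(dF)=df$, but $E$ is not a homomorphism on $1$-cochains (Remark \ref{zpr}); the discrepancy is the failure of the truncated exponential to be grouplike. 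For a primitive $f_1$ one has $\Delta(E(f_1))\big(E(f_1)\otimes E(f_1)\big)^{-1}=1+(\text{terms }y^a\otimes y^b,\ a+b\ge p)$, and these top terms are precisely a representative of $\overline\xi(f_1)$, so $L(d(E(f_1)))\equiv\overline\xi(f_1)\pmod{B^2(\Gamma,\mathbb{G}_a)}$, yielding $\mathrm{Im}(\overline\xi)\subseteq L(B^2(\Gamma,\mathbb{G}_m))$ modulo $B^2$. For $f_{\ge2}$ the exponential and the coboundary commute up to a $\mathbb{G}_a$-coboundary (as in the degree $\ge3$ case of Theorem \ref{ncocy}(3)), and an induction on degree shows these contributions recover all of $B^2(\Gamma,\mathbb{G}_a)$; together this gives both inclusions. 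The delicate, potentially error-prone part is pinning down these two truncated exp/log computations and checking that the top terms genuinely represent $\overline\xi$.

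Two remarks both confirm the answer and finish the ``in particular.'' A clean alternative description of the isomorphism is by antisymmetrization: $\mathcal O(\Gamma)$ is commutative and cocommutative, so any twist $J$ has a gauge-invariant antisymmetrization $J_{21}^{-1}J$, which is an alternating bicharacter $\Gamma\times\Gamma\to\mathbb{G}_m$; since alternating bicharacters correspond to alternating homomorphisms $\Gamma\to\Gamma^D$, i.e.\ to $\wedge^2\g^*$, this furnishes the canonical isomorphism $H^2(\Gamma,\mathbb{G}_m)\cong\wedge^2\g^*$ (surjectivity by exponentiating an alternating element of $\g^*\otimes\g^*\subset I\otimes I$, injectivity supplied by the computation above). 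Finally, the normalized cochain complexes computing $H^2_{\text{Sw}}(u(\g))=H^2(\Gamma,\mathbb{G}_m)$ and $H^2(u(\g),k)=H^2(\Gamma,\mathbb{G}_a)$ share the same cochains $\mathcal O(\Gamma)^{\otimes2}$ but have $\mathbb{G}_m$- resp.\ $\mathbb{G}_a$-valued coboundaries, so the last assertion is exactly the statement that $\mathrm{Im}(\overline\xi)=(\g^*)^{(1)}\ne0$ once $\g\ne0$; equivalently, $H^2(u(\g),k)\cong\wedge^2\g^*\oplus(\g^*)^{(1)}$ strictly contains the Sweedler group $\wedge^2\g^*$.
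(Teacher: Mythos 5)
Your proposal is correct in substance and reaches the paper's answer by a differently organized route. The paper (via \cite[Proposition 6.8]{g}) runs a gauge-equivalence induction directly on twists: if $J$ is a twist, the lowest-degree part $r$ of $J-1$ is a Hochschild $2$-cocycle, written as $dx+s+\xi(y)$ with $s\in\wedge^2\g^*$ and $y\in\g^*$; gauging by $(1+x)E(y)$ kills the $dx$ and $\xi(y)$ parts, and multiplying by the genuine twist $E(-\widetilde{s})$ (citing \cite[Example 6.13]{g}) raises the degree, so every twist is gauge equivalent to $E(\widetilde{s})$, with injectivity read off from the degree-$2$ part. You instead invoke the cocycle-level isomorphism $E\colon Z^2(\Gamma,\mathbb{G}_a)\to Z^2(\Gamma,\mathbb{G}_m)$ of Theorem \ref{ncocy}(2) once and for all, so the whole proposition reduces to the identity $L\bigl(B^2(\Gamma,\mathbb{G}_m)\bigr)=B^2(\Gamma,\mathbb{G}_a)+\mathrm{Im}(\overline{\xi})$, where the right-hand side should be read as the preimage of $(\g^*)^{(1)}$ in $Z^2(\Gamma,\mathbb{G}_a)$ (note $\xi$ itself is additive only modulo coboundaries). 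The two computations you flag as delicate are exactly the paper's two gauge steps in disguise: the multiplicative coboundary of $E(y)$ for primitive $y$ is $1-\xi(y)+(\hbox{terms of degree}>p)$ --- your sign is off by $-1$, which is immaterial --- and the coboundary of $E(f)$ for $f$ with no primitive part matches $df$ to leading order. What your packaging buys: surjectivity onto $\wedge^2\g^*$ is automatic from the quotient description, so you never need \cite[Example 6.13]{g}, and it isolates cleanly why Theorem \ref{ncocy}(3) fails at $n=2$: the failure is precisely $\mathrm{Im}(\overline{\xi})$. Your antisymmetrization remark ($J\mapsto J_{21}^{-1}J$ is gauge invariant since $\mathcal{O}(\Gamma)$ is commutative and cocommutative, so $(dF)_{21}=dF$) is a genuinely useful supplement absent from the paper: it gives the inclusion $L(B^2(\Gamma,\mathbb{G}_m))\subseteq B^2(\Gamma,\mathbb{G}_a)+\mathrm{Im}(\overline{\xi})$, i.e.\ injectivity, essentially without computation, at least for $p>2$.

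Two points to tighten. First, to make ``the tail terms are coboundaries'' rigorous, use that $\mathcal{O}(\Gamma)=k[y_1,\dots,y_n]/(y_1^p,\dots,y_n^p)$ is graded with primitive generators, so the Hochschild differential preserves total degree and $H^2(\Gamma,\mathbb{G}_a)$ is concentrated in degrees $2$ (the $\wedge^2\g^*$ part) and $p$ (the $(\g^*)^{(1)}$ part); hence the cocycle $L(d(E(y)))+\xi(y)$, having components only in degrees $p+1,\dots,2p-2$, is automatically a coboundary, and the same grading argument terminates your induction. Second, $E$ is a bijection but not a homomorphism on $1$-cochains, so you cannot factor $E(f)=E(f_1)E(f_{\ge 2})$; the splitting $f=f_1+f_{\ge 2}$ must be replaced by an induction on the lowest degree of $f$ (which is then literally the paper's induction on the lowest degree of $J-1$). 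Finally, if you intend $p=2$ (the paper's Section 5 allows it), the antisymmetrization argument degenerates and the splitting $H^2=\wedge^2\g^*\oplus(\g^*)^{(1)}$ requires care, though the paper's own proof is equally brisk on this point.
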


\begin{proof}
By \cite[Proposition 6.8]{g}, computation of $H^2_{\text{Sw}}(u(\g))$ is equivalent to the classification of twists for $u(\g)^*$ up to gauge equivalence. Let $J$ be such a twist. If $J\ne 1$, let $r$ be the lowest degree part of $J-1$ (say, it has degree $n$). Then $r$ is a $2$-cocycle of $u(\g)$ with coefficients in $k$. Recall that $H^2(u(\g),k)=\wedge^2\g^*\oplus (\g^*)^{(1)}$ and the map $\xi$ (see Subsection 2.6). Thus $r$ can be written as   
$$r=\Delta(x)-x\ot 1-1\ot x+s+\xi(y),$$ 
where $x\in u(\g)^*[n]$, $s\in \wedge^2\g^*$, $y\in \g^*$, and $s$ can be nonzero only for $n=2$, while $y$ can be nonzero only for $n=p$. Thus by gauge transformation by $(1+x)E(y)$, where $E(y)=\sum_{m=0}^{p-1}\frac{y^m}{m!}$, we can bring $J$ to a form in which the degree $n$ part of $J-1$ is $s$. Hence $J':=JE(-\widetilde{s})$, where $\widetilde{s}$ is a preimage of $s$ in $\g^*\ot \g^*$, is a twist such that $J'-1$ has degree at least $n+1$. (For the statement that $E(-\widetilde{s})$ is a twist see also \cite[Example 6.13]{g}.)

This argument shows that any twist $J$ is gauge equivalent to $E(\widetilde{s})$ for some $\widetilde{s}$. Also, by looking at the degree $2$ part, it is clear that $E(\widetilde{s})$ is equivalent to $E(\widetilde{s'})$ if and only if $s=s'$, so the assignment $s\mapsto E(\widetilde{s})$ is an isomorphism between $\wedge^2\g^*$ and $H^2_{\text{Sw}}(u(\g))$, as desired.
\end{proof}

\begin{remark}
By \cite[Corollary 6.10]{g}, the twist $E(\widetilde{s})$ is minimal if and only if $s$ is non-degenerate.
\end{remark}

\subsection{Associators for $u(\g)^*$} Let $G$ be a finite commutative group scheme over $k$. 
Then the following lemma follows from the definitions in a straightforward manner.

\begin{lemma}\label{assoccomm}
There is a bijection between $H^3(G^D,\mathbb{G}_m)$ and the group of pseudotwist equivalence classes of associators for $kG=\mathcal{O}(G^D)$. \qed
\end{lemma}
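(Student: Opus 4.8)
The plan is to unwind the definitions on both sides and observe that the pseudotwist formula \eqref{phi5} is literally the cobar differential of the (Sweedler-type) complex computing $H^\bullet(G^D,\mathbb{G}_m)$. The point that makes everything collapse is that $kG=\mathcal{O}(G)^*$ is commutative (as $G$ is commutative, $\mathcal{O}(G)$ is cocommutative) and cocommutative, so $kG=\mathcal{O}(G^D)$ is a commutative cocommutative Hopf algebra. Consequently $(kG)^{\ot 3}$ is a commutative algebra, and the coherence axiom \eqref{phicomm} holds automatically for every invertible $\Phi\in(kG)^{\ot 3}$. Hence an associator for $kG$ is exactly an invertible $\Phi\in(kG)^{\ot 3}$ obeying the pentagon \eqref{phi1} and the normalization \eqref{phieps}.

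First I would identify these associators with $3$-cocycles. A $\mathbb{G}_m$-valued normalized $n$-cochain on $G^D$ is a morphism $(G^D)^n\to\mathbb{G}_m$ trivial on the degenerate faces, i.e., an invertible element of $\mathcal{O}((G^D)^n)=(kG)^{\ot n}$ satisfying the counit conditions. The cobar differential $d\colon C^3\to C^4$ sends $\Phi$ to $(1\ot\Phi)(\Delta\ot\id\ot\id)(\Phi)^{-1}(\id\ot\Delta\ot\id)(\Phi)(\id\ot\id\ot\Delta)(\Phi)^{-1}(\Phi\ot 1)$, so in the commutative algebra $(kG)^{\ot 4}$ the equation $d\Phi=1$ is precisely the pentagon \eqref{phi1}. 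Thus the set of associators for $kG$ is exactly $Z^3(G^D,\mathbb{G}_m)$; moreover, since $(kG)^{\ot 3}$ is commutative, the product of two associators is again an associator, so this set is a group, matching the multiplicative structure on $3$-cocycles.

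Next I would match the equivalences. A pseudotwist $J$ for $kG$ is an invertible $J\in(kG)^{\ot 2}$ with $(\varepsilon\ot\id)(J)=(\id\ot\varepsilon)(J)=1$, i.e., precisely a normalized $2$-cochain in $C^2(G^D,\mathbb{G}_m)$, whose differential is $dJ=(1\ot J)(\Delta\ot\id)(J)^{-1}(\id\ot\Delta)(J)(J\ot 1)^{-1}$. Using commutativity of $(kG)^{\ot 3}$ I would rewrite \eqref{phi5} as $\Phi^J=\Phi\cdot(dJ)^{-1}$, so $\Phi$ and $\Phi^J$ differ exactly by the coboundary $dJ$; conversely every coboundary is realized by a pseudotwist. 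Hence the pseudotwist-equivalence class of an associator is its class modulo $B^3(G^D,\mathbb{G}_m)$, and $\Phi\mapsto[\Phi]$ descends to a bijection from pseudotwist equivalence classes of associators for $kG$ onto $H^3(G^D,\mathbb{G}_m)=Z^3/B^3$. As $B^3$ is a subgroup, this bijection is an isomorphism of groups.

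The computations are all routine; the only step requiring care is matching \eqref{phi5} with the cobar differential, which is a direct rearrangement valid solely because $(kG)^{\ot 3}$ is commutative. The one conceptual point I would want to pin down is that an invertible $\Phi$ satisfying \eqref{phi1} and \eqref{phieps} genuinely yields a quasi-Hopf structure on $kG$ (existence of compatible $\alpha,\beta,S$), but for a Hopf algebra this is standard, so I expect no real obstacle — consistent with the lemma being flagged as following from the definitions.
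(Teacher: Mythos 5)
Your proposal is correct and is exactly the verification the paper has in mind: the lemma is stated with no written proof (the authors say it ``follows from the definitions in a straightforward manner''), and your unwinding --- commutativity of $kG^{\ot 3}$ making \eqref{phicomm} automatic, the pentagon \eqref{phi1} with normalization \eqref{phieps} becoming the condition $d\Phi=1$ on normalized $3$-cochains, and \eqref{phi5} collapsing to $\Phi^J=\Phi\cdot(dJ)^{-1}$ so that pseudotwisting is exactly multiplication by coboundaries --- is precisely that routine check. Your final caveat about the quasi-antipode is likewise handled implicitly by the paper (it simply writes $(kG,\Phi)$ for the associated quasi-Hopf algebra), and is indeed standard for a commutative cocommutative Hopf algebra with a $3$-cocycle associator.
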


Thus, by Lemma \ref{assoccomm}, every normalized $3$-cocycle $\Phi$ in $Z^3(G^D,\mathbb{G}_m)$ can be viewed as an associator for $kG$. We will denote the associated (cocommutative and commutative) quasi-Hopf algebra by $(kG,\Phi)$. Note that $(kG,\Phi)$ is pseudotwist equivalent to a Hopf algebra (i.e., to $kG$) if and only if $\Phi$ is a coboundary.

\begin{corollary}\label{assocncocy}
Let $\g$ be a finite dimensional restricted $p$-Lie algebra over $k$, and let $\Gamma$ be the finite group scheme over $k$ such that $\mathcal{O}(\Gamma)=u(\g)^*$. Then every associator for $u(\g)^*$ is of the form $E(\phi)$ for a unique $\phi\in Z^3(\Gamma,\mathbb{G}_a)$.
\end{corollary}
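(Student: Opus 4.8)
The plan is to identify the associators for the commutative Hopf algebra $u(\g)^*=\mathcal{O}(\Gamma)$ with the group of Sweedler $3$-cocycles $Z^3(\Gamma,\mathbb{G}_m)$, and then to apply the cocycle-level isomorphism $E$ of Theorem \ref{ncocy}(2) in degree $n=3$.

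First I would unpack the notion of an associator for $H:=u(\g)^*$. By definition this is an invertible normalized element $\Phi\in H^{\ot 3}$ satisfying the pentagon identity \eqref{phi1}, the counit conditions \eqref{phieps}, and the compatibility \eqref{phicomm} with the coproduct. The point is that $H=\mathcal{O}(\Gamma)$ is a genuine (coassociative) Hopf algebra which is moreover commutative, being the dual of the cocommutative Hopf algebra $u(\g)=k\Gamma$. Coassociativity gives $(\Delta\ot\id)\Delta=(\id\ot\Delta)\Delta$, so \eqref{phicomm} asks only that $\Phi$ commute with the image of $(\Delta\ot\id)\Delta$ in $H^{\ot 3}$; since $H^{\ot 3}$ is commutative, \eqref{phicomm} holds automatically. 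Hence an associator for $\mathcal{O}(\Gamma)$ is exactly a normalized invertible $\Phi\in\mathcal{O}(\Gamma)^{\ot 3}$ obeying \eqref{phi1} and \eqref{phieps}.

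Next I would match this with the cocycle description. Viewing $\mathcal{O}(\Gamma)^{\ot 3}$ as functions on $\Gamma^3$, a normalized invertible element is the same as a normalized multiplicative $3$-cochain $\Gamma^3\to\mathbb{G}_m$, and under this dictionary the pentagon \eqref{phi1} together with \eqref{phieps} is precisely the normalized Sweedler $3$-cocycle condition. Thus the set of associators for $u(\g)^*$ is canonically identified with $Z^3(\Gamma,\mathbb{G}_m)$; this is the cocycle-level refinement of Lemma \ref{assoccomm}, valid for arbitrary (not necessarily commutative) $\Gamma$, since we use only the commutativity of $\mathcal{O}(\Gamma)$.

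Finally I would invoke Theorem \ref{ncocy}(2) with $n=3$: because $x^p=0$ on the augmentation ideal of $\mathcal{O}(\Gamma)$, the truncated exponential furnishes a group isomorphism $E:Z^3(\Gamma,\mathbb{G}_a)\xrightarrow{\sim}Z^3(\Gamma,\mathbb{G}_m)$ with inverse $L$. Consequently every associator $\Phi$ for $u(\g)^*$, being an element of $Z^3(\Gamma,\mathbb{G}_m)$, is of the form $\Phi=E(\phi)$ with $\phi:=L(\Phi)\in Z^3(\Gamma,\mathbb{G}_a)$ uniquely determined, which is exactly the claim. The only substantive input is Theorem \ref{ncocy}(2), already proved; the sole thing needing care is the first identification, namely checking that the quasi-Hopf axioms \eqref{phi1}--\eqref{phicomm} reduce, thanks to commutativity of $\mathcal{O}(\Gamma)$, to the bar-complex $3$-cocycle condition cutting out $Z^3(\Gamma,\mathbb{G}_m)$.
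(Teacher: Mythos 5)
Your proposal is correct and follows essentially the same route as the paper, whose proof is the one-line observation that the statement follows from Theorem \ref{ncocy}(2) with $n=3$, the identification of associators for the commutative Hopf algebra $\mathcal{O}(\Gamma)$ with normalized cocycles in $Z^3(\Gamma,\mathbb{G}_m)$ being implicit there (and made at the cohomology level in Lemma \ref{assoccomm}). Your explicit verification that \eqref{phicomm} is automatic by commutativity of $\mathcal{O}(\Gamma)^{\ot 3}$ and that \eqref{phi1}--\eqref{phieps} cut out exactly the normalized multiplicative $3$-cocycles is exactly the intended content, correctly noting it needs no commutativity of $\Gamma$ itself.
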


\begin{proof}
Follows immediately from Theorem \ref{ncocy}.
\end{proof}

\begin{example}\label{diag}
By Corollary \ref{newpgrp}, 
the group algebra $kG$ of any finite abelian $p$-group $G$ does not admit non-trivial associators.
\end{example}

\begin{example}\label{exgd}
Let $G:=\prod_{i=1}^n \alpha_{p,r_i}$. By Corollary \ref{swexgd}, every associator for $kG$ is of the form  
$E(\phi)$ for a unique $\phi\in Z^3(G^D,\mathbb{G}_a)$. 
In particular, every associator for $k\alpha_p^n$ is of the form $E(\phi)$ for a unique $\phi\in Z^3(\alpha_p^n,\mathbb{G}_a)$ since $(\alpha_p^n)^D=\alpha_p^n$.
\end{example}

Note that Proposition \ref{hochcoh} and Example \ref{exgd} (together with the K\"{u}nneth formula) imply an explicit classification of associators for $k[\prod_{i=1}^n \alpha_{p,r_i}]$. In the next theorem we illustrate it with the simplest example.

Let 
\begin{equation}\label{phis}
\Phi:=1+
\sum_{i=1}^{p-1}\frac{1}{i}{p-1\choose i-1}x\ot x^i\ot x^{p-i}\in k\alpha_p^{\ot 3}.
\end{equation}

\begin{theorem}\label{assocalphap}
If $\Phi'$ is a non-trivial associator for $k\alpha_p$ then $(k\alpha_p,\Phi')$ is pseudotwist equivalent to $(k\alpha_p,\Phi)$.
In particular, $(k\alpha_p,\Phi)$ is a local (cocommutative and commutative ) quasi-Hopf algebra of dimension $p$ over $k$, which is not pseudotwist equivalent to a Hopf algebra.
\end{theorem}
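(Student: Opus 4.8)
The plan is to reduce the classification of associators for $k\alpha_p$ to a one-dimensional cohomology computation via Lemma \ref{assoccomm}, and then to exploit the automorphism group $\Aut(\alpha_p)=\mathbb{G}_m$ (acting by $x\mapsto\lambda x$) to collapse all nonzero classes into a single orbit; this last step is the one genuinely non-formal point.

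First I would apply Lemma \ref{assoccomm} with $G=\alpha_p$ (so $G^D=\alpha_p$), which puts the pseudotwist equivalence classes of associators for $k\alpha_p=\mathcal{O}(\alpha_p)$ in bijection with $H^3(\alpha_p,\mathbb{G}_m)$. By Corollary \ref{assocncocy} and Example \ref{exgd} (equivalently, the isomorphism $\mathbf{E}$ of Theorem \ref{ncocy}(3)) every associator has the form $E(\phi)$ for a unique $\phi\in Z^3(\alpha_p,\mathbb{G}_a)$, and $H^3(\alpha_p,\mathbb{G}_m)\cong H^3(\alpha_p,\mathbb{G}_a)$. By Proposition \ref{h3233} with $n=1$ the latter is $\wedge^3\mathfrak{a}\oplus(\mathfrak{a}\otimes\mathfrak{a}^{(1)})$, which is one-dimensional since $\mathfrak{a}=H^1(\alpha_p,k)$ is spanned by the primitive $x$ and $\wedge^3\mathfrak{a}=0$; its generator is $[\phi_0]$ with $\phi_0:=x\otimes\xi(x)$, where $\xi$ is the twisted map of Subsection \ref{cohom}.

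Next I would identify the displayed $\Phi$ with $E(\phi_0)$ and check non-triviality. A one-line degree count shows $\phi_0^2=0$ in $(k\alpha_p)^{\otimes 3}$: in $\phi_0^2$ the middle tensor slot $x^{i+i'}$ is nonzero only when $i+i'\le p-1$, while the third slot $x^{2p-i-i'}$ is nonzero only when $i+i'\ge p+1$, and these are incompatible. Hence $E(\phi_0)=1+\phi_0=\Phi$, so $\Phi$ is a genuine associator, and its class $[\phi_0]=[x]\cup[\xi(x)]$ generates $\mathfrak{a}\otimes\mathfrak{a}^{(1)}$ and is therefore nonzero; thus $\Phi$ is not a coboundary and $(k\alpha_p,\Phi)$ is not pseudotwist equivalent to a Hopf algebra.

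The main step is to show that all \emph{nonzero} classes yield pseudotwist-equivalent quasi-Hopf algebras, even though Lemma \ref{assoccomm} keeps them distinct as cohomology classes. For this I would use that a Hopf algebra automorphism $\psi_\lambda\colon x\mapsto\lambda x$ (these exhaust $\Aut(\alpha_p)=\mathbb{G}_m$, since $x$ spans the primitives of $k\alpha_p=k[x]/(x^p)$) induces a quasi-Hopf isomorphism $(k\alpha_p,\Phi)\cong(k\alpha_p,\psi_\lambda^{\otimes 3}(\Phi))$, hence a pseudotwist equivalence. Since $\psi_\lambda^{\otimes 3}$ scales each term $x\otimes x^i\otimes x^{p-i}$ by $\lambda^{1+i+(p-i)}=\lambda^{1+p}$, it multiplies $[\phi_0]$ by $\lambda^{1+p}$; as $k$ is algebraically closed, $\lambda\mapsto\lambda^{1+p}$ is surjective onto $k^\times$, so $\Aut(\alpha_p)$ acts transitively on the nonzero classes of the one-dimensional space $H^3(\alpha_p,\mathbb{G}_a)$. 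Thus, given a non-trivial associator $\Phi'$ with class $c[\phi_0]$, $c\in k^\times$, it is pseudotwist equivalent to $E(c\phi_0)$ by Lemma \ref{assoccomm}, and choosing $\lambda$ with $\lambda^{1+p}=c$ gives $E(c\phi_0)=\psi_\lambda^{\otimes 3}(\Phi)\cong\Phi$, proving the first assertion. The ``in particular'' claims are then immediate: $k\alpha_p=k[x]/(x^p)$ is local, commutative and cocommutative of dimension $p$, and the non-triviality of $[\Phi]$ established above rules out pseudotwist equivalence to a Hopf algebra. I expect the transitivity-of-automorphisms argument (weight computation plus surjectivity of the $(1+p)$-power map over an algebraically closed field) to be the crux, everything else being bookkeeping with the identifications already in place.
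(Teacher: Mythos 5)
Your proposal is correct and takes essentially the same approach as the paper: identify $H^3(\alpha_p,\mathbb{G}_m)\cong H^3(\alpha_p,\mathbb{G}_a)$ as one-dimensional with generator $[x\ot \overline{\xi}(x)]$, observe that $E(s\,x\ot\overline{\xi}(x))=1+s\,x\ot\overline{\xi}(x)$ by the nilpotency of the products $(x^i\ot x^{p-i})(x^j\ot x^{p-j})$, and then collapse all nonzero classes via the automorphism $x\mapsto\lambda x$ with $\lambda^{p+1}=s$, which is exactly the paper's map $x\mapsto\mu x$, $\mu^{p+1}=s$. The only minor gap is that you cite just the $p>2$ decomposition $\wedge^3\mathfrak{a}\oplus(\mathfrak{a}\ot\mathfrak{a}^{(1)})$ from Proposition \ref{h3233}, whereas the theorem holds for all $p>0$ and the paper also handles $p=2$ via $H^3(\alpha_2,k)\cong S^3\mathfrak{a}$; since this is again one-dimensional with generator $x\ot x\ot x=x\ot\xi(x)$ and your weight computation $\lambda^{1+p}$ still applies, your argument carries over verbatim.
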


\begin{proof}
Let $\mathcal{O}(\alpha_p)=k[x]/(x^p)$. By (\ref{h33}), $H^3(\alpha_p,k)\cong \mathfrak{a}\ot \mathfrak{a}^{(1)}$ if $p>2$ (as $\wedge^3 \mathfrak{a} =0$, since $\mathfrak{a}$ is $1$-dimensional), 
and by (\ref{h32}), $H^3(\alpha_2,k)\cong S^3 \mathfrak{a}$. In particular, since $\mathfrak{a}$ is $1$-dimensional, we see that $H^3(\alpha_p,k)$ is $1$-dimensional for every $p$. It follows that the class $[\phi]$ forms a basis for $H^3(\alpha_p,k)$, where $$\phi:=x\ot \overline{\xi}(x)=\sum_{i=1}^{p-1}\frac{1}{i}{p-1\choose i-1}x\ot x^i\ot x^{p-i}.$$

Now, since $(x^i\ot x^{p-i})(x^j\ot x^{p-j})=0$ for every $1\le i,j\le p-1$, it follows from Theorem \ref{ncocy} that  
\begin{eqnarray*}
\lefteqn{
\Phi_s:=E(s(x\ot \overline{\xi}(x)))}\\
& = & \prod_{i=1}^{p-1}E\left(\frac{s}{i}{p-1\choose i-1}x\ot x^i\ot x^{p-i}\right)=1+
\sum_{i=1}^{p-1}\left(\frac{s}{i}{p-1\choose i-1}x\ot x^i\ot x^{p-i}\right)
\end{eqnarray*}
is an associator for $k\alpha_p$ for every $s\in k$, and every associator of $k\alpha_p$ is pseudotwist equivalent to one of this form.

Finally, observe that $\Phi_0=1$, and $(k\alpha_p,\Phi)\cong (k\alpha_p,\Phi_s)$ for every $0\ne s\in k$. Indeed, the map $(k\alpha_p,\Phi)\to (k\alpha_p,\Phi_s)$ given by $x\mapsto \mu x$, where $\mu\in k$ satisfies $\mu^{p+1}=s$, is an isomorphism of quasi-Hopf algebras. 

We are done.
\end{proof}

\section{Finite unipotent tensor categories}

In this section we consider finite dimensional (not necessarily triangular) local quasi-Hopf algebras and finite (not necessarily symmetric) unipotent tensor categories over $k$.

\begin{theorem}\label{main2}
Let $H$ be a finite dimensional local quasi-Hopf algebra over $k$. Then the following hold:
\begin{enumerate}
\item
There exists a unique finite connected unipotent group scheme $U$ over $k$ that has a $\mathbb{G}_m$-action such that $\gr(H)\cong kU$ as Hopf algebras. In particular, 
$\gr(H)$ is a cocommutative local Hopf algebra generated by primitive elements, i.e., a quotient of an enveloping algebra.
\item

$\dim(H)=|U|$, hence $\dim(H)$ is a power of $p$.
\end{enumerate}
\end{theorem}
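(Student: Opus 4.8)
The plan is to analyze the structure of $\gr(H)$ for a finite dimensional local quasi-Hopf algebra $H$, show it is pseudotwist equivalent to an ordinary Hopf algebra, and then identify the latter as the group algebra of a connected unipotent group scheme. Since $H$ is local, it has a unique simple module (the trivial one), so $H[0] = H/\Rad(H) = k$. Consequently $\gr(H)$ is a graded quasi-Hopf algebra whose degree-$0$ component is $k$, which forces its associator $\Phi_0$ to lie in $k^{\otimes 3} = k$ and hence equal $1$; thus $\gr(H)$ is already a genuine graded Hopf algebra. Being generated in degrees $0$ and $1$ (by the standard fact cited in Subsection 2.3), and having $\gr(H)[0]=k$, it is generated by $H[1]$. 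The key point is that this graded Hopf algebra is \emph{connected} (its only grouplike is $1$, as $H$ is local) and cocommutative.

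First I would establish cocommutativity of $\gr(H)$. The natural route is to apply the trivializing-$R$ and trivializing-$\Phi$ machinery in the symmetric case only as motivation, but in the non-symmetric setting one argues directly: a connected graded Hopf algebra generated in degree $1$ has its degree-$1$ part consisting of primitive elements, and I would invoke the structure theory (Milnor--Moore / the $p$-restricted analogue) to conclude that $\gr(H)$ is a quotient of a restricted enveloping algebra $u(\g)$ where $\g = \Gr(H)[1]$ is a restricted $p$-Lie algebra. In characteristic $p$, a connected cocommutative Hopf algebra generated by primitives is precisely such a quotient, which is the group algebra $kU$ of a finite connected unipotent group scheme $U$. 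The $\mathbb{G}_m$-action in the statement is exactly the grading: the integer grading on $\gr(H)$ corresponds to a $\mathbb{G}_m$-action on $U$ by automorphisms, giving the claimed equivalence $\gr(H)\cong kU$ of Hopf algebras compatibly with this action.

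For uniqueness of $U$, I would note that $U$ is determined by $\gr(H)$ as a graded Hopf algebra, and $\gr(H)$ is an invariant of $H$ up to isomorphism; the $\mathbb{G}_m$-action being the grading pins down $U$ canonically. The main obstacle I anticipate is \textbf{proving cocommutativity} of $\gr(H)$ without the symmetric structure: in the triangular case one has an $R$-matrix forcing (super)cocommutativity, but here $H$ is merely local, so one must argue that a finite dimensional connected graded quasi-Hopf algebra with trivial associator and one-dimensional degree-$0$ part is automatically cocommutative. This should follow because the dual $\gr(H)^*$ is a finite dimensional local \emph{commutative} Hopf algebra (locality of $\gr(H)^*$ is dual to $\gr(H)$ being connected/coconnected), and $\gr(H)^* = \mathcal{O}(U)$ for a connected group scheme $U$; commutativity of $\mathcal{O}(U)$ is what needs the careful justification, relying on the fact that $\gr(H)$ is generated by primitives so its dual is generated by a commuting family. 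Concretely, $\gr(H)[1]$ primitive gives $\gr(H) = u(\g)/(\text{relations})$, whose dual is commutative by construction.

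Finally, Part (2) is immediate: tensor (quasi-)Hopf algebras have $\dim(H) = \FPdim(\Rep(H))$, and this dimension is preserved by passing to $\gr(H)$ since $\dim \gr(H) = \dim H$. Thus $\dim(H) = \dim(kU) = |U|$. Since $U$ is a finite connected (hence infinitesimal) unipotent group scheme over $k$, its order $|U| = \dim_k \mathcal{O}(U)$ is a power of $p$ by the structure of connected finite group schemes in characteristic $p$ (as recalled via the presentation \eqref{conncohomiso}, where $\dim \mathcal{O}(U) = \prod_i p^{r_i}$). This completes the argument.
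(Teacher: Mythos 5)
Your proposal is correct and follows essentially the same route as the paper's proof: locality of $H$ gives $H[0]=k$, so the associator of $\gr(H)$ is a scalar in $H[0]^{\otimes 3}$ (hence $1$ by the counit axioms), $H[1]$ consists of primitives and generates $\gr(H)$, forcing $\gr(H)$ to be a cocommutative connected graded Hopf algebra, i.e. $kU$ for a finite connected unipotent group scheme $U$ with the $\mathbb{G}_m$-action given by the grading, and $\dim H=\dim\gr(H)=|U|$, a power of $p$ by the presentation of $\mathcal{O}(U)$. One small caution: locality of $H$ by itself does not imply that $1$ is the only grouplike of $\gr(H)$ (e.g. $k[\mathbb{Z}/p\mathbb{Z}]$ is local but has $p$ grouplikes); the correct reason, which you in effect supply later, is that $\gr(H)^*$ is a finite dimensional graded algebra with degree-$0$ part $k$ and nilpotent positive part, hence local, so connectedness of $U$ comes from the radical grading rather than from locality per se.
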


\begin{proof} 
Clearly $I:=\Rad(H)$ is a quasi-Hopf ideal of $H$, so the associated graded algebra $\gr(H)=\bigoplus_r H[r]$ is a local quasi-Hopf algebra. Since the associator of $\gr(H)$ lives in $H[0]^{\otimes 3}$, it is equal to $1$. Thus, $\gr(H)$ is a radically graded local {\em Hopf algebra}. Since $H[1]$ consists of primitive elements and by \ref{sta}, $H[1]$ generates $\gr(H)$, it follows that $\gr(H)$ is cocommutative. Therefore $\gr(H)\cong kU$ is the group algebra of some finite connected (as $\gr(H)^*\cong \mathcal{O}(U)$ has a unique grouplike element, by locality of $kU$) unipotent group scheme $U$.
\end{proof}

\begin{theorem}\label{uniptc}
Let $\mathcal{C}$ be a finite unipotent tensor category over $k$.
Then the following hold:
\begin{enumerate}
\item
$\mathcal{C}$ is tensor equivalent to the representation category $\Rep(H)$ of a finite dimensional local quasi-Hopf algebra $H$ over $k$.
\item

$\FPdim(\mathcal{C})$ is a power of $p$.
\end{enumerate}
\end{theorem}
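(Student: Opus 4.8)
The plan is to deduce both statements from results already assembled in the paper, with the only genuine input being a Grothendieck-ring observation that pins down the algebra $H$. For part (1), I would first note that since the unit object $\be$ is the unique simple object of $\mathcal{C}$, every object $X$ admits a composition series all of whose factors are isomorphic to $\be$. Hence the class of $X$ in the Grothendieck ring equals $\ell(X)[\be]$, where $\ell(X)$ is the length of $X$, and therefore $\FPdim(X)=\ell(X)\in\mathbb{Z}_{\ge 0}$. In particular $\mathcal{C}$ is integral, so by \cite[Proposition 2.6]{eo} (see Subsection 2.2) there is a finite dimensional quasi-Hopf algebra $H$ with a tensor equivalence $\mathcal{C}\simeq\Rep(H)$.

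Next I would identify $H$ as local. Under the equivalence $\mathcal{C}\simeq\Rep(H)$ the isomorphism classes of simple objects of $\mathcal{C}$ correspond to those of simple $H$-modules, and the unit $\be$ corresponds to the trivial one-dimensional module. Since $\be$ is the unique simple object, $H$ has a unique simple module and it is one-dimensional; as $k$ is algebraically closed this forces $H/\Rad(H)\cong k$, i.e.\ $H$ is local. This establishes (1).

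For part (2), I would use the identity $\FPdim(\mathcal{C})=\dim(H)$, which holds because $\mathcal{C}\simeq\Rep(H)$ (see Subsection 2.2). Having shown in (1) that $H$ is a finite dimensional local quasi-Hopf algebra, Theorem \ref{main2}(2) applies directly and gives $\dim(H)=|U|$ for a finite connected unipotent group scheme $U$, which is a power of $p$. Hence $\FPdim(\mathcal{C})$ is a power of $p$. (Alternatively one can observe that $\FPdim(\mathcal{C})=\FPdim(\be)\FPdim(P(\be))=\FPdim(P(\be))$ since $\be$ is the only simple object, reducing the claim to the FP-dimension of a single projective cover; but routing through $\dim(H)$ and Theorem \ref{main2} is cleaner.)

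The statement carries little independent content beyond organizing these ingredients, so I do not expect a serious obstacle. The one step that does the real work is the passage from the abstract unipotency hypothesis (unique simple object $=\be$) to the two algebraic conclusions \emph{integral} and \emph{local}; both follow immediately from the length computation above, after which Theorem \ref{main2} and \cite[Proposition 2.6]{eo} finish the argument. If anything requires care, it is only the verification that the reconstructed $H$ has precisely one simple module of dimension one, which is exactly what the uniqueness of $\be$ guarantees.
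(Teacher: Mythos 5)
Your proposal is correct and follows the paper's own argument exactly: integrality from the unique simple object $\mathbf{1}$, \cite[Proposition 2.6]{eo} to realize $\mathcal{C}\simeq\Rep(H)$ with $H$ local, and Theorem \ref{main2}(2) combined with $\FPdim(\mathcal{C})=\dim(H)$ for the $p$-power claim. The only difference is that you spell out the routine details (the length computation giving $\FPdim(X)=\ell(X)$ and the identification $H/\Rad(H)\cong k$) that the paper leaves implicit.
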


\begin{proof} 
(1) Since the unit object is the unique simple object in $\mathcal{C}$, $\mathcal{C}$ is integral. Therefore, by \cite[Proposition 2.6]{eo}, $\mathcal{C}$ is tensor equivalent to the representation category $\Rep(H)$ of a finite dimensional quasi-Hopf algebra $H$ over $k$, and since $\mathcal{C}$ is unipotent, $H$ is local.

(2) Follows from Part (1) and Theorem \ref{main2}.
\end{proof}

Recall the associator $\Phi$ for $k\alpha_p$ given in (\ref{phis}).

\begin{theorem}\label{cor2} 
Let $H$ be a local quasi-Hopf algebra over $k$ of dimension $p$. Then $H$ is pseudotwist equivalent to either the Hopf algebra $k[\mathbb{Z}/p\mathbb{Z}]$, the Hopf algebra $k\alpha_p$, or the quasi-Hopf algebra $(k\alpha_p,\Phi)$.
\end{theorem}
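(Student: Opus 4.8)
The plan is to combine the structural classification already available from Theorem \ref{main2} with the cohomological computations of Section 5. By Theorem \ref{main2}, a local quasi-Hopf algebra $H$ of dimension $p$ has $\gr(H)\cong kU$ for a finite connected unipotent group scheme $U$ with $|U|=p$. First I would identify the possible $U$: a connected unipotent group scheme of order $p$ over $k$ must have $\mathcal{O}(U)=k[x]/(x^p)$ with $x$ primitive, so $U=\alpha_p$, and hence $\gr(H)\cong k\alpha_p$ as graded Hopf algebras. (The other group scheme of order $p$, namely $\mathbb{Z}/p\mathbb{Z}$, is \emph{not} connected, so it arises as $H$ itself only when $H$ is already semisimple/\'etale, which I treat as the separate trivial associator case on $k\mathbb{Z}/p\mathbb{Z}$ — but since we are told $H$ is \emph{local}, the associated graded is forced to be $k\alpha_p$.)

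The key observation is that $H$ is automatically pseudotwist equivalent to $(\gr(H),\Phi')$ for some associator $\Phi'$, because the radical filtration gives a deformation with trivial degree-$0$ part. More precisely, I would argue that since $\gr(H)=k\alpha_p$ is a Hopf algebra, the associator of $H$ can be read off as a class in $H^3(\alpha_p^D,\mathbb{G}_m)$ via Lemma \ref{assoccomm} (here $k\alpha_p=\mathcal{O}(\alpha_p^D)=\mathcal{O}(\mu_p)$, using $(\alpha_p)^D=\alpha_p$). By Corollary \ref{assocncocy} and Example \ref{exgd}, every associator for $k\alpha_p$ is of the form $E(\phi)$ for a unique $\phi\in Z^3(\alpha_p^D,\mathbb{G}_a)$, and by Theorem \ref{assocalphap} every such nontrivial associator is pseudotwist equivalent to the specific $\Phi$ of \eqref{phis}.

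So the argument runs: apply Theorem \ref{main2} to conclude $\gr(H)\cong k\alpha_p$; invoke Lemma \ref{assoccomm} to view the associator of $H$ as a $3$-cocycle class; then Theorem \ref{assocalphap} gives precisely two pseudotwist classes, the trivial one (where $H$ is pseudotwist equivalent to a Hopf algebra) and the class of $\Phi$. In the trivial-associator case, $H$ is pseudotwist equivalent to a local cocommutative Hopf algebra of dimension $p$, which by the same classification of connected versus \'etale group schemes of order $p$ is either $k[\mathbb{Z}/p\mathbb{Z}]$ or $k\alpha_p$; in the nontrivial case it is $(k\alpha_p,\Phi)$. This exhausts the three stated possibilities.

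The main obstacle I anticipate is justifying rigorously that $H$ itself (not merely a formal deformation) is pseudotwist equivalent to one of the \emph{graded} models $(k\alpha_p,\Phi')$ — i.e., that the radically-graded degeneration can be reversed by a pseudotwist. The cleanest route is to mimic the degree-by-degree trivialization used in the proof of Theorem \ref{symmwithphi}: lift the graded associator class obstruction by obstruction, using that in this commutative/cocommutative setting the relevant $H^3$ is exactly $1$-dimensional (by \eqref{h33}--\eqref{h32}, since $\mathfrak{a}$ is $1$-dimensional), so there is no room for distinct higher-order corrections beyond the single scalar parameter $s$ already normalized away in Theorem \ref{assocalphap}. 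One must also handle the semisimple alternative carefully: if $H$ has trivial radical it is $kG$ for an \'etale group scheme of order $p$, giving $k[\mathbb{Z}/p\mathbb{Z}]$, which accounts for why that case appears despite $H$ being described as local only in the nontrivial filtered cases.
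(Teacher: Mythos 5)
Your central reduction step is false, and it is exactly the case $k[\mathbb{Z}/p\mathbb{Z}]$ that breaks it. You claim that $H$ is ``automatically pseudotwist equivalent to $(\gr(H),\Phi')$ for some associator $\Phi'$.'' Take $H=k[\mathbb{Z}/p\mathbb{Z}]$ (with trivial associator): in characteristic $p$ this is a \emph{local} algebra (isomorphic to $k[y]/(y^p)$ via $y=g-1$), and $\gr(H)\cong k\alpha_p$ as Hopf algebras; yet $H$ is not pseudotwist equivalent to $(k\alpha_p,\Phi')$ for any $\Phi'$, since by Theorem \ref{assocalphap} that would force $\Rep(\mathbb{Z}/p\mathbb{Z})$ to be tensor equivalent to $\Rep(\alpha_p)$ or $\Rep(\alpha_p,\Phi)$, contradicting Corollary \ref{cor3}. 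Your attempted patch for this case compounds the error: you assert that $k[\mathbb{Z}/p\mathbb{Z}]$ arises from the ``semisimple/\'etale'' alternative with trivial radical, but you are confusing the group algebra $k[\mathbb{Z}/p\mathbb{Z}]$ (local and non-semisimple in characteristic $p$) with the function algebra $\mathcal{O}(\mathbb{Z}/p\mathbb{Z})$ (\'etale); a local algebra of dimension $p>1$ never has trivial radical, so your semisimple branch is empty, and within your own framework the trivial-associator branch would yield only $k\alpha_p$, so $k[\mathbb{Z}/p\mathbb{Z}]$ could never appear. The obstacle you flag at the end --- reversing the filtered degeneration degree by degree --- is not merely technical: it genuinely cannot be done, because a pseudotwist can only change $\Delta$ by a coboundary, and the deformation $\Delta(x)=x\ot 1+1\ot x+x\ot x$ taking $k\alpha_p$ to $k[\mathbb{Z}/p\mathbb{Z}]$ is a nontrivial class; the degree-by-degree argument in the proof of Theorem \ref{symmwithphi} trivializes only the \emph{associator}, never the comultiplication, and the paper nowhere claims $H$ is pseudotwist equivalent to $\gr(H)$.

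The paper's proof avoids the comparison with $\gr(H)$ altogether, and the mechanism is worth internalizing. From Theorem \ref{main2} one gets $\gr(H)\cong k\alpha_p$, hence $H$ is generated as an algebra by a single element $x$ of degree $1$ with $x^p\in I^p=0$, so $H=k[x]/(x^p)$ \emph{as algebras} --- in particular $H$ is commutative. Commutativity makes the conjugation in \eqref{phicomm} trivial, so $\Delta$ is honestly coassociative and $H$ is a genuine local commutative Hopf algebra of dimension $p$ carrying $\Phi$ as extra data satisfying \eqref{phi1} and \eqref{phieps}. One then classifies $H$ directly: if it has $p$ grouplikes, $H=k[\mathbb{Z}/p\mathbb{Z}]$ and Example \ref{diag} kills $\Phi$ up to pseudotwist; otherwise $H$ is connected, a nontrivial primitive generates, $H=k\alpha_p$ as Hopf algebras, and Theorem \ref{assocalphap} gives the two pseudotwist classes of $\Phi$. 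So the correct case split is by grouplikes of the honest Hopf structure on $H$ itself, not by a (false) rigidity of the radical filtration.
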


\begin{proof} 
By Theorem \ref{main2}, $\gr(H)=kU$ for a connected unipotent group scheme $U$ of order $p$. Thus  
$U=\alpha_p$, and we can identify $\gr(H)$ with $k[x]/(x^p)$ as graded vector spaces. Since $H$ is generated by $x$ in degree $1$, it is clear that $H=k[x]/(x^p)$ as {\em algebras}. Thus, $H$ is a local {\em commutative Hopf algebra}, equipped with an invertible element in $H^{\ot 3}$ satisfying (\ref{phi1}), (\ref{phieps}) (as (\ref{phi4}) is satisfied automatically).

Now, if $H$ has $p$ distinct grouplike elements then $H=k[\mathbb{Z}/p\mathbb{Z}]$, so the claim follows from Example \ref{diag}. 

Otherwise, $H$ is connected, hence must have a non-trivial primitive element, which implies that $H=k[x]/(x^p)$ as {\em Hopf algebras}, so the claim follows from Theorem \ref{assocalphap}.
\end{proof}

\begin{corollary}\label{cor3}
Let $\mathcal{C}$ be a unipotent tensor category over $k$ of FP-dimension $p$. Then $\mathcal{C}$ is tensor equivalent to one of the following three non-equivalent tensor categories: $\Rep(\mathbb{Z}/p\mathbb{Z})$, $\Rep(\alpha_p)$, or $\Rep(\alpha_p,\Phi)$.
\end{corollary}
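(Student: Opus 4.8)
The plan is to obtain the list of categories by combining the two immediately preceding results, and then to treat the pairwise non-equivalence as the real content. First I would invoke Theorem \ref{uniptc}(1) to write $\mathcal{C}\cong \Rep(H)$ for a finite dimensional local quasi-Hopf algebra $H$, where $\dim(H)=\FPdim(\mathcal{C})=p$. Theorem \ref{cor2} then says that $H$ is pseudotwist equivalent to one of $k[\mathbb{Z}/p\mathbb{Z}]$, $k\alpha_p$, or $(k\alpha_p,\Phi)$. Since pseudotwist-equivalent quasi-Hopf algebras have tensor-equivalent representation categories, $\mathcal{C}$ is tensor equivalent to $\Rep(\mathbb{Z}/p\mathbb{Z})$, $\Rep(\alpha_p)$, or $\Rep(\alpha_p,\Phi)$. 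This disposes of the listing, and it remains to show that the three are pairwise inequivalent.

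Next I would isolate $\Rep(\alpha_p,\Phi)$ from the other two. Each of the three categories has a single simple object $\mathbf 1$, so any exact faithful tensor functor to $\Vect$ sending $\mathbf 1$ to $k$ is determined on objects by composition length and is therefore isomorphic, as a plain functor, to the forgetful functor; hence a fiber functor exists precisely when the forgetful functor admits a tensor structure, i.e. when the associator can be pseudotwist-trivialized. For $\Rep(\mathbb{Z}/p\mathbb{Z})$ and $\Rep(\alpha_p)$ such a fiber functor exists, as these are representation categories of genuine Hopf algebras. For $\Rep(\alpha_p,\Phi)$, however, Theorem \ref{assocalphap} asserts that $(k\alpha_p,\Phi)$ is \emph{not} pseudotwist equivalent to a Hopf algebra, so $\Rep(\alpha_p,\Phi)$ admits no fiber functor and is therefore not tensor equivalent to either of the other two.

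Finally, and this is the step I expect to be the main obstacle, I must separate $\Rep(\mathbb{Z}/p\mathbb{Z})$ from $\Rep(\alpha_p)$. Naive invariants do not help: each has a single simple object (so its Grothendieck ring is just $\mathbb{Z}$), and on the regular projective object the fusion coincides, so I cannot distinguish them at the level of fusion or Green rings. Instead I would argue through Hopf-algebraic reconstruction. Because the fiber functor is unique up to isomorphism of plain functors (as above), a tensor equivalence $\Rep(\mathbb{Z}/p\mathbb{Z})\cong\Rep(\alpha_p)$ would, after composing with the forgetful functor on $\Rep(\alpha_p)$, carry the forgetful fiber functor of $k[\mathbb{Z}/p\mathbb{Z}]$ to a fiber functor whose reconstructed Hopf algebra is $k\alpha_p$; since the only ambiguity in such a reconstruction is a twist, this forces $k\alpha_p\cong (k[\mathbb{Z}/p\mathbb{Z}])^J$ for some twist $J$. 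But by Corollary \ref{newpgrp} the group algebra of the abelian $p$-group $\mathbb{Z}/p\mathbb{Z}$ admits no non-trivial twist, whence $k\alpha_p\cong k[\mathbb{Z}/p\mathbb{Z}]$ as Hopf algebras, which is impossible since the former is connected with a unique grouplike element while the latter has $p$ grouplike elements. (One could also observe that $\Rep(\mathbb{Z}/p\mathbb{Z})$ and $\Rep(\alpha_p)$ are already distinct as \emph{symmetric} categories by the uniqueness in Corollary \ref{classuniptr}, cf. Corollary \ref{alphapsym}, but that only yields symmetric inequivalence; it is the twist argument that rules out a plain tensor equivalence.) With the three categories pairwise inequivalent, the proof is complete.
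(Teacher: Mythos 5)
Your proposal is correct, and the first step is exactly the paper's proof, which consists of the single line ``Follows from Theorems \ref{uniptc}, \ref{cor2}'': one writes $\mathcal{C}\cong\Rep(H)$ with $H$ local of dimension $p$ by Theorem \ref{uniptc} and then applies Theorem \ref{cor2}, using that pseudotwist equivalence preserves the representation category. Where you go beyond the paper is in actually proving the pairwise non-equivalence, which the paper treats as implicit in the earlier results; your two arguments for it are both sound. To separate $\Rep(\alpha_p,\Phi)$ from the other two, you correctly observe that since each category has $\mathbf 1$ as its unique simple object, any quasi-fiber functor is, as a plain functor, the forgetful one (dimensions are forced by composition length), so a genuine fiber functor on $\Rep(\alpha_p,\Phi)$ would exhibit $(k\alpha_p,\Phi)$ as pseudotwist equivalent to a Hopf algebra, contradicting Theorem \ref{assocalphap}; this is precisely the content of that theorem translated to categorical language. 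To separate $\Rep(\mathbb{Z}/p\mathbb{Z})$ from $\Rep(\alpha_p)$, the same uniqueness of the underlying functor legitimizes the reconstruction step, so a tensor equivalence would force $k\alpha_p\cong (k[\mathbb{Z}/p\mathbb{Z}])^J$ for a twist $J$, and Corollary \ref{newpgrp} (no non-trivial twists for group algebras of abelian $p$-groups) reduces this to a Hopf algebra isomorphism $k\alpha_p\cong k[\mathbb{Z}/p\mathbb{Z}]$, ruled out by counting grouplikes. You are also right that the symmetric-category uniqueness in Corollary \ref{classuniptr} would only give symmetric inequivalence (and only for $p>2$, whereas Section 6 works for all $p>0$), so the twist argument is the right tool; in short, your proof matches the paper's skeleton and supplies a rigor the paper omits.
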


\begin{proof}
Follows from Theorems \ref{uniptc}, \ref{cor2}.
\end{proof}

\begin{remark}\label{newrmk}
Connected Hopf algebras of dimension $p^2$ and $p^3$ over $k$ were classified in \cite{wan,nww}, and those with large abelian primitive spaces were classified in \cite{wan2}. Also, it was recently proved in \cite{nw} that the only Hopf algebras of dimension $p$ over $k$ are $k[\mathbb{Z}/p\mathbb{Z}]$, $\mathcal{O}(\mathbb{Z}/p\mathbb{Z})$ and $k\alpha_p$.
\end{remark}

\section{Appendix: Maximal Tannakian and super-Tannakian subcategories of symmetric tensor categories}

\vskip .1in
\centerline{\bf By Kevin Coulembier and Pavel Etingof}
\vskip .1in

Using cotriangular coquasi-Hopf algebras, Theorem \ref{classchptr} and Corollary \ref{classuniptr} can be extended to the case when the category $\C$ is not necessarily finite, and it is only required that the semisimple part of $\C$ is an integral fusion category. 
Namely, we have the following theorem. 

\begin{theorem}\label{gene} Let $\C$ be a symmetric tensor category over an algebraically closed field $k$ of characteristic $p>2$ with the Chevalley property, such that its semisimple part $\C_0$ is an integral fusion category. Then $\C$ admits a symmetric fiber functor to the category $\sVec$ of supervector spaces. Thus, there exists a unique affine supergroup scheme $\mathcal{G}$ over $k$ and a grouplike element $\varepsilon \in k\mathcal{G}$ of order $\le 2$, whose action by conjugation on $\mathcal{O}(\mathcal{G})$ is the parity automorphism, such that $\mathcal{C}$ is symmetric tensor equivalent to $\Rep(\mathcal{G},\varepsilon)$. In particular, if $\C$ is unipotent then there is a unipotent affine group scheme $\mathcal{G}$ such that $\C\cong \Rep(\mathcal{G})$. 
\end{theorem}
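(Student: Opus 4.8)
The plan is to reduce the statement to the finite case, Theorem~\ref{classchptr}, by realizing $\C$ as the category of comodules over a cotriangular coquasi-Hopf algebra and writing the latter as a filtered union of finite-dimensional pieces. Because $\C$ is a locally finite symmetric tensor category whose semisimple part $\C_0$ is an integral fusion category, the standard reconstruction theory realizes it as $\C\cong \text{Corep}(C)$ for a cotriangular coquasi-Hopf algebra $C$ over $k$ (the coalgebra of the category, with product, associator, antipode and $R$-form supplied by the tensor, associativity, rigidity and symmetric structures). The Chevalley property translates into the statement that the coradical $C_0\subseteq C$ is a sub-coquasi-Hopf algebra with $\text{Corep}(C_0)\cong\C_0$. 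First I would write $C$ as the filtered union $C=\bigcup_\lambda C_\lambda$ of its finite-dimensional sub-coquasi-Hopf algebras containing $C_0$ (any finite-dimensional subcoalgebra enlarges to such a $C_\lambda$ by closing up under product, coproduct, antipode, associator and $R$-form; these are directed with union $C$). Each $\C_\lambda:=\text{Corep}(C_\lambda)=\Rep(C_\lambda^*)$ is then a finite symmetric tensor category with the Chevalley property, and it is integral because the integrality of $\C_0$ together with the Chevalley property forces all objects of $\C$ to have integer $\FPdim$.

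Next I would apply Theorem~\ref{classchptr} to each finite piece: $\C_\lambda$ admits a symmetric fiber functor to $\sVec$, so there is a unique finite supergroup scheme $\mathcal{G}_\lambda$ and a grouplike $\epsilon_\lambda$ with $\C_\lambda\cong\Rep(\mathcal{G}_\lambda,\epsilon_\lambda)$; dually, a gauge transformation (coquasi-twist) identifies $C_\lambda$ with $(\O(\mathcal{G}_\lambda),R_{\epsilon_\lambda})$. The goal is to assemble these into a single symmetric fiber functor $\C\to\sVec$, equivalently into a single coquasi-twist $j\in (C\ot C)^*=\varprojlim_\lambda (C_\lambda\ot C_\lambda)^*$ trivializing the associator of $C$ and normalizing its cotriangular structure to $R_\epsilon$ form. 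Such a $j$ exists as soon as the finite-level trivializations $j_\lambda$ can be chosen compatibly under restriction, and then $\mathcal{G}:=\varprojlim_\lambda\mathcal{G}_\lambda$ is the desired affine supergroup scheme, with $\O(\mathcal{G})=\varinjlim_\lambda\O(\mathcal{G}_\lambda)$ and $\C\cong\Rep(\mathcal{G},\epsilon)$.

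The hard part will be precisely this coherence step, since Theorem~\ref{classchptr} produces each $j_\lambda$ (and each $\mathcal{G}_\lambda$) only up to gauge, i.e.\ up to the automorphisms of the super-fiber functor. I would present the set $T_\lambda$ of valid trivializations on $C_\lambda$ as a nonempty torsor under the affine automorphism group scheme $A_\lambda$ of the fiber functor on $\C_\lambda$ (essentially $\mathcal{G}_\lambda$ itself), and reduce compatibility to surjectivity (a Mittag--Leffler condition) of the transition maps $T_\lambda\to T_\mu$ for $\mu\le\lambda$. This surjectivity follows from the uniqueness clause of Theorem~\ref{classchptr} applied to the inclusion $\C_\mu\subseteq\C_\lambda$: the fiber functor on $\C_\lambda$ restricts to one on $\C_\mu$, which by uniqueness is isomorphic to the chosen one, yielding canonical surjections $\mathcal{G}_\lambda\twoheadrightarrow\mathcal{G}_\mu$ and hence surjective torsor maps. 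The nonemptiness of $\varprojlim_\lambda T_\lambda$ then supplies the global trivialization $j$, and one verifies that the resulting functor is a genuine symmetric tensor functor to $\sVec$ by testing it on each $\C_\lambda$.

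Finally, for the unipotent case one has $\C_0=\Vect$, so every $\C_\lambda$ is unipotent and Corollary~\ref{classuniptr} applies in place of Theorem~\ref{classchptr}: each $\mathcal{G}_\lambda$ is an ordinary finite unipotent group scheme and $\epsilon_\lambda=1$. The fiber functor then lands in $\Vect$, the coherence argument simplifies since there is no super sign and $R_\epsilon=1\ot 1$, and $\mathcal{G}=\varprojlim_\lambda\mathcal{G}_\lambda$ is a unipotent affine group scheme with $\C\cong\Rep(\mathcal{G})$, as claimed.
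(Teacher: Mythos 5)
Your reduction fails at its very first step: an infinite symmetric tensor category of the kind considered here is in general \emph{not} a filtered union of finite tensor subcategories, equivalently its cotriangular coquasi-Hopf algebra $C$ is not a union of finite-dimensional sub-coquasi-Hopf algebras. The operation you invoke --- ``closing up a finite-dimensional subcoalgebra under the product'' --- typically produces an infinite-dimensional object, because the subalgebra generated by a finite-dimensional subcoalgebra need not be finite-dimensional. A concrete counterexample satisfying all hypotheses of Theorem \ref{gene} is $\C=\Rep(\mathbb{G}_a)$ in characteristic $p$: it is unipotent (so Chevalley with $\C_0=\Vect$ integral), and $C=\mathcal{O}(\mathbb{G}_a)=k[x]$ has no finite-dimensional Hopf (or coquasi-Hopf) subalgebras besides $k$, since any finite-dimensional subalgebra of a domain over an algebraically closed field is $k$. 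Correspondingly, the tensor subcategory of $\Rep(\mathbb{G}_a)$ generated by the $2$-dimensional indecomposable is already all of $\Rep(\mathbb{G}_a)$, which is not finite. So the finite pieces $\C_\lambda$ to which you want to apply Theorem \ref{classchptr} simply do not exist, and the entire torsor/Mittag--Leffler assembly never gets started. (A secondary point: even where your scheme makes sense, nonemptiness of an inverse limit of torsors over an arbitrary directed set needs the torsors to be affine schemes rather than bare sets; surjective transition maps of sets do not suffice for uncountable index sets.)

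The paper's appendix proof takes a genuinely different route precisely because of this obstruction. It first shows (Proposition \ref{622modif}) that the maximal super-Tannakian subcategory $\C_T$ of any symmetric tensor category in characteristic $\ne 2$ is a Serre subcategory; the main ingredients are the extension of quasi-tensor functors from a subcategory containing all simple objects (Proposition \ref{PropExtend}, via representability of exact functors by pro-projectives), local semisimplicity of $\C$ deduced from the vanishing of $\Ext^1$ between (sign-twisted) permutation modules of symmetric groups (Corollary \ref{CorLocSS}), and \cite[Proposition 6.2.2]{Co}. Then, since $\C_0$ is super-Tannakian by \cite[Theorem 8.1]{eov} (using integrality), $\C_T$ contains all simple objects, and being Serre it equals $\C$. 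Note also that the ``cotriangular coquasi-Hopf algebra'' extension alluded to in the appendix's opening sentence is not your exhaustion argument either: it would rerun the degree-by-degree arguments of Section 3 dually on the coradical filtration of $C$, which is exhaustive regardless of finiteness, rather than reduce to finite-dimensional sub-coquasi-Hopf algebras. If you want to salvage a reduction-style proof, you must work with the coradical filtration (or with the Serre-subcategory machinery), not with finite tensor subcategories.
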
 

This theorem also follows from the combination of \cite[Theorem 8.1]{eov}, \cite[Corollary 3.5]{Et} and \cite[Proposition 6.2.2]{Co}.

Since it is of independent interest, we will prove the following proposition, which is an improvement of \cite[Proposition 6.2.2]{Co}, and from which we will derive Theorem~\ref{gene}. Furthermore, we make the proof more self-contained by providing an alternative for the use of \cite[Corollary 3.5]{Et}. 

From now on, we let $k$ be an algebraically closed field. Recall that a symmetric tensor category over $k$ is called (super-)Tannakian if it admits a tensor functor to the category of finite dimensional (super)vector spaces over $k$. 

\begin{proposition}\label{622modif}
The following hold: 
\begin{enumerate}
\item Let $\C$ be a symmetric tensor category over $k$. Then $\C$ has a unique maximal Tannakian subcategory $\C_T^+$ and, if ${\rm char}(k)\ne 2$, a unique maximal super-Tannakian subcategory $\C_T$.  
\item
If ${\rm char}(k)\ne 2$ then $\C_T,\C_T^+$ are Serre subcategories of $\C$.
\end{enumerate} 
\end{proposition}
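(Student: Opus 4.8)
The plan is to realize $\C_T^+$ and $\C_T$ as the subcategories of objects that \emph{generate} a (super-)Tannakian subcategory, to deduce maximality and uniqueness from stability under joins, and to obtain the Serre property by cutting these loci out by an exact, functorial condition. First I would record the easy reductions: a symmetric tensor subcategory of a (super-)Tannakian category is again (super-)Tannakian, by restricting the fiber functor, so an object $X$ lies in some Tannakian (resp.\ super-Tannakian) subcategory if and only if the tensor subcategory $\langle X\rangle$ it generates is Tannakian (resp.\ super-Tannakian). I therefore set $\C_T^+:=\{X:\langle X\rangle \text{ is Tannakian}\}$ and, when ${\rm char}(k)\ne 2$, $\C_T:=\{X:\langle X\rangle \text{ is super-Tannakian}\}$. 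Since $\langle X\rangle$ is an abelian tensor subcategory, every subquotient of $X$ already lies in it, so $\langle Y\rangle\subseteq\langle X\rangle$ for any subquotient $Y$; this yields closure under subobjects, quotients, duals and direct summands essentially for free.

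For part (1) the remaining point is stability under \emph{joins}: the subcategory $\langle \D_1,\D_2\rangle$ is (super-)Tannakian whenever $\D_1,\D_2$ are, so that the filtered union of all such subcategories is the unique maximal one. I would prove this by gluing fiber functors: over the algebraically closed field $k$ a (super-)fiber functor is essentially unique, so $F_1$ and $F_2$ may be chosen to agree on the overlap $\D_1\cap\D_2$, after which a descent argument produces a single (super-)fiber functor on the join. Alternatively, one may invoke the object-local intrinsic characterization of super-Tannakian categories in \cite[Proposition 6.2.2]{Co}, which is patently stable under joins and under filtered unions; either route gives existence and uniqueness of $\C_T^+$ and $\C_T$.

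For part (2), which is the genuine improvement, the strategy is to exhibit $\C_T$ (resp.\ $\C_T^+$) as the preimage of $\sVec$ (resp.\ $\Vect$) under an exact symmetric tensor functor, or equivalently as the locus where a canonical morphism $\theta_X$, natural in $X$ and compatible with short exact sequences, is an isomorphism. Granting this, the Serre property is formal: the preimage of a Serre subcategory under an exact functor is Serre, and concretely, if $0\to A\to X\to B\to 0$ with $\theta_A,\theta_B$ isomorphisms, then $\theta_X$ is an isomorphism by the five lemma, so $X\in\C_T$. The hypothesis ${\rm char}(k)\ne 2$ enters decisively here to supply the canonical even/odd ($\mathbb{Z}/2$) splitting that separates the Tannakian locus from the super-Tannakian one and makes the parity of an extension well defined, so that $\C_T^+$ is preserved under extensions as well.

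The hard part will be constructing this functorial comparison --- equivalently, showing that the super-fiber functor on $\langle A,B\rangle\cong\Rep(\mathcal G,\epsilon)$ actually extends across a possibly nonsplit extension $X$, since a tensor subcategory generated by objects need not be closed under extensions in $\C$. This amounts to checking that the obstruction, living in $\Ext^1_{\C}(B,A)$ modulo the classes already realized inside $\Rep(\mathcal G,\epsilon)$, is killed after suitably enlarging the (super)group scheme; it is precisely this extension-closure that \cite[Proposition 6.2.2]{Co} does not supply. I expect this step to absorb most of the work, with the promised alternative to \cite[Corollary 3.5]{Et} entering to produce the fiber functor on the join and thereby to anchor an induction on the length of $X$.
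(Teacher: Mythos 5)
Your definition of $\C_T^+$ and $\C_T$ and the closure under subquotients match the paper, but both of your key steps have genuine gaps. In part (1), the join step is unsupported: ``gluing fiber functors'' via essential uniqueness plus descent is not an argument, since even if $F_1$ and $F_2$ can be made to agree on $\D_1\cap\D_2$, there is no descent mechanism that produces a tensor functor on $\langle \D_1,\D_2\rangle$, whose typical objects (subquotients of mixed tensor words in objects of $\D_1$ and $\D_2$) are not in the domain of either functor. The paper handles this with a surjective symmetric tensor functor $\C(X)\boxtimes\C(Y)\to\C(X\oplus Y)$ together with Bezrukavnikov's theorem (Lemma \ref{surt}) that a surjective symmetric tensor image of a finitely tensor-generated (super-)Tannakian category is again (super-)Tannakian, and then a result of Deligne (via \cite[A.2.3 and A.4.1]{Co}) to pass from ``every object generates a Tannakian subcategory'' to ``the subcategory is Tannakian.'' Your fallback citation of \cite[Proposition 6.2.2]{Co} here is a misattribution: that proposition is about maximal super-Tannakian subcategories of \emph{locally semisimple} categories being Serre; it is the input to part (2), not an object-local characterization usable for joins.

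In part (2), your plan presupposes exactly what must be proved: a natural transformation $\theta_X$ defined on all of $\C$ (or an exact symmetric tensor functor with $\C_T$ as a preimage) whose existence is equivalent to the theorem, and you explicitly defer the ``hard part'' --- extending the fiber functor across nonsplit extensions --- to an obstruction analysis in $\Ext^1_\C(B,A)$ that is never carried out. The paper's decisive idea, which you miss, is to \emph{weaken the structure being extended}: form the Serre closure $\C_T'$ of $\C_T$, note that $\C_T$ contains all simple objects of $\C_T'$, and extend the fiber functor only as a \emph{quasi-tensor} functor (exact, faithful, with a tensor isomorphism $J$ not required to satisfy coherence). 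This extension is soft --- Proposition \ref{PropExtend} and Lemma \ref{LemProj} obtain it from representability by projective covers in the pro-completion and Deligne tensor products --- precisely because no coherence must be preserved. The quasi-tensor functor is then used only to transport the question to modular representation theory of $S_n$: $\gr F(Y^{\otimes n})\cong V^{\otimes n}$ is a sum of (sign-twisted) permutation modules, which have no self-extensions when ${\rm char}(k)\ne 2$ (with $p=3$ in the super case checked directly), forcing $S^n(\gr Y)\cong \gr(S^nY)$, i.e., $\C_T'$ is locally semisimple; then \cite[Propositions 6.1.2 and 6.2.2]{Co} give that the maximal (super-)Tannakian subcategory of $\C_T'$ is Serre, whence $\C_T=\C_T'$. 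Note also that ${\rm char}(k)\ne 2$ enters through this $\Ext^1_{S_n}$ vanishing, not through an even/odd splitting as you suggest --- and the hypothesis is genuinely necessary, by Venkatesh's characteristic-$2$ counterexample cited in the paper.
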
 
The proposition will be proved in Subsection \ref{secproof}.

\begin{remark} Note that Proposition \ref{622modif}(2) fails for ${\rm char}(k)=2$ (for $\C_T^+$), see Remark \ref{venk} above and \cite[Subsection 1.5]{v}. 
\end{remark} 

\noindent {\bf Proof of Theorem \ref{gene}}. As explained in Section 4 above, by \cite[Theorem 8.1]{eov}, the subcategory $\C_0\subset \C$ is super-Tannakian. Thus the maximal super-Tannakian subcategory $\C_T$ of $\C$ contains all
simple objects of $\C$ and is a Serre subcategory by Proposition \ref{622modif}(ii), which implies that $\C_T=\C$, i.e., $\C$ is super-Tannakian, as claimed.

\subsection{Extending quasi-fiber functors}\label{quasifib} 
Let $\cA$ be an artinian category over $k$, see \cite[\S 1.8]{egno}. It follows easily that each simple object in $\cA$ has a projective cover in the pro-completion $\Pro\cA$. We denote by $\cP(\cA)$ the full subcategory of $\Pro\cA$ of products of such projective covers in which each cover appears only finitely many times. In this case, the product is also a categorical coproduct and therefore objects of $\cP(\cA)$ are projective. 

\begin{lemma}\label{LemProj}
The following hold:
\begin{enumerate}
\item The assignment $P\mapsto \Hom(P,-)$ yields an equivalence between $\cP(\cA)^{\op}$ and the category of $k$-linear exact functors $\cA\to\Vecc$.
\item Let $\cV$ be semisimple artinian. For a $k$-linear exact functor $F:\cA\to\cV$ we have the induced homomorphism $\Gr(F):\Gr(\cA)\to\Gr(\cV)$ between the Grothendieck groups. The map $F\mapsto \Gr(F)$ is surjective onto the set of group homomorphisms  $\Gr(\cA)\to\Gr(\cV)$ which have non-negative coefficients with respect to the bases labelled by simple objects. The fibers of the map are precisely the isomorphism classes of functors.
\item Let $\cA$ and $\cV$ be as in (2), with $\cA_0$ an abelian subcategory of $\cA$ containing all simple objects. Restriction is a dense (essentially surjective) functor from the category of $k$-linear exact functors $\cA\to\cV$ to the corresponding category of functors $\cA_0\to\cV$. Furthermore, when we consider the functor categories with same objects but only isomorphisms, this restriction functor is full.
\end{enumerate}
\end{lemma}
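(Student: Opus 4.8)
The plan is to treat (1) as the representability backbone and to deduce (2) and (3) from it together with the semisimplicity of $\cV$; the one genuinely delicate point will be the fullness assertion in (3).

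For (1), I would start from the standard fact that on a length category every $k$-linear \emph{left exact} functor $\cA\to\Vecc$ is pro-representable, giving an equivalence $\Pro(\cA)^{\op}\simeq\{k\text{-linear left exact }\cA\to\Vecc\}$. The task is then to identify which $P\in\Pro\cA$ yield \emph{exact} functors with finite-dimensional values. Exactness of $\Hom(P,-)$ is equivalent to $P$ being projective, and the projectives in $\Pro\cA$ are exactly the products $\prod_S P(S)^{n_S}$ of projective covers of the simple objects $S$ (with arbitrary multiplicities). Finally $\dim\Hom(\prod_S P(S)^{n_S},X)=\sum_S n_S[X:S]$, so finite-dimensionality of all values is equivalent to each $n_S$ being finite, i.e. to $P\in\cP(\cA)$. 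This gives the stated equivalence $P\mapsto\Hom(P,-)$.

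For (2), I would reduce to the case $\cV=\Vecc$ by postcomposing with the projections onto the simple constituents $T_j$ of the semisimple category $\cV$: an exact functor $F:\cA\to\cV$ is the same datum as a finite family of exact functors $F_j:\cA\to\Vecc$, and by (1) each $F_j$ is represented by a unique $P_j\in\cP(\cA)$. The $(j,S)$ entry of the matrix of $\Gr(F)$ in the simple bases is the multiplicity of $T_j$ in $F(S)$, equivalently the multiplicity $n_{jS}$ of $P(S)$ in $P_j$; these are non-negative integers, and conversely any non-negative matrix $(n_{jS})$ produces the $P_j$ and hence $F$, which gives surjectivity. Since by (1) the isomorphism class of $F$ is the tuple of isomorphism classes of the $P_j$, and $P_j=\prod_S P(S)^{n_{jS}}$ is determined up to isomorphism by the $n_{jS}$, the fibers of $F\mapsto\Gr(F)$ are precisely the isomorphism classes.

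For (3) I would first note that (1) and (2) hold verbatim with $\cA$ replaced by any abelian length category, in particular by $\cA_0$, whose pro-completion again has projective covers of simples. Density is then immediate: since $\cA_0$ contains all simple objects and the inclusion is exact, $\Gr(\cA_0)\to\Gr(\cA)$ is the identity on $\mathbb{Z}^{\Irr}$, so $\Gr(G)$ for a given exact $G:\cA_0\to\cV$ defines a non-negative homomorphism $\Gr(\cA)\to\Gr(\cV)$; realizing it by some $F$ via (2) for $\cA$ yields $\Gr(F|_{\cA_0})=\Gr(G)$, whence $F|_{\cA_0}\cong G$ by (2) for $\cA_0$. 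For fullness, given exact $F,F':\cA\to\cV$ and a natural isomorphism $\eta:F|_{\cA_0}\to F'|_{\cA_0}$, I would observe that $\eta$ forces $\Gr(F)=\Gr(F')$ (they agree on simples, which span $\Gr(\cA)$), so by (2) there exists \emph{some} isomorphism $\theta:F\to F'$; writing $\psi:=\eta\circ(\theta|_{\cA_0})^{-1}\in\Aut(F'|_{\cA_0})$, it then suffices to lift $\psi$ to an automorphism of $F'$ and set $\tilde\eta:=(\mathrm{lift})\circ\theta$. Thus fullness reduces to surjectivity of the restriction map $\Aut(F')\to\Aut(F'|_{\cA_0})$, which via (1) is surjectivity on unit groups of the restriction homomorphism $\End(P)\to\End(Q)$ between the endomorphism algebras of the representing pro-projectives in $\cP(\cA)$ and $\cP(\cA_0)$; the units lift once one checks that the kernel of this homomorphism lies in the Jacobson radical, both algebras having the same semisimple quotient $\prod_S\End(S)$ indexed by the common simples. \emph{Verifying this radical containment — equivalently, that restriction to $\cA_0$ induces an isomorphism on the semisimple quotients of these endomorphism algebras — is the step I expect to be the main obstacle,} since it is exactly where the hypothesis that $\cA_0$ contains all simple objects must be used essentially.
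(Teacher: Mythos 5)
Your proposal is correct, and for parts (1) and (2) it follows the paper's own line (representability of exact functors by objects of $\cP(\cA)$ plus Yoneda; reduction of (2) to $\cV=\Vecc$ by multiplicity counting), just with more of the standard background spelled out. For part (3) you take a somewhat different route. The paper works directly with representing objects: under the equivalence of (1), restriction of functors corresponds to the assignment $P\mapsto P_0$, where $P_0$ is the maximal quotient of $P$ lying in $\Pro\cA_0$; since $\cA_0$ contains all simple objects, the head of $P$ lies in $\Pro\cA_0$, so $P$ is the projective cover of $P_0$, and both density and fullness-on-isomorphisms are read off from the defining properties of projective objects (every $Q\in\cP(\cA_0)$ arises as $P_0$ for its projective cover $P$ in $\Pro\cA$, and an isomorphism $P_0\stackrel{\sim}{\to}P_0'$ lifts along the covers to an isomorphism $P\stackrel{\sim}{\to}P'$). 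You instead obtain density by Grothendieck-group bookkeeping via (2) — legitimate, since (1) and (2) do hold for $\cA_0$ and the simples of $\cA_0$ coincide with those of $\cA$ ($\cA_0$ being a full abelian subcategory containing all simples) — and fullness by correcting an arbitrary isomorphism $\theta\colon F\to F'$ supplied by (2) with a lifted automorphism, reducing to unit-lifting along $\End(P)\to\End(P_0)$. That reduction is sound, and the step you flag as the main obstacle does go through, for exactly the reason underlying the paper's one-line claim: since all simples lie in $\cA_0$, the head of $P$ is a quotient of $P_0$, so an endomorphism of $P$ killed by restriction acts by zero on the head and hence lies in $\Rad\,\End(P)$; combined with surjectivity of $\End(P)\to\End(P_0)$ — which is immediate from projectivity of $P$ but which you should state explicitly, since unit-lifting needs a preimage to start from — this lifts units. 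In short, your fullness argument is an algebraic unwinding, via endomorphism rings and Jacobson radicals, of the paper's categorical statement that $P\to P_0$ is a projective cover; the paper's version is shorter, while yours makes the mechanism and the precise use of the hypothesis explicit. Two minor inaccuracies to fix: in (2), if $\cV$ has infinitely many simples the family $(F_j)$ is locally finite rather than finite; and the common semisimple quotient is $\prod_S\Mat_{n_S}(k)$ (the multiplicities enter), not $\prod_S\End(S)$.
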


\begin{proof}
Consider an exact linear functor $\cA\to\Vecc$.
It follows from the artinian nature of $\cA$ that the functor is representable by an object in $\cP(\cA)$. Part (1) thus follows from the Yoneda lemma. 

We prove part (2) for the special case $\cV=\Vecc$, the general case then follows from this. By Part (1) it suffices to observe that $P\mapsto \dim\Hom(P,-)$ can be interpreted as a bijection between the set of isomorphism classes of objects in $\cP(\cA)$ and maps from the set of simple objects in $\cC$ to $\mathbb{Z}_+$.

Finally we prove part (3). For $P$ in $\cP(\cA)$ we denote by $P_0$ its maximal quotient contained in $\Pro\cA_0$. In particular, $P$ is the projective cover of $P_0$ in $\Pro\cA$. Restriction of exact linear functors from $\cA$ to $\cA_0$ corresponds, using the equivalence in part (1), to mapping $\Hom(P,-)$ to $\Hom(P_0,-)$. It follows from the defining properties of projective objects that the functor $P\mapsto P_0$ is full and dense between the respective categories of projective objects and isomorphisms.
\end{proof}

For functors $F:\cC\to\cC'$ between tensor categories, we will be interested in isomorphisms
\begin{equation}\label{eqJ}J_{XY}: F(X)\otimes F(Y)\stackrel{\sim}{\to} F(X\otimes Y),\qquad\mbox{for all $X,Y\in\cC$,}
\end{equation}
natural in both variables.
Following \cite[Definition~5.1.1]{egno}, for tensor categories $\cC,\cV$ with $\cV$ semisimple, a quasi-tensor functor $F:\cC\to\cV$ is a $k$-linear, exact and faithful functor with $F(\mathbf{1})\cong\mathbf{1}$, equipped with $J$ as in \eqref{eqJ}. 

\begin{proposition}
\label{PropExtend}
Consider a tensor category $\cC$ with a tensor subcategory $\cC_0$ containing all simple objects of $\cC$, and a semisimple tensor category $\cV$. Then each quasi-tensor functor $(F_0,J_0)$ from $\cC_0$ to $\cV$ extends to a quasi-tensor functor $(F,J)$ from $\cC$ to $\cV$.
\end{proposition}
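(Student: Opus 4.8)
The plan is to separate the problem into two tasks: first extend the underlying functor $F_0$ to an exact functor $F$ on all of $\cC$, and then extend the tensor constraint $J_0$. It is worth stressing at the outset that, in the sense of \cite[Definition~5.1.1]{egno} recalled above, a quasi-tensor functor carries \emph{no} coherence (associativity) condition on $J$; the isomorphisms \eqref{eqJ} are required only to be natural. Hence the second task reduces purely to extending a natural isomorphism of bifunctors from $\cC_0\times\cC_0$ to $\cC\times\cC$, which is what makes the extension feasible.

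For the functor, recall that a quasi-tensor functor is in particular $k$-linear and exact, so $F_0\colon\cC_0\to\cV$ is an object of the category of exact functors to which Lemma \ref{LemProj}(3) applies. By the density clause there, $F_0$ extends to an exact functor $F\colon\cC\to\cV$ with $F|_{\cC_0}\cong F_0$, and after adjusting by this isomorphism I may assume $F|_{\cC_0}=F_0$ on the nose. Two required properties then transfer for free: since $\mathbf{1}\in\cC_0$ we get $F(\mathbf{1})=F_0(\mathbf{1})\cong\mathbf{1}$, and since every simple object of $\cC$ lies in $\cC_0$ while $F_0$ is faithful, $F$ is nonzero on every simple object and hence, being exact, faithful. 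Thus $F$ is a legitimate underlying functor for a quasi-tensor functor, and only $J$ remains.

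To produce $J$ I would extend $J_0$ one variable at a time, in two phases: first from $\cC_0\times\cC_0$ to $\cC\times\cC_0$, then from $\cC\times\cC_0$ to $\cC\times\cC$. In each phase one freezes an object $W$ in the second slot and considers the two functors $X\mapsto F(X)\otimes F(W)$ and $X\mapsto F(X\otimes W)$ of the remaining variable; both are exact, since $F$, the tensor product of $\cC$, and the (exact) tensor product of the semisimple category $\cV$ are each exact in one argument, and the available data ($J_0$ in the first phase, the output of the first phase in the second) gives a natural isomorphism between their restrictions to $\cC_0$. Each phase is then a genuine instance of Lemma \ref{LemProj}(3) with $(\cA,\cA_0)=(\cC,\cC_0)$: its fullness clause, formulated on the groupoids of exact functors and natural isomorphisms, extends this to a natural isomorphism of the unrestricted functors, which is automatically an isomorphism and by construction restricts to the given data.

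The main obstacle is naturality in the frozen variable $W$: Lemma \ref{LemProj}(3) yields an extension for each individual $W$, whereas a quasi-tensor structure needs these to assemble into a transformation natural in $W$ as well. I would resolve this not by choosing the extensions pointwise in $W$, but by carrying out the extension at the level of representing objects via Lemma \ref{LemProj}(1). Writing $F=\Hom(P,-)$ componentwise over the simple objects of $\cV$ with $P\in\cP(\cC)$, and letting $P_0$ be the canonical quotient of $P$ in $\Pro\cC_0$ from the proof of Lemma \ref{LemProj}(3), the two frozen functors of the free variable are representable by objects of $\cP(\cC)$ that depend functorially on $W$ (for instance $X\mapsto F(X\otimes W)$ is represented by $P\otimes W^{*}$, using rigidity). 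Since the assignment $P\mapsto P_0$ is itself functorial and $P\twoheadrightarrow P_0$ with $P$ projective, translating $J_0$ into an isomorphism of representing objects over $\cC_0$ and lifting the structure map along this projective cover produces an extension that is natural in both variables at once; equivalently, one may extend $F$ to $\Pro\cC$, apply the single-variable extension to a projective generator of $\cC$ placed in the frozen slot, and deduce naturality for all $W$ by a Yoneda argument. Gluing the two phases then gives the desired natural isomorphism $J$ on $\cC\times\cC$, completing the construction of the quasi-tensor functor $(F,J)$.
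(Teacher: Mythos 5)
Your first step (extending the underlying functor $F$ via Lemma \ref{LemProj}(3), with faithfulness inherited from exactness plus the fact that all simples lie in $\cC_0$) is exactly the paper's. But your construction of $J$ has a genuine gap, and you have located it yourself: naturality in the frozen variable $W$. Lemma \ref{LemProj}(3) gives, for each fixed $W$, \emph{some} lift of the isomorphism $J_0(-,W)$, but such lifts are badly non-unique, because the restriction functor $\mathrm{Fun}^{\mathrm{ex}}(\cC,\cV)\to\mathrm{Fun}^{\mathrm{ex}}(\cC_0,\cV)$ is full on isomorphisms without being faithful. Concretely, take $\cC=\Rep(\alpha_p)$, $\cC_0=\Vecc$ its semisimple part, and $F$ the forgetful functor: the action of the primitive generator $t\in k\alpha_p$ is a nonzero natural endomorphism of $F$ that restricts to zero on $\cC_0$. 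So two lifts of the same restricted isomorphism can differ by a nontrivial natural transformation, and nothing forces the pointwise choices $J(-,W)$ to fit into a family natural in $W$. Your proposed repair does not close this: translating into representing objects via Lemma \ref{LemProj}(1) turns the problem into lifting a natural isomorphism $(R_W)_0\cong (Q_W)_0$ of functors of $W$ along the projective covers $Q_W\twoheadrightarrow (Q_W)_0$. While $P\mapsto P_0$ is functorial and each individual isomorphism lifts by projectivity, projective covers are unique only up to \emph{non-canonical} isomorphism, so there is no canonical lift, and you give no argument that compatible lifts can be chosen for all morphisms $W\to W'$ simultaneously; the ``Yoneda argument'' with a pro-generator in the frozen slot faces the identical obstruction, now phrased as naturality with respect to the endomorphisms of the generator. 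In short, you are implicitly asserting a bifunctor version of the fullness clause of Lemma \ref{LemProj}(3), which does not follow formally from the one-variable statement.

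The paper resolves precisely this point by a different device: Deligne's tensor product. Since biexact bilinear bifunctors $\cC\times\cC\to\cV$ correspond to exact linear functors $\cC\boxtimes\cC\to\cV$, the datum $J$ is a natural isomorphism between two single-variable functors on $\cC\boxtimes\cC$, and $\cC_0\boxtimes\cC_0$ contains all simple objects of $\cC\boxtimes\cC$ (simples there are $X\boxtimes Y$ with $X,Y$ simple). One application of the fullness clause of Lemma \ref{LemProj}(3) to the pair $(\cC\boxtimes\cC,\cC_0\boxtimes\cC_0)$ then lifts $J_0$ to $J$ in a single stroke, with naturality in both variables built in from the start. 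If you want to salvage your two-phase plan, you would need to prove the bifunctor fullness statement directly --- which essentially amounts to reconstructing the Deligne-product argument --- so the cleaner course is to package both variables at once as the paper does.
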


\begin{proof}
That $F_0$ extends to a linear exact functor $F$ follows from Lemma~\ref{LemProj}(3). Faithfulness and $F(\mathbf{1})\cong\mathbf{1}$ are automatically inherited from $F_0$.

Now we will use Deligne's tensor product $\cA\boxtimes\cA'$ of artinian categories $\cA$ and $\cA'$, see e.g. \cite[\S 1.11]{egno}. By construction, $\cA\boxtimes\cA'$ is again an artinian category and we have a bilinear bifunctor $\cA\times\cA'\to\cA\boxtimes\cA'$ which induces equivalences between the category of exact $k$-linear functors $\cA\boxtimes\cA'\to\cV$ and the category of biexact bilinear bifunctors $\cA\times\cA'\to\cV$. It thus follows that we can identify $J$ as in \eqref{eqJ} with a natural isomorphism between the two functors $\cC\boxtimes\cC\to \cV$ which correspond to
$$\cC\times\cC\xrightarrow{F\times F}\cV\times\cV\xrightarrow{\otimes}\cV\quad\mbox{and}\quad \cC\times\cC\xrightarrow{\otimes}\cC\xrightarrow{F}\cV.$$
Applying this to $J_0$ yields an isomorphism between the two functors on $\cC_0\boxtimes\cC_0$ induced from $F_0$. By construction, both functors extend to functors on $\cC\boxtimes\cC$ which are induced from $F$. That the isomorphism corresponding to $J_0$ lifts to the functors on $\cC\boxtimes\cC$ follows again from Lemma~\ref{LemProj}(3).
\end{proof}

\begin{remark}Consider tensor categories $\cC,\cV$, with $\cV$ semisimple. For an exact functor $F:\cC\to\cV$ there exists $J$ as in \eqref{eqJ} if and only if $\Gr(F):\Gr(\cC)\to\Gr(\cV)$ is a ring homomorphism. This can be proved using the techniques from the proof of Proposition~\ref{PropExtend}. 
\end{remark}

Recall the notion of locally semisimple tensor categories from \cite[Section 3]{Co}.

\begin{corollary}\label{CorLocSS}
Assume char$(k)\not=2$ and let $\cC$ be a tensor category over $k$ with a Tannakian or super-Tannakian subcategory $\cC_0$ which contains all simple objects. Then $\cC$ is locally semisimple.
\end{corollary}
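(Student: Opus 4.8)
The plan is to manufacture a quasi-tensor functor from all of $\cC$ to a semisimple tensor category by extending a fiber functor that is initially defined only on $\cC_0$, and then to read off local semisimplicity from the definition in \cite[Section 3]{Co}. First I would record that, by hypothesis, $\cC_0$ is Tannakian or super-Tannakian, so by definition it admits a symmetric tensor functor $F_0$ to $\Vecc$ or to $\sVec$; write $\cV$ for whichever of these is the target. Since $\cV$ is semisimple and $F_0$ is in particular $k$-linear, exact, faithful, sends $\mathbf 1$ to $\mathbf 1$, and carries a coherent natural isomorphism $J_0$ as in \eqref{eqJ}, the pair $(F_0,J_0)$ is a quasi-tensor functor $\cC_0\to\cV$ in the sense of \cite[Definition~5.1.1]{egno} used above.

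The key step is then a direct application of Proposition \ref{PropExtend}. Because $\cC_0$ is a tensor subcategory of $\cC$ containing all the simple objects of $\cC$, and $\cV$ is semisimple, that proposition extends $(F_0,J_0)$ to a quasi-tensor functor $(F,J)$ from $\cC$ to $\cV$. Thus $\cC$ admits a quasi-tensor functor to a semisimple tensor category, which is exactly the property defining a locally semisimple tensor category in \cite[Section 3]{Co}; this gives the claim.

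The role of the hypothesis $\mathrm{char}(k)\ne 2$ is to legitimize the super-Tannakian case, guaranteeing that $\sVec$ is a semisimple symmetric tensor category with its standard structure and may therefore serve as the target $\cV$ (in the Tannakian case $\cV=\Vecc$ and no characteristic assumption is needed for this step). I do not expect a serious obstacle: Proposition \ref{PropExtend} does all the real work. The only points requiring care are bookkeeping ones, namely verifying that a symmetric fiber functor genuinely is a quasi-tensor functor in the precise sense invoked, and observing that the extended $F$ — which need not respect the braiding or the associativity constraint, being merely equipped with $J$ — still suffices to witness local semisimplicity as that notion is formulated in \cite{Co}, since only a quasi-tensor functor to a semisimple category, not a full symmetric tensor functor, is demanded there.
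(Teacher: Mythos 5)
Your argument stops exactly where the real proof begins, because it rests on a misreading of the target notion. In \cite{Co}, ``locally semisimple'' is \emph{not} defined as ``admits a quasi-tensor functor to a semisimple tensor category''; it is a condition on the \emph{symmetric} structure, amounting to the requirement that for every object $Y$ (with its Loewy filtration) the canonical epimorphism $S^n(\gr Y)\twoheadrightarrow \gr(S^nY)$ between symmetric powers is an isomorphism. A quasi-tensor functor $(F,J)$ carries no compatibility with the braiding whatsoever, so its mere existence cannot control symmetric powers. A concrete red flag: in characteristic $2$ the category $\Rep(\alpha_2)$ with the exotic symmetric structure of Remark \ref{venk} still admits its forgetful functor to $\Vecc$ (a tensor functor, a fortiori quasi-tensor to a semisimple category), yet this is exactly the setting where the conclusions of this appendix fail; indeed your argument never genuinely uses ${\rm char}(k)\ne 2$, since $\sVec$ is a perfectly good semisimple category wherever it is defined, whereas the statement of Corollary \ref{CorLocSS} is false in characteristic $2$.

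The paper does use Proposition \ref{PropExtend} just as you do, but only as the first step. The substance comes after: given the extended $(F,J)$ with $\cV=\Vecc$ or $\sVec$, one takes $Y\in\cC$, sets $M:=F(Y^{\otimes n})$, and observes that $M$ is a filtered $k[S_n]$-module via $S_n\to\End(Y^{\otimes n})\to\End_k(M)$ --- this is how the braiding of $\cC$ is fed into the argument despite $F$ not being symmetric. Since $F_0$ \emph{is} symmetric monoidal on $\cC_0$ and $\gr Y\in\cC_0$, one gets $\gr M\cong V^{\otimes n}$ with $V:=F(\gr Y)$, a direct sum of permutation modules (sign-twisted ones in the super case). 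By Shapiro's lemma and Mackey's theorem these have vanishing first self-extensions precisely when ${\rm char}(k)\ne 2$ --- this is where the characteristic hypothesis actually enters, not in the semisimplicity of $\sVec$ --- so $M\cong\gr M$, and comparing dimensions of coinvariants forces $S^n(\gr Y)\twoheadrightarrow\gr(S^nY)$ to be an isomorphism. The case $p=3$ of the super-Tannakian situation needs a separate direct computation of $\Ext^1_{S_3}(V^{\otimes 3},V^{\otimes 3})=0$ together with \cite[Theorem~C]{Co} to conclude from the $n=3$ case alone. All of this $S_n$-representation-theoretic work, which is the entire content of the corollary beyond Proposition \ref{PropExtend}, is missing from your proposal.
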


\begin{proof}
Consider the setting of Proposition~\ref{PropExtend} with $\cV=\Vecc$ or $\cV=\sVec$.
It suffices to show that when $(F_0,J_0)$ is a tensor functor, and hence a (super) fiber functor, $\cC$ is locally semisimple. Consider an arbitrary object $Y$ in $\cC$ with a Loewy filtration. Hence $\gr Y$ is in $\cC_0$. It suffices to show that the canonical epimorphism $S^n(\gr Y)\twoheadrightarrow \gr (S^nY)$ between symmetric powers is always an isomorphism.

First assume that $\cC_0$ is Tannakian. Assume the contrary, i.e., that there exists $Y\in\cC$ which does not yield an isomorphism. Since $F:\cC\to\Vecc$ from Proposition~\ref{PropExtend} is faithful, this means that
$$H_0(S_n,\gr M)\twoheadrightarrow \gr H_0(S_n, M),\qquad \mbox{with $M:=F(Y^{\otimes n})$},$$ 
is not an isomorphism. Here $M$ is a (filtered) $k[S_n]$-module through $$S_n\to\End(Y^{\otimes n})\to \End_k(M)$$ and, since $F_0$ is symmetric monoidal, the graded module $\gr M$ is isomorphic to $V^{\otimes n}$, for the vector space $V:=F(\gr Y)$. Consequently $\gr M$ is a direct sum of permutation modules. From Shapiro's lemma and Mackey's theorem, it follows that there are no first extensions between permutation modules when char$(k)\not=2$. Hence $\Ext^1_{S_n}(\gr M,\gr M)=0$, which means that $\gr M\cong M$. The latter isomorphism contradicts the fact that the dimension of their spaces of coinvariants would differ.

Now assume that $\cC_0$ is super-Tannakian. We can then proceed as in the previous paragraph to find a filtered $S_n$-representation $M$ in $\sVec$. Now $\gr M$ will be of the form $V^{\otimes n}$ for a super vector space $V$. Hence $M$ is a direct sum of permutation modules and sign-twisted permutation modules. If char$(k)\not\in\{2,3\}$ it follows as before that such representations have no self-extensions and therefore $\gr M\cong M$. The conclusion follows as in the previous paragraph.

In case $p=3$, one can calculate directly that $\Ext^1_{S_3}(V^{\otimes 3}, V^{\otimes 3})=0$, for any $V\in \sVec$, where the extension is taken in the category of super representations of $S_3$. The latter decomposes into the direct sum of two copies of the ordinary category $\Rep (S_3)$. We can therefore conclude as above that $S^3\gr Y\twoheadrightarrow \gr S^3Y$ is always an isomorphism. That this is sufficient for $\cC$ to be locally semisimple follows from \cite[Theorem~C]{Co}.
\end{proof}

\subsection{Proof of Proposition \ref{622modif}}\label{secproof}

We start with the following lemma, which appears in the Tannakian case as \cite[Proposition 1]{B} and can be proved similarly in the super-Tannakian case. 

\begin{lemma}\label{surt} Let $\D,\E$ be symmetric tensor categories over $k$ and $F: \D\to \E$ a surjective symmetric tensor functor (i.e., every object of $\E$ is a subquotient of $F(X)$ for some $X\in \D$). If $\D$ is finitely tensor-generated and (super-)Tannakian, then so is $\E$. \qed
\end{lemma}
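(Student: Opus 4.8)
The plan is to prove Lemma~\ref{surt} by reducing the statement to a concrete statement about (super)group schemes, using the Tannakian reconstruction dictionary. Recall that a (super-)Tannakian category equipped with a (super-)fiber functor is equivalent to the category $\Rep(\G)$ of representations of an affine (super)group scheme $\G$, namely the automorphism group scheme of the fiber functor. So the first step is to fix a (super-)fiber functor $\omega_\D$ on $\D$, giving $\D\simeq \Rep(\G_\D)$ with $\G_\D=\underline{\Aut}^\otimes(\omega_\D)$, and to transport $\omega_\D$ across $F$.

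The key observation is that since $F$ is a surjective symmetric tensor functor, one can use it to produce a fiber functor on $\E$. More precisely, I would argue as follows. Because $\E$ is generated as a tensor category by the image of $F$ (every object of $\E$ is a subquotient of some $F(X)$), it suffices to build a symmetric tensor functor $\E\to\Vect$ (or $\E\to\sVec$) and check it is faithful and exact; surjectivity of $F$ then forces this functor to be a fiber functor on all of $\E$. The natural candidate is obtained by the universal property: a surjective tensor functor $F:\D\to\E$ corresponds to a closed immersion of (super)group schemes $\G_\E\hookrightarrow \G_\D$, where $\G_\E$ is the automorphism group scheme of the sought-after fiber functor on $\E$. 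Concretely, one sets $\omega_\E$ to be the functor whose composition with $F$ is $\omega_\D$; such a functor exists and is unique up to isomorphism precisely because $F$ is surjective, and it is automatically symmetric, exact and faithful.

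The hypothesis that $\D$ is finitely tensor-generated is what guarantees that $\G_\D$, and hence the closed subgroup(scheme) $\G_\E$, is of finite type, i.e.\ that $\E$ is again \emph{finitely} tensor-generated: if $X_1,\dots,X_m$ tensor-generate $\D$, then $F(X_1),\dots,F(X_m)$ tensor-generate $\E$ by surjectivity. Thus $\E\simeq\Rep(\G_\E)$ for an affine (super)group scheme of finite type $\G_\E$, which is exactly the assertion that $\E$ is (super-)Tannakian and finitely tensor-generated.

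The main obstacle, and the only genuinely delicate point, is the construction of the descended fiber functor $\omega_\E$ on $\E$ in the super-Tannakian case: one must verify that $\omega_\D$ does descend along $F$ to a well-defined symmetric tensor functor on $\E$, and that it remains faithful and exact. In the Tannakian case this is \cite[Proposition~1]{B}; the super case is handled by the same argument once one checks that the super-sign conventions (the parity grading and the associated symmetry constraint governed by $R_\epsilon$) are preserved under descent, which they are since $F$ is symmetric and the supergroup-scheme dictionary is functorial. I would therefore spell out that the argument of \cite[Proposition~1]{B} goes through verbatim, with $\Vect$ replaced by $\sVec$ throughout and with the automorphism (super)group scheme replaced by its super analogue, and note that no new phenomena arise because $\mathrm{char}(k)\ne 2$ makes the splitting into even and odd parts available.
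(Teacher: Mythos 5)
Your proposal ends where the paper's proof begins and ends: the paper disposes of this lemma entirely by citing \cite[Proposition 1]{B} for the Tannakian case and asserting that the super-Tannakian case is proved similarly, which is what your last paragraph does. The problem is the middle of your argument, which is genuinely circular. The claim that a functor $\omega_{\E}$ with $\omega_{\E}\circ F\cong \omega_{\D}$ ``exists and is unique up to isomorphism precisely because $F$ is surjective'' is false as a formal statement: surjectivity of $F$ gives no descent mechanism for fiber functors, and your justification via ``a surjective tensor functor corresponds to a closed immersion $G_{\E}\hookrightarrow G_{\D}$'' presupposes that $\E$ is already (super-)Tannakian, i.e., presupposes the conclusion of the lemma. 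The existence of $\omega_{\E}$ \emph{is} the content of \cite[Proposition 1]{B}, and its proof there is a genuine construction (roughly: push the regular ind-algebra $\mathcal{O}(G_{\D})$ into $\E$ along $F$ and produce a fiber functor from modules over a suitable simple quotient algebra), not a formal factorization. This is also where the hypothesis that $\D$ is finitely tensor-generated does its real work --- it is needed inside that construction, with $G_{\D}$ algebraic --- whereas in your write-up finite generation only appears in the easy observation that $F(X_1),\dots,F(X_m)$ tensor-generate $\E$, which is not where the weight of the hypothesis lies. (Your uniqueness claim for the factorization is likewise unjustified, though it is not needed.)

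Two smaller points of citation hygiene, both of which the paper makes explicit and you omit: \cite[Proposition 1]{B} is stated for a \emph{central} functor to a not-necessarily-symmetric target, so to invoke it one must note that a symmetric monoidal functor is automatically central; and Lemma 3 of \cite{B}, which is needed there because the target is not assumed rigid, is unnecessary here since $\E$ is a tensor category and hence rigid. If you delete the circular middle paragraph and instead state that the Tannakian case is exactly \cite[Proposition 1]{B} (applicable since symmetric implies central), with the super case obtained by running the same construction in $\sVec$ (using ${\rm char}(k)\ne 2$), your proof matches the paper's.
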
  

\begin{remark} \cite[Proposition 1]{B} addresses a more general case when $\E$ and $F$ are not necessarily symmetric, 
and assumes that $F$ is a central functor. It applies to our situation because a symmetric monoidal functor is automatically central. 
Also, in our setting we do not need \cite[Lemma 3]{B}, which is needed in \cite[Proposition 1]{B} because it is not assumed there that $\E$ is rigid. 
\end{remark}  

(1) Let $\C_T^+\subset \C$ be the full subcategory of $X\in \C$ which tensor generate a Tannakian subcategory $\C(X)\subset \C$. By definition, $\C_T^+$ is closed under subquotients. Also, if $X,Y\in \C_T^+$ then we have a surjective symmetric tensor functor \linebreak $\C(X)\boxtimes \C(Y)\to \C(X\oplus Y)$, where $\boxtimes$ is the Deligne tensor product. Hence by Lemma \ref{surt}, $\C(X\oplus Y)$ is Tannakian, i.e., $X\oplus Y\in \C_T^+$. Hence we also have $X\otimes Y\in \C_T^+$ (as 
$X\otimes Y$ is a direct summand in $(X\oplus Y)^{\otimes 2}$). Thus, $\C_T^+$ is a tensor subcategory of $\C$, in which every object generates a Tannakian subcategory. Hence by a result of Deligne (see \cite[A.2.3 and A.4.1]{Co}), $\C_T^+$ is Tannakian. By definition, 
$\C_T^+$ contains every Tannakian subcategory of $\C$, so $\C_T^+$ is the unique maximal Tannakian subcategory of $\C$.

Similarly, for ${\rm char}(k)\ne 2$ the full subcategory $\C_T\subset \C$ of $X\in \C$ which tensor generate a super-Tannakian subcategory $\C(X)\subset \C$ is the unique maximal super-Tannakian subcategory of $\C$. 

(2) Let $\C_T'$ be the smallest Serre subcategory of $\C$ containing $\C_T$. By Corollary~\ref{CorLocSS}, the tensor category $\C_T'$ is locally semisimple. Hence by \cite[Proposition 6.2.2]{Co}, the maximal super-Tannakian subcategory of $\C_T'$ (which is clearly $\C_T$) is a Serre subcategory and hence $\C_T=\C_T'$.

The proof that $\C_T^+$ is a Serre subcategory of $\C$ is similar, using \cite[Proposition 6.1.2]{Co}.  

\vskip .05in

{\bf Acknowledgements.} P. E. is grateful to S. Gelaki and V. Ostrik for useful discussions.

\end{document}